\newtheorem{theorem}{Theorem}[section]
\newtheorem{proposition}[theorem]{Proposition}
\newtheorem{lemma}[theorem]{Lemma}
\newtheorem{corollary}[theorem]{Corollary}
\theoremstyle{definition}
\newtheorem{definition}[theorem]{Definition}
\newtheorem{example}[theorem]{Example}
\newtheorem{remark}[theorem]{Remark}
\renewcommand{\phi}{\varphi}
\renewcommand{\i}{\mspace{1.5mu}\mathbf i}
\renewcommand{\j}{\mspace{1.5mu}\mathbf j}
\renewcommand{\k}{\mspace{1.5mu}\mathbf k}
\newcommand{\n}{\mathbf n}
\renewcommand{\v}{\mathbf v}
\newcommand{\lk}{\operatorname{lk}}
\newcommand{\tr}{\operatorname{tr}}
\newcommand{\sign}{\operatorname{sign}}
\renewcommand{\Re}{\operatorname{Re}}
\renewcommand{\Im}{\operatorname{Im}}
\newcommand{\Hom}{\operatorname{Hom}}
\newcommand{\SU}{\operatorname{SU}}
\newcommand{\SO}{\operatorname{SO}}
\newcommand{\Sp}{\operatorname{Sp}}
\thanks{Both authors were partially supported by the NSF Grant DMS-1952762}
\subjclass{57K10, 57K31}
\author[Zedan Liu]{Zedan Liu}
\author[Nikolai Saveliev]{Nikolai Saveliev}
\address{Department of Mathematics \newline\indent University of Miami, PO Box 249085 \newline\indent Coral Gables, FL 33124}
\email{\rm{zedan.liu@miami.edu}}
\email{\rm{saveliev@math.miami.edu}}
\begin{document}
\begin{abstract}
The B{\'e}nard--Conway invariant of links in the 3-sphere is a Casson--Lin type invariant defined by counting irreducible SU(2) representations of the link group with fixed meridional traces. For two-component links with linking number one, the invariant has been shown to equal a symmetrized multivariable link signature. We extend this result to all two-component links with non-zero linking number. A key ingredient in the proof is an explicit calculation of the B{\'e}nard--Conway invariant for $(2,2\ell)$--torus links with the help of the Chebyshev polynomials.
\end{abstract}
\title{The Bénard--Conway invariant of two-component links}

\maketitle

\begin{sloppypar}

\section{Introduction}
	
The practice of defining invariants of links in $3$-manifolds using unitary representations of the link group has a long history. Xiao-Song Lin \cite{Lin} defined an invariant of knots in $S^3$ by counting irreducible $\SU(2)$ representations of the knot group sending the meridians to zero-trace matrices. Herald \cite{Herald} and Heusener and Kroll \cite{Heusener-Kroll} extended this construction by allowing matrices of a fixed trace which is not necessarily zero. The construction was further extended to links of more than one component by Harper and Saveliev \cite{Harper-Saveliev} and Boden and Harper \cite{Boden-Harper} by counting projective unitary representations. These invariants are closely related to gauge theory: for example, Floer homology theories of Daemi--Scaduto \cite{Daemi-Scaduto} and Kronheimer--Mrowka \cite{Kronheimer-Mrowka} can be viewed as categorifying the invariants of Lin and Harper--Saveliev, respectively. {\par}

The latest in this line of link invariants is the multivariable Casson--Lin type invariant $h_L$ of Bénard and Conway \cite{Benard-Conway}. It is defined for colored links $L \subset S^3$ by counting irreducible $\SU(2)$ representations of the link group sending the meridians to matrices of a fixed trace away from the roots of the multivariable Alexander polynomial. While this invariant is defined for links $L$ of any number of components, it is best studied for two-component links. In particular, for an oriented ordered link $L = L_1 \cup L_2 \subset S^3$ with linking number $\lk(L_1, L_2) = 1$, Bénard and Conway \cite[Theorem 1.1]{Benard-Conway} identify $h_L$ with a symmetrized multi\-variable link signature $\sigma_L$ of Cimasoni and Florens \cite{Cimasoni-Florens}. 
	
The purpose of this paper is to extend the result of Bénard and Conway to arbitrary oriented ordered links $L_1 \cup L_2 \subset S^3$ of two components with linking number $\lk(L_1, L_2) \neq 0$. To be precise, we prove the following theorem. 
	
\begin{theorem}\thlabel{thm:main-theorem-introduction}
Let $L = L_1 \cup L_2 \subset S^3$ be a two-component oriented ordered link with $\lk(L_1, L_2) \neq 0$ and, for any choice of $(\alpha_1, \alpha_2) \in (0, \pi)^2$, denote $(\omega_1, \omega_2) = (e^{2 \i \alpha_1}, e^{2 \i \alpha_2})$. If the multivariable Alexander polynomial of $L$ satisfies $\Delta_L (\omega_1^{\epsilon_1}, \omega_2^{\epsilon_2}) \neq 0$ for all possible $\epsilon_1,\epsilon_2 = \pm 1$ then
\begin{equation}\label{E:main}
h_L(\alpha_1, \alpha_2) \; = \; - \frac{1}{2} \left(\sigma_L \left( \omega_1, \omega_2 \right) + \sigma_L \left( \omega_1, \omega^{- 1}_2 \right) \right).
\end{equation}
\end{theorem}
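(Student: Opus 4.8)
The plan is to reduce the general nonzero-linking-number case to the linking-number-one case of Bénard--Conway by an induction on $|\lk(L_1,L_2)|$, using the $(2,2\ell)$--torus links as the computational engine that fixes the inductive increment. The basic geometric move is the insertion of a full twist into a two-strand band meeting $L_1$ and $L_2$ once each; this changes the linking number by $\pm 1$ and interpolates, starting from a linking-number-one link, through every value of the linking number of a given sign. I would prove that both sides of \eqref{E:main} transform by the \emph{same} universal increment under this move, an increment depending only on the inserted twist and on the trace parameters $(\alpha_1,\alpha_2)$, and then read off that increment from the torus-link computation, whose $h$--invariant is evaluated via the Chebyshev polynomials and whose Cimasoni--Florens signature is classically known.

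For the Bénard--Conway side I would present $L$ as the closure of a tangle containing the twist band and realize $h_L(\alpha_1,\alpha_2)$, in the spirit of Lin's original construction, as a signed count of the intersection of two Lagrangians in the $\SU(2)$ character variety of the punctured sphere, cut out by the fixed meridional traces $\alpha_1,\alpha_2$. Inserting a full twist acts on this character variety by an explicit symplectomorphism supported near the band, so the change in the intersection number is local and independent of the remainder of $L$; the hypothesis $\Delta_L(\omega_1^{\epsilon_1},\omega_2^{\epsilon_2})\neq 0$ guarantees the absence of reducible representations on the relevant trace level set, hence transversality and local constancy of the count. The closure of the pure full-twist braid on two strands is exactly $T(2,2\ell)$, so the explicit count for these links pins down both the magnitude and the sign of the increment.

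For the signature side I would work with a C-complex for $L$ adapted to the band and with the associated generalized Seifert matrices. A full twist alters the generalized Seifert form by a single controlled block, and a direct diagonalization computes the resulting jump of $\sigma_L(\omega_1,\omega_2)$, and likewise of $\sigma_L(\omega_1,\omega_2^{-1})$, as a function of $(\omega_1,\omega_2)$; symmetrizing as on the right-hand side of \eqref{E:main} yields a clean expression. Evaluating this jump on $T(2,2\ell)$ via the known Cimasoni--Florens signatures of torus links, I would verify that it matches $-2$ times the increment of $h_L$ found above, establishing the inductive step.

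Assembling these, the induction starts from the Bénard--Conway identity at linking number one and propagates to all positive linking numbers; the case of negative linking number follows by applying the result to the mirror image, under which $h_L$ and the symmetrized signature change sign compatibly. The main obstacle I anticipate is the \emph{universality and locality} of the twist increment on the representation side: one must show by an excision/gluing argument for $\SU(2)$ character varieties that inserting the twist band changes the signed Lagrangian intersection number by an amount independent of the ambient tangle, with full control of orientations and of the behavior across the walls where $\Delta_L$ vanishes. This is equivalent to a Maslov-index computation generalizing Lin's linking-number-one analysis, and matching its sign conventions to those of the Cimasoni--Florens signature is where the torus-link calculation is indispensable.
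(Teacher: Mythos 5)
Your overall architecture --- induct via a local modification, match the jumps of both sides, and anchor the computation on the $(2,2\ell)$--torus links via Chebyshev polynomials --- resembles the paper's in its second half, but your choice of inductive move creates a genuine gap. The paper reduces an arbitrary link of linking number $\ell\neq 0$ to $T(2,2\ell)$ by crossing changes \emph{within a single component}: by Milnor's theorem the linking number is a complete link homotopy invariant of two-component links, so self-crossing changes alone suffice, and for exactly this move Bénard--Conway already supply the wall-crossing formula ($h_{L_+}-h_L$ counts the $\omega\in S_j(\alpha)$ with $\nabla_{L_+}(\omega^{1/2})\,\nabla_L(\omega^{1/2})<0$, while $\sigma_{L_+}-\sigma_L\in\{0,-2\}$ is governed by the same sign). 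Your move --- inserting a full twist in a band meeting both components, i.e.\ an inter-component crossing change --- requires a wall-crossing formula that is not in the cited literature and, more seriously, cannot take the form you assert. The increment is \emph{not} universal: already on the signature side, the jump of $\sigma_L(\omega_1,\omega_2)$ under a crossing change is $0$ or $-2$ depending on the sign of a Conway potential of the \emph{global} link, so it cannot depend only on the inserted twist and on $(\alpha_1,\alpha_2)$. Consequently ``reading off the increment from the torus-link computation'' pins it down only for the particular tangles occurring in $T(2,2\ell)$, not for an arbitrary ambient link, and your induction does not close.

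A second, smaller issue: even granting a matching (necessarily non-universal) wall-crossing formula for inter-component crossing changes, you would need the Alexander polynomial of every intermediate link in the chain from linking number $1$ up to $\ell$ to be nonzero at the points of $S(\alpha)$; the paper handles the analogous issue for self-crossing changes via the Torres formula (which shows $\Delta$ cannot vanish identically when $\lk(L_1,L_2)\neq 0$) together with a reduction to transcendental $(\alpha_1,\alpha_2)$ and local constancy of $h_L$. The honest statement of the lemma your route needs is that the jumps of $h_L$ and of $-\tfrac12\bigl(\sigma_L(\omega_1,\omega_2)+\sigma_L(\omega_1,\omega_2^{-1})\bigr)$ under an inter-component crossing change are \emph{equal to each other}, each depending on the global link through the sign of a potential function; that is a new theorem of roughly the same difficulty as the result you are trying to prove. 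The paper sidesteps it entirely by using Milnor's link homotopy classification to stay within a fixed linking number, so that only the already-established self-crossing wall-crossing formula is ever invoked.
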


The proof of this theorem consists of two parts, just like the proof of \cite[Theorem 1.1]{Benard-Conway}. The first part shows that a crossing change within an individual component of $L$ changes both sides of the formula of \thref{thm:main-theorem-introduction} by the same amount. This fact was only proved in \cite[Theorem 1.1]{Benard-Conway} for links with $\lk(L_1, L_2) = 1$, but that proof easily extends to links with $\lk(L_1, L_2) \neq 0$, as we explain in Section \ref{sec:induction}. After changing enough crossings within individual components of $L$, we only need to check that equation \eqref{E:main} holds for just one representative in each link homotopy class of $L$. According to Milnor \cite{Milnor1}, link homotopy classes of two component links are completely characterized by $\lk(L_1, L_2)$, therefore, it is sufficient to prove \thref{thm:main-theorem-introduction} for $(2, 2 \ell)$--torus links $L_{\ell}$ with $\lk(L_1, L_2) = \ell \neq 0$. 

This second part of the proof occupies Sections \ref{sec:count-rep}, \ref{sec:five} and \ref{sec:six} of the paper. We compute the Bénard--Conway invariant $h_{L_{\ell}}$ directly from its definition and compare the answer with the symmetrized Cimasoni--Florens signature. The invariant $h_{L_{\ell}}$, whose definition we recall in Section \ref{sec:two}, is in an intersection number of two oriented curves in a $2$-dimensional orbifold, traditionally referred to as a pillowcase. We come up with a parameterization of the pillowcase, in which the intersecting curves are given by explicit equations in terms of the Chebyshev polynomials; see Theorem \ref{T:chebyshev}. Checking the transversality and computing the intersection signs is then accomplished by a straightforward calculation. 

Note that the above argument fails for links $L = L_1 \cup L_2$ with $\lk (L_1,L_2) = 0$ because the base case, which is the link $L_{\ell}$ with $\ell = 0$, has a vanishing Alexander polynomial and hence its B{\'e}nard--Conway invariant is not defined. 

The following result follows from Theorem \ref{thm:main-theorem-introduction} and the properties of the Cimasoni--Florens signature \cite{Cimasoni-Florens}. It is proved in Section \ref{sec:seven} together with \thref{thm:main-theorem-introduction}.

\begin{theorem}\thlabel{thm:second-theorem-introduction}
For any two-component link $L = L_1 \cup L_2$ as in the statement of Theorem \ref{thm:main-theorem-introduction}, the invariant $h_L (\alpha_1, \alpha_2)$ is independent of the orientation of the link $L$. Moreover, $h_L (\pi/2,\pi/2)$ equals minus the Murasugi signature \cite{Murasugi} of the link $L$.  
\end{theorem}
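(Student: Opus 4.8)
The plan is to deduce both statements from \thref{thm:main-theorem-introduction}, which expresses $h_L$ entirely in terms of the Cimasoni--Florens signature $\sigma_L$, and then to appeal to the transformation properties of $\sigma_L$ established in \cite{Cimasoni-Florens}. Three such properties are needed: the conjugation symmetry $\sigma_L(\bar\omega_1, \bar\omega_2) = \sigma_L(\omega_1, \omega_2)$, which on the unit circle reads $\sigma_L(\omega_1^{-1}, \omega_2^{-1}) = \sigma_L(\omega_1, \omega_2)$; the effect of reversing the orientation of a single component, namely $\sigma_{L^i}(\dots, \omega_i, \dots) = \sigma_L(\dots, \omega_i^{-1}, \dots)$, where $L^i$ denotes $L$ with its $i$-th component reoriented; and the corner specialization $\sigma_L(-1, -1) = \sigma(L)$, where $\sigma(L)$ is the Murasugi signature.

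For the orientation independence, I would observe that the group of reorientations of $L$ is generated by the two single-component reversals, so it suffices to check that each leaves the right-hand side of \eqref{E:main} unchanged. Reversing $L_2$ replaces $\sigma_L(\omega_1, \omega_2)$ by $\sigma_L(\omega_1, \omega_2^{-1})$ and $\sigma_L(\omega_1, \omega_2^{-1})$ by $\sigma_L(\omega_1, \omega_2)$, merely permuting the two summands, so $h_{L^2} = h_L$. Reversing $L_1$ replaces the summands by $\sigma_L(\omega_1^{-1}, \omega_2)$ and $\sigma_L(\omega_1^{-1}, \omega_2^{-1})$; applying the conjugation symmetry to each yields $\sigma_L(\omega_1, \omega_2^{-1})$ and $\sigma_L(\omega_1, \omega_2)$, so $h_{L^1} = h_L$ as well. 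As a consistency check, reversing both components sends $(\omega_1, \omega_2) \mapsto (\omega_1^{-1}, \omega_2^{-1})$ in each summand, and conjugation symmetry returns the original value; hence $h_L(\alpha_1, \alpha_2)$ is independent of the orientation of $L$.

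For the second claim, I would set $\alpha_1 = \alpha_2 = \pi/2$, so that $\omega_1 = \omega_2 = e^{\i \pi} = -1$ and $\omega_2^{-1} = -1$. Both summands in \eqref{E:main} then collapse to $\sigma_L(-1, -1)$, giving $h_L(\pi/2, \pi/2) = -\sigma_L(-1, -1)$. The corner specialization identifies $\sigma_L(-1, -1)$ with the Murasugi signature $\sigma(L)$, whence $h_L(\pi/2, \pi/2) = -\sigma(L)$, as claimed.

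The main obstacle is the orientation-reversal formula for $\sigma_L$: one must verify that reversing a single component induces exactly the inversion $\omega_i \mapsto \omega_i^{-1}$ with no additional correction term, or else that any such correction is odd under the symmetrization and cancels in \eqref{E:main}. This requires tracking the effect of reorientation on the generalized Seifert form of a C-complex and its associated Hermitian matrix, with careful attention to the $\pm$ sign conventions attached to the clasps. By contrast, establishing the corner specialization $\sigma_L(-1, -1) = \sigma(L)$ is comparatively routine, following from the fact that at $(\omega_1, \omega_2) = (-1, -1)$ the Cimasoni--Florens Hermitian form reduces to the symmetrized Seifert form of a connected Seifert surface for the whole link.
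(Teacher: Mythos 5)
Your treatment of the orientation independence is correct and follows essentially the same route as the paper: both arguments combine the conjugation symmetry $\sigma_L(\omega_1^{-1},\omega_2^{-1})=\sigma_L(\omega_1,\omega_2)$ with the Cimasoni--Florens reversal formula $\sigma_{L_1\cup L_2}(\omega_1,\omega_2^{-1})=\sigma_{L_1\cup -L_2}(\omega_1,\omega_2)$ to show that each single-component reversal merely permutes the two summands on the right-hand side of \eqref{E:main}. (Strictly speaking one should also note that the non-vanishing hypothesis on $\Delta_L$ over the set $S(\alpha)$ is preserved under reorientation, so that \thref{thm:main-theorem-introduction} applies to the reoriented link; this is harmless and the paper glosses over it as well.)

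The second part, however, contains a genuine error. Your ``corner specialization'' $\sigma_L(-1,-1)=\sigma(L)$ is false precisely when $\lk(L_1,L_2)\neq 0$, which is the standing hypothesis here. A C-complex is not a Seifert surface: the clasps contribute, and the correct relation --- Proposition 2.5 of Cimasoni--Florens, quoted in Section \ref{sec:seven} --- reads $\sigma_{L_1\cup L_2}(\omega)=\sigma_{L_1\cup L_2}(\omega,\omega)-\lk(L_1,L_2)$, so that $\sigma_L(-1,-1)=\sigma_L(-1)+\lk(L_1,L_2)$. A concrete check: for the $(2,4)$--torus link the matrix $H_2(-1,-1)$ is the $1\times 1$ matrix $(-8)$, giving $\sigma_{L_2}(-1,-1)=-1$, whereas the Murasugi signature of that link is $-3$ and the linking number is $2$. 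Consequently your computation would output $-\sigma(L)-\lk(L_1,L_2)$ rather than the claimed $-\sigma(L)$. The paper avoids this pitfall by \emph{not} collapsing the two summands at the outset: it rewrites $\sigma_{L}(\omega,\omega^{-1})$ as $\sigma_{L_1\cup -L_2}(\omega,\omega)$, applies Proposition 2.5 to each of $\sigma_{L_1\cup L_2}(\omega,\omega)$ and $\sigma_{L_1\cup -L_2}(\omega,\omega)$ so that the two corrections $\lk(L_1,L_2)$ and $\lk(L_1,-L_2)=-\lk(L_1,L_2)$ cancel, obtains $h_L(\alpha,\alpha)=-\tfrac{1}{2}\bigl(\sigma_{L_1\cup L_2}(\omega)+\sigma_{L_1\cup -L_2}(\omega)\bigr)$ in terms of Levine--Tristram signatures, and only then sets $\omega=-1$. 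To repair your argument you must keep the two reorientations separate and carry out exactly the clasp/linking-number bookkeeping that you deferred as ``comparatively routine''; that bookkeeping is the substance of this half of the proof.
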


Finally, we wish to mention how our work is related to that of Daemi and Scaduto \cite{Daemi-Scaduto-2}. Let $L$ be a link in $S^3$ of any number of components with non-zero determinant. Daemi and Scaduto  \cite{Daemi-Scaduto-2} define irreducible instanton homology $I(L)$ as a $\mathbb Z/4$ graded abelian group which, in favorable circumstances, is generated at the chain level by the conjugacy classes of irreducible $\SU(2)$ representations of $\pi_1 (S^3 - L)$ sending meridians to zero-trace matrices. In general, a perturbation may be necessary to achieve transversality. One expects that the Euler characteristic of $I(L)$ equals, up to an overall constant, the Benard--Conway invariant $h_L (\pi/2,\ldots,\pi/2)$. Daemi and Scaduto \cite{Daemi-Scaduto-2} further show that the Euler characteristic of $I(L)$ is proportional to the Murasugi signature of the link $L$, which matches our Theorem 1.2 in the special case of two-component links $L$ with $\alpha_1 = \alpha_2 = \pi/2$. 

\medskip\noindent
{\bf Acknowledgments:} We thank Hans Boden, Anthony Conway, Daniel Ruberman, and Chris Scaduto for useful discussions and sharing their expertise. 

	
\section{Preliminaries}\label{sec:two}
In this section, we will recall the definitions of the Cimasoni--Florens signature \cite{Cimasoni-Florens} and the B{\'e}nard--Conway invariant \cite{Benard-Conway} for oriented links in the 3-sphere.
		

\subsection{The Cimasoni--Florens signature}
A \textit{$\mu$--colored link} $L \subset S^3$ is an oriented link whose components are partitioned into sublinks $L = L_1\,\cup \ldots \cup\, L_{\mu}$. A \textit{C-complex} for a $\mu$-colored link $L$ is a union $S = S_1 \cup \dots \cup S_{\mu}$ of oriented surfaces in $S^3$ such that
\begin{enumerate}
\item for all $i$, $S_i$ is a Seifert surface for $L_i$ (possibly disconnected, but with no closed components),
\item for all $i \ne j$, $S_i \cap S_j$ is either empty or a union of clasps (see \autoref{fig:clasp}), and 
\item for all $i, j, k$ which are pairwise distinct, $S_i \cap S_j \cap S_k$ is empty.
\end{enumerate}

\medskip
\begin{figure}[!ht]
\centering
\includegraphics[width=0.98\textwidth]{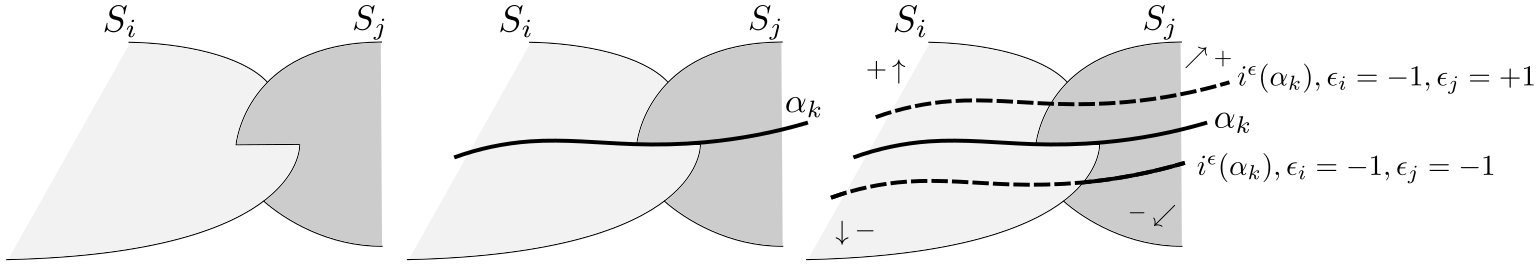}
\caption{A clasp, a loop crossing clasp, and examples of push offs}
\label{fig:clasp}
\end{figure}
		
Such a C-complex exists for any colored link, see \cite[Lemma 1]{Cimasoni} for the proof. A $1$-cycle in a C-complex is called a \textit{loop} if it is an oriented simple closed curve which behaves as illustrated in \autoref{fig:clasp} whenever it crosses a clasp. There exists a collection of loops whose homology classes form a basis of $H_1(S)$. Define $i^{\epsilon}: H_1(S) \to H_1(S^3 \setminus S)$ by setting $i^{\epsilon}([x])$ equal the class of the $1$-cycle obtained by pushing $x$ in the $\epsilon_i$--normal direction off $S_i$ for $i = 1, \dots, \mu$; see \autoref{fig:clasp}. Let
		\[
			\alpha^{\epsilon}: H_1(S) \times H_1(S) \to \mathbb{Z}
		\]
		be the bilinear form given by $\alpha^{\epsilon}(x, y) = \lk(i^{\epsilon}(x), y)$ where $\lk$ denotes the linking number. Fix a basis of $H_1(S)$ and denote by $A^{\epsilon}$ the matrix of $\alpha^{\epsilon}$. If $\mu = 1$, then $\alpha^-$ is the usual Seifert form and $A^-$ is the usual Seifert matrix. Also, note that $A^{- \epsilon} = (A^{\epsilon})^{\top}$ for any $\epsilon$. {\par}
		
Given a $\mu$-colored link $L$ and a C-complex $S$, fix a basis in $H_1 (S)$ and consider the associated Seifert matrices $A^{\epsilon}$. Let $A(t_1, \dots, t_{\mu})$ be the matrix with coefficients in $\mathbb{Z}[t_1, \dots, t_{\mu}]$ defined by 
		\[
			A(t_1, \dots, t_{\mu})\, =\, \sum\limits_{\epsilon}\; \epsilon_1 \cdots \epsilon_{\mu}\; t_1^{\frac{1 - \epsilon_1}{2}} \cdots t_{\mu}^{\frac{1 - \epsilon_{\mu}}{2}} A^{\epsilon},
		\]
where summation runs over the $2^{\mu}$ possible sequences $\epsilon = (\epsilon_1, \dots, \epsilon_{\mu})$ of $(\pm 1)$'s. For $\omega = (\omega_1, \dots, \omega_{\mu}) \in (S^1)^{\mu} \subset \mathbb{C}^{\mu}$, set
\begin{equation}\label{E:matrix H}
H(\omega)\, =\, \prod_{i = 1}^{\mu}\; (1 - \overline{\omega}_i) A(\omega).
\end{equation}
Since $A^{- \epsilon} = (A^{\epsilon})^{\top}$, the matrix $H(\omega)$ is Hermitian, hence its eigenvalues are real. Define the \textit{signature} $\sign H(\omega)$ as the number of positive eigenvalues minus the number of negative eigenvalues. This signature does not depend on the choice of basis of $H_1(S)$, nor the choice of the C-complex of the link, thus it gives a well-defined isotopy invariant of the link $L$; see \cite[Theorem 2.1]{Cimasoni-Florens} for the proof.
		
		\begin{definition}
			Let $L$ be a $\mu$-colored link and $T_*^{\mu} = (S^1 \setminus \lbrace 1 \rbrace)^{\mu}$. The \textit{Cimasoni--Florens signature} of $L$ is the function $\sigma_L: T_*^{\mu} \to \mathbb{Z}$ given by $\sigma_L(\omega) = \sign H(\omega)$.
		\end{definition}


\subsection{Colored links and colored braids}
Before we go on to define the B{\'e}nard--Conway invariant of colored links, we will interpret the latter as the closures of colored braids; see Murakami \cite{Murakami}.

Recall that the (Artin) braid group $B_n$ on $n$ strands is the finitely presented group with $(n-1)$ generators $\sigma_1, \sigma_2, \dots, \sigma_{n - 1}$ subject to the relations $\sigma_i \sigma_{i + 1} \sigma_i = \sigma_{i + 1} \sigma_i \sigma_{i + 1}$ for each $i$, and $\sigma_i \sigma_j = \sigma_j \sigma_i$ for $|i - j| > 1$. Geometrically, a generator $\sigma_i$ can be viewed as the isotopy class of the braid whose $(i+1)$-st strand crosses over the $i$-th strand. The \textit{closure} $\widehat{\beta}$ of a braid $\beta \in B_n$ is the link obtained from $\beta$ by connecting the lower endpoints of the braid and its upper endpoints with parallel strands. The link $\widehat\beta$ is canonically oriented by choosing the downward orientations on the strands of $\beta$. 

A \textit{$\mu$--colored braid} is a braid $\beta \in B_n$ together with an assignment to each of its strands of an integer (called the \textit{color}) in $\lbrace 1, 2, \dots, \mu \rbrace$ via a surjective map. A $\mu$--colored braid induces \textit{$\mu$--colorings} $c$ and $c'$ on the upper and lower endpoints of the braid, which are $n$--tuples of integers in $\lbrace 1,2,\ldots,\mu \rbrace$. For any $\mu$--coloring $c$, the $\mu$--colored braids with $c' = c$ form a \textit{colored braid group} $B_c \subset B_n$. For example, if $\mu = 1$, then $c = (1,\ldots,1)$ and $B_c$ is simply the braid group $B_n$, and if $\mu = n$ and $c = (1, 2, \dots, n )$ then $B_c$ is the pure braid group on $n$ strands. 

The \textit{closure} $\widehat{\beta}$ of a $\mu$--colored braid $\beta\in B_c$,  obtained from $\beta$ by connecting the lower endpoints of the braid with the upper endpoints with colored parallel strands, is a $\mu$--colored link. A colored version of Alexander's theorem states that every $\mu$--colored link is the closure $\widehat \beta$ of a $\mu$--colored braid $\beta \in B_c$, and a colored version of Markov's theorem determines when two $\mu$--colored braids have isotopic closures.


\subsection{The B{\'e}nard--Conway invariant}\label{sec:invariant-def}
Consider the Lie group $\SU(2)$ of two--by--two unitary matrices with determinant one. We will be identifying it with the Lie group $\Sp(1)$ of unit quaternions via
\begin{equation}\label{E:sp1}
\begin{pmatrix}
a & b \\
- \overline{b} & \overline{a}
\end{pmatrix} \; \mapsto \; a + b \j
\end{equation}

\smallskip\noindent
and using the language of matrices and unit quaternions interchangeably. 

Let $F_n$ be a free group on $n$ generators $x_1,\ldots, x_n$. The group $B_n$ acts naturally on $F_n$ via 
\begin{equation}
			\label{eq:group-action}
			x_j \sigma_i = \begin{cases}
				\; x_i x_{i + 1} x_i^{- 1}, & j = i, \\
				\; x_i, & j = i + 1, \\
				\; x_j, & \text{otherwise.}
			\end{cases}
		\end{equation}

\smallskip\noindent
This action induces an action on the representation space $R(F_n) = \Hom(F_n, \SU(2)) = \SU(2)^n$. More concretely, every braid $\beta \in B_n$ induces a homeomorphism $\beta: \SU(2)^n \to \SU(2)^n$ by the rule $(X_1,\ldots, X_n)\beta = (X_1\beta,\ldots, X_n\beta)$. For example, $(X_1, X_2)\,\sigma_1 = (X_1 X_2 X_1^{-1}, X_1)$ for the generator $\sigma_1 \in B_2$.

Let now $L = L_1\,\cup\ldots\cup\,L_{\mu}$ be an oriented $\mu$--colored link. Represent it as the closure of a $\mu$--colored braid $\beta \in B_c$ on $n$ strands. Given a $\mu$--tuple $\alpha = (\alpha_1,\ldots,\alpha_{\mu}) \in (0,\pi)^{\mu}$, consider the representation space
\[
R_n^{\alpha, c} = \big\lbrace \left( X_1, X_2, \dots, X_n \right) \in \SU(2)^n \; \big| \; \tr(X_i) = 2 \cos(\alpha_{c_i})\; \text{ for }\; i = 1, \ldots, n \, \big\rbrace.
\]
Since the trace is preserved by conjugation, the homeomorphism $\beta: \SU(2)^n \to \SU(2)^n$ constructed above restricts to a homeomorphism $\beta: R_n^{\alpha, c} \to R_n^{\alpha, c}$ with the graph 
\[
\Gamma_{\beta}^{\alpha,c} = \big\lbrace \left( X_1, X_2, \dots, X_n, X_1 \beta, X_2 \beta, \dots, X_n \beta \right) \in R_n^{\alpha, c} \times R_n^{\alpha, c} \big\rbrace.
\]
Note that the trivial braid in $B_c$ gives rise to the graph which is just the diagonal 
\[
\Lambda_n^{\alpha, c} = \big\lbrace \left( X_1, X_2, \dots, X_n, X_1, X_2, \dots, X_n \right) \in R_n^{\alpha, c} \times R_n^{\alpha, c} \big\rbrace.
\]
Since the product $X_1\cdots X_n$ is preserved by the action of $\beta$ (see formula \eqref{eq:group-action}), one immediately concludes that both $\Gamma_{\beta}^{\alpha,c}$ and $\Lambda_n^{\alpha, c}$ are subspaces of the ambient space 
\[
H_n^{\alpha, c} = \big\lbrace \left( X_1, X_2, \dots, X_n, Y_1, Y_2, \dots, Y_n \right) \in R_n^{\alpha, c} \times R_n^{\alpha, c} \; \big| \; X_1\cdots X_n\, =\, Y_1\cdots Y_n\, \big\rbrace.
\]
The group $\SO(3)$ acts via conjugation on the spaces $\Gamma_{\beta}^{\alpha,c}$, $\Lambda_n^{\alpha, c}$ and $H_n^{\alpha, c}$. We will restrict this action to irreducible representations, where it is free, and denote the quotient spaces by $\widehat{\Gamma}_{\beta}^{\alpha,c}$, $\widehat{\Lambda}_n^{\alpha, c}$ and $\widehat{H}_n^{\alpha, c}$. These are smooth open manifolds of dimensions 
\[
\dim \widehat{\Gamma}_{\beta}^{\alpha,c}\, =\, 2n - 3,\quad \dim \widehat{\Lambda}_n^{\alpha, c}\, =\, 2n - 3,\quad \dim \widehat{H}_n^{\alpha, c}\, =\, 4n - 6;
\]
see \cite[Lemma 3.4]{Benard-Conway}. 

The key observation now is that the points of the intersection $\widehat{\Lambda}_n^{\alpha, c}\,\cap\,\widehat{\Gamma}_{\beta}^{\alpha,c} \subset \widehat{H}_n^{\alpha, c}$ are precisely the conjugacy classes of irreducible $\SU(2)$ representations of the link group 
\begin{equation}\label{E:presentation}
\pi_1 \big(S^3 - L\,\big)\, =\, \big\langle\; x_1,\ldots,x_n\; \big|\; x_i = x_i\beta\; \text{ for }\; i = 1,\ldots, n\;\big\rangle
\end{equation}
sending the meridians $x_i$ to matrices $X_i \in \SU(2)$ of trace $\tr (X_i) = 2\cos(\alpha_{c_i})$. From this point on, the definition of the B{\'e}nard--Conway invariant proceeds by making sense of the intersection number of $\widehat{\Lambda}_n^{\alpha, c}$ and $\widehat{\Gamma}_{\beta}^{\alpha,c}$ in $\widehat{H}_n^{\alpha, c}$. We will briefly outline the procedure and refer to \cite{Benard-Conway} for detailed proofs. 

Let $\Delta_L(t_1,\ldots, t_{\mu})$ be the multivariable Alexander polynomial of $L$, and, given a $\mu$--tuple $\alpha = (\alpha_1,\ldots, \alpha_{\mu})\, \in\, (0, \pi)^{\mu}$, consider the finite set 
\begin{equation}\label{E:S}
S(\alpha)\, =\, \big\lbrace  \big(e^{\epsilon_1 2 \i \alpha_1}, \dots, e^{\epsilon_{\mu} 2 \i \alpha_{\mu}}\big)
\; \big| \; \epsilon_i = \pm 1\;\text{ for }\; i = 1, \dots, \mu \big\rbrace.
\end{equation}

\begin{proposition}\thlabel{prop:compactness}
If $\Delta_{\widehat{\beta}}\,(\omega) \neq 0$ for all $\mu$--tuples $\omega \in S(\alpha)$ then the intersection $\widehat{\Lambda}_n^{\alpha, c}\,\cap\,\widehat{\Gamma}_{\beta}^{\alpha,c} \subset \widehat{H}_n^{\alpha, c}$ is compact.
\end{proposition}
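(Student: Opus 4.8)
The plan is to argue by contradiction, exploiting two facts: that the non-compactness of the quotient intersection can only be produced by irreducible representations degenerating onto the reducible locus, and that such a degeneration is obstructed precisely by the non-vanishing of the Alexander polynomial on $S(\alpha)$. In other words, I would show that under the hypothesis no sequence of conjugacy classes of irreducible representations can run off to the boundary of the open manifold $\widehat{H}_n^{\alpha,c}$.

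First I would record that, before passing to quotients, everything sits inside a compact space. Since $\SU(2)$ is compact and the conditions $\tr(X_i) = 2\cos(\alpha_{c_i})$ are closed, the space $R_n^{\alpha,c}$ is a compact subset of $\SU(2)^n$, and hence $H_n^{\alpha,c}$ together with its subspaces $\Lambda_n^{\alpha,c}$ and $\Gamma_\beta^{\alpha,c}$ are all compact. By the identification recalled before the statement, $\Lambda_n^{\alpha,c}\cap\Gamma_\beta^{\alpha,c}$ is exactly the set of $\SU(2)$ representations $\rho$ of the link group $\pi_1(S^3 - \widehat\beta)$ with $\tr\rho(x_i) = 2\cos(\alpha_{c_i})$, and $\widehat\Lambda_n^{\alpha,c}\cap\widehat\Gamma_\beta^{\alpha,c}$ is the irreducible part of this set modulo conjugation. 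Because $\SO(3)$ is compact and acts freely on the irreducible locus, the quotient $\widehat\Lambda_n^{\alpha,c}\cap\widehat\Gamma_\beta^{\alpha,c}$ is compact if and only if the irreducible part $(\Lambda_n^{\alpha,c}\cap\Gamma_\beta^{\alpha,c})^{\mathrm{irr}}$ is compact, so the only way compactness can fail is for a sequence of irreducible representations to accumulate on the reducible locus that was deleted in forming $\widehat{H}_n^{\alpha,c}$.

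Concretely, suppose the intersection is not compact and choose irreducible representations $\rho_k \in (\Lambda_n^{\alpha,c}\cap\Gamma_\beta^{\alpha,c})^{\mathrm{irr}}$ whose classes $[\rho_k]$ leave every compact subset of the quotient. By compactness of $\Lambda_n^{\alpha,c}\cap\Gamma_\beta^{\alpha,c}$, a subsequence converges to a representation $\rho_\infty$ of $\pi_1(S^3 - \widehat\beta)$ with the prescribed meridional traces. The limit $\rho_\infty$ cannot be irreducible, for otherwise $[\rho_\infty]$ would be an interior point of the quotient and the $[\rho_k]$ would accumulate at it inside a compact neighborhood. Hence $\rho_\infty$ is reducible, so after conjugation it is abelian: it factors through the abelianization of $\pi_1(S^3 - \widehat\beta)$ and sends each meridian of color $j$ into a fixed maximal torus with eigenvalue $e^{\pm\i\alpha_j}$. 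The resulting character then evaluates the Alexander module of $\widehat\beta$ at a point $\omega \in S(\alpha)$, the sign choices $\epsilon_j = \pm 1$ recording the two ways a trace-$2\cos\alpha_j$ element sits inside the torus.

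The heart of the argument, and the step I expect to be the main obstacle, is to show that an abelian representation arising as a limit of irreducibles forces $\Delta_{\widehat\beta}(\omega) = 0$ for the corresponding $\omega \in S(\alpha)$, contradicting the hypothesis. I would carry this out through the deformation theory of reducible representations. The Zariski tangent space to the representation variety at $\rho_\infty$ is computed by a twisted cohomology group of $S^3 - \widehat\beta$; complexifying and decomposing the adjoint action of $\rho_\infty$ under the maximal torus containing its image, this cohomology splits into a summand on which the abelianization acts trivially and two summands on which color $j$ acts through $\omega_j^{\pm 1} = e^{\pm 2\i\alpha_j}$. The latter two summands are identified with the Alexander module of $\widehat\beta$ evaluated at the points of $S(\alpha)$, and their non-triviality is equivalent to the vanishing of $\Delta_{\widehat\beta}$ there. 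A convergent sequence of irreducibles $\rho_k \to \rho_\infty$ produces, after rescaling the off-diagonal part of $\rho_k$ relative to $\rho_\infty$, a nonzero class in one of these non-abelian summands; verifying that this blow-up limit is genuinely nonzero, and keeping the multivariable bookkeeping so that the evaluation point lands in $S(\alpha)$, is the delicate part. Once it is in place, the hypothesis $\Delta_{\widehat\beta}(\omega) \neq 0$ for all $\omega \in S(\alpha)$ rules out every such degeneration, so no escaping sequence exists and $\widehat\Lambda_n^{\alpha,c}\cap\widehat\Gamma_\beta^{\alpha,c}$ is compact.
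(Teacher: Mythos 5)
The paper does not prove this proposition at all: it is quoted from B\'enard--Conway, and the reader is referred there for the proof. Your outline follows exactly the strategy of that cited proof --- compactness of the unquotiented spaces, the observation that an escaping sequence of irreducibles must limit to a reducible (hence abelian) representation with the prescribed meridional traces, and the obstruction to such a limit coming from the twisted cohomology summands identified with the Alexander module evaluated at points of $S(\alpha)$. The one caveat is that the step you flag as delicate --- that an abelian representation which is a limit of irreducibles forces $\Delta_{\widehat\beta}(\omega)=0$ for the corresponding $\omega\in S(\alpha)$ --- is the entire mathematical content of the proposition (everything surrounding it is soft point-set topology of compact group actions), and you describe rather than execute it; in B\'enard--Conway this is a standalone theorem generalizing the Heusener--Kroll deformation argument to the multivariable setting, so your proposal should be read as a correct reduction to that result rather than a self-contained proof.
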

		
Let $\alpha = (\alpha_1,\ldots, \alpha_{\mu})$ satisfy the condition of \thref{prop:compactness}. Since $\widehat{\Lambda}_n^{\alpha,c}\,\cap\,\widehat{\Gamma}_{\beta}^{\alpha, c} \subset \widehat{H}_n^{\alpha, c}$ is compact, the graph $\widehat{\Gamma}_{\beta}^{\alpha,c}$ can be perturbed if necessary using a perturbation with compact support to make the intersection $\widehat{\Lambda}_n^{\alpha,c}\,\cap\,\widehat{\Gamma}_{\beta}^{\alpha, c} \subset \widehat{H}_n^{\alpha, c}$ transversal and hence a compact $0$-dimensional manifold.

Next, we will orient all the manifolds in question. Denote by $\mathbb{S}(\theta)$ the conjugacy class of matrices in $\SU(2)$ with the trace $2\cos \theta$. Assuming that $0 < \theta < \pi$, the conjugacy classes $\mathbb S(\theta)$ are naturally homeomorphic to each other and to the standard 2-sphere $S^2$. Choose an (arbitrary) orientation on $S^2$. The space $R_n^{\alpha, c}$ is a product of the spheres $\mathbb{S}(\alpha_{c_i})$, $i = 1,\ldots, n$, and we will endow it with the product orientation. The spaces $\Lambda_{n}^{\alpha, c}$ and $\Gamma_{\beta}^{\alpha,c}$, which are diffeomorphic to $R_n^{\alpha, c}$ via projection onto the first $n$ factors, will be endowed with the induced orientations. To orient $H_n^{\alpha, c}$, consider the map $f_n: R_n^{\alpha, c} \times R_n^{\alpha, c} \to \SU(2)$ given by $f_n(X_1, \dots, X_n, Y_1, \dots, Y_n) = X_1 \cdots X_n\, (Y_1 \cdots Y_n)^{- 1}$. Observe that $H_n^{\alpha, c} = f_n^{- 1}(1)$, so that we can pull back the canonical orientation of $\SU(2)$ to obtain an orientation on $H_n^{\alpha, c}$. Since the adjoint action of $\SO(3)$ on each $\mathbb{S}(\theta)$ is orientation preserving, we can endow $\widehat{\Lambda}_n^{\alpha, c}$, $\widehat{\Gamma}_{\beta}^{\alpha,c}$, and $\widehat{H}_n^{\alpha, c}$ with the induced quotient orientation. 
		
The intersection number of $\widehat{\Lambda}_n^{\alpha, c}$ and $\widehat{\Gamma}_{\beta}^{\alpha,c}$ will be denoted by $\big\langle\widehat{\Lambda}_n^{\alpha, c}, \widehat{\Gamma}_{\beta}^{\alpha,c}\big\rangle_{\widehat{H}_n^{\alpha, c}}$. The following proposition ensures that it only depends on the isotopy class of the closure of $\beta$.
		
\begin{proposition}\thlabel{prop:markov-moves}
Under the assumptions of \thref{prop:compactness}, the intersection number $\big\langle \widehat{\Lambda}_n^{\alpha, c}, \widehat{\Gamma}_{\beta}^{\alpha,c}\big\rangle_{\widehat{H}_n^{\alpha, c}}$ is preserved by the Markov moves.
\end{proposition}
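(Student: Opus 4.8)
The plan is to invoke the colored version of Markov's theorem (see \cite{Murakami}), which reduces the claim to checking invariance under two moves: conjugation $\beta \mapsto \gamma^{-1}\beta\gamma$ by a colored braid $\gamma \in B_c$, and colored stabilization $\beta \mapsto \beta\,\sigma_n^{\pm 1}$, in which one appends to $\beta \in B_c$ a single strand colored the same as the $n$-th strand. In each case I would produce an explicit correspondence between the intersection points in the two pictures and verify that it preserves local intersection signs. Since, under the hypotheses of \thref{prop:compactness}, the intersection is compact and may be made transversal by a compactly supported perturbation, matching the signed point counts yields equality of the intersection numbers.

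For the conjugation move I would consider the map $\Phi_\gamma \colon H_n^{\alpha,c} \to H_n^{\alpha,c}$ sending $(X_1,\dots,X_n,Y_1,\dots,Y_n)$ to $(X\gamma, Y\gamma)$, where $X\gamma$ is the action \eqref{eq:group-action}. Because this action preserves the product $X_1\cdots X_n$, the map $\Phi_\gamma$ respects the defining constraint of $H_n^{\alpha,c}$ and commutes with $f_n$, so it is a diffeomorphism of $H_n^{\alpha,c} = f_n^{-1}(1)$; it is moreover $\SO(3)$--equivariant and hence descends to $\widehat{H}_n^{\alpha,c}$. A direct computation shows that $\Phi_\gamma$ carries the diagonal $\Lambda_n^{\alpha,c}$ to itself and the graph $\Gamma_\beta^{\alpha,c}$ to $\Gamma_{\gamma^{-1}\beta\gamma}^{\alpha,c}$. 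The only remaining point is orientations: each generator $\sigma_i$ acts on the relevant product of conjugacy classes $\mathbb{S}(\theta)$ by a twist isotopic to a transposition of two even-dimensional spheres, hence orientation preservingly, so $\gamma$ and therefore $\Phi_\gamma$ preserve orientation on $\widehat{H}_n^{\alpha,c}$. An orientation-preserving ambient diffeomorphism carrying one pair of submanifolds to another preserves the intersection number, giving $\langle \widehat\Lambda_n^{\alpha,c}, \widehat\Gamma_\beta^{\alpha,c}\rangle = \langle \widehat\Lambda_n^{\alpha,c}, \widehat\Gamma_{\gamma^{-1}\beta\gamma}^{\alpha,c}\rangle$.

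For the stabilization move I would first unwind the defining equations. Viewing $\beta \in B_n \subset B_{n+1}$ so that it fixes the last strand, the intersection conditions $X_i = X_i(\beta\sigma_n)$ become $X_i = X_i\beta$ for $i \le n-1$, together with $X_n = (X_n\beta)\,X_{n+1}\,(X_n\beta)^{-1}$ and $X_{n+1} = X_n\beta$; substituting the last relation into the middle one forces $X_n = X_n\beta$ and hence $X_{n+1} = X_n$. Thus $(X_1,\dots,X_n) \mapsto (X_1,\dots,X_n,X_n)$ is a bijection between $\widehat\Lambda_n^{\alpha,c} \cap \widehat\Gamma_\beta^{\alpha,c}$ and $\widehat\Lambda_{n+1}^{\alpha,c'} \cap \widehat\Gamma_{\beta\sigma_n}^{\alpha,c'}$, where $c'$ extends $c$ by $c_{n+1} = c_n$; it respects irreducibility and the $\SO(3)$--action, and the same computation with $\sigma_n^{-1}$ gives the analogous bijection. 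It then remains to promote this set-theoretic bijection to an identification of signed counts: I would exhibit a local product model near each stabilized intersection point in which the coordinate $X_{n+1}$, together with the relation $X_{n+1} = X_n$, splits off as a transverse factor contributing intersection sign $+1$, so that a compactly supported perturbation making $\widehat\Gamma_\beta^{\alpha,c}$ transverse to $\widehat\Lambda_n^{\alpha,c}$ induces a transverse perturbation in the stabilized picture with matching signs.

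The main obstacle I anticipate is precisely this last orientation bookkeeping for stabilization. The ambient dimension jumps by four while each of $\widehat\Lambda$ and $\widehat\Gamma$ gains two, so one must track the product orientation on $R_{n+1}^{\alpha,c'}$, the pulled-back orientation on $H_{n+1}^{\alpha,c'} = f_{n+1}^{-1}(1)$, and the $\SO(3)$--quotient orientation simultaneously, and confirm that the four new normal directions contribute a net sign of $+1$ (and that the $\sigma_n$ and $\sigma_n^{-1}$ cases agree). Concretely I would fix bases of the tangent spaces adapted to the splitting $X_{n+1} = X_n$ and compute the sign of the resulting change of basis against the orientation conventions of Section~\ref{sec:invariant-def}. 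This is a finite linear-algebra verification, but it is the delicate step, since a sign error here would break the well-definedness of $h_L$. Everything else — the reduction to the two moves, the conjugation diffeomorphism, and the algebraic bijection on intersection points — is formal once the correspondence is in place.
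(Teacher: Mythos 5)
The paper itself does not prove this proposition: it is recalled from B\'enard--Conway, and the text explicitly defers to \cite{Benard-Conway} for the details. So the relevant comparison is with the cited argument (which goes back to Lin \cite{Lin} and Heusener--Kroll \cite{Heusener-Kroll}), and your outline is a faithful reconstruction of it rather than a different route. The reduction to conjugation and stabilization via the colored Markov theorem, the conjugation case via the $\SO(3)$--equivariant map $(X,Y)\mapsto(X\gamma,Y\gamma)$ (which commutes with $f_n$, hence preserves the pulled-back orientation of $H_n^{\alpha,c}=f_n^{-1}(1)$, and carries $\Lambda_n^{\alpha,c}$ to itself and $\Gamma_\beta^{\alpha,c}$ to $\Gamma_{\gamma^{-1}\beta\gamma}^{\alpha,c}$), and the stabilization case via the computation forcing $X_{n+1}=X_n$ and $X_n=X_n\beta$ at any fixed point of $\beta\sigma_n^{\pm1}$ --- all of this is correct and is exactly how the cited proof is organized. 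Two small points on the conjugation half: the colored Markov conjugation move is by a braid that may change the coloring, so $\Phi_\gamma$ is really a diffeomorphism $\widehat H_n^{\alpha,c}\to\widehat H_n^{\alpha,c'}$ (harmless, since the same argument applies); and the general fact you invoke requires $\Phi_\gamma$ to respect the orientations of the \emph{submanifolds} as well as of the ambient space --- this follows from the same observation that $\gamma$ acts orientation--preservingly on $R_n^{\alpha,c}$, since $\Lambda$ and $\Gamma_\beta$ are oriented via projection onto the first factor, but it should be said.

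The genuine shortfall is the one you flag yourself: the orientation bookkeeping for stabilization is the entire content of that half of the proposition, and it is deferred rather than done. Establishing the set-theoretic bijection between $\widehat\Lambda_n^{\alpha,c}\cap\widehat\Gamma_\beta^{\alpha,c}$ and $\widehat\Lambda_{n+1}^{\alpha,c'}\cap\widehat\Gamma_{\beta\sigma_n^{\pm1}}^{\alpha,c'}$ is the easy part; what must actually be verified is that in a local model adapted to the splitting $X_{n+1}=X_n$ the four new ambient directions and the two new directions in each of $\widehat\Lambda$ and $\widehat\Gamma$ contribute a net sign of $+1$ against the conventions of Section~\ref{sec:invariant-def} (product orientation on $R_{n+1}^{\alpha,c'}$, pullback under $f_{n+1}$, $\SO(3)$--quotient), and that the answer is the same for $\sigma_n$ and $\sigma_n^{-1}$ --- a priori the two stabilizations could differ by a sign, which would break well-definedness. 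You would also need to make precise the claim that a compactly supported perturbation achieving transversality in the $n$--strand picture induces one in the $(n+1)$--strand picture with matching local signs (or argue directly in the stabilized picture); as written this is asserted, not proved. None of these steps would fail --- they are carried out in \cite{Benard-Conway} --- but as it stands the proposal is a correct plan whose decisive computation remains to be executed.
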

		
We will summarize the above construction in the following definition. 	
	
\begin{definition}\thlabel{def:invariant-def}
Let $L$ be a $\mu$--colored link in $S^3$ and $\alpha \in (0, \pi)^{\mu}$ a $\mu$--tuple. Let $\beta \in B_c$ be a colored braid of $n$ strands whose closure is $L$. Suppose that $\Delta_L(\omega_{\epsilon}) \ne 0$ for all $\omega_{\epsilon} \in S(\alpha)$. Then the \textit{Bénard--Conway invariant} of $L$ is well-defined by the formula
\begin{equation}\label{E:def}
h_L(\alpha)\, =\, \big\langle \widehat{\Lambda}_n^{\alpha, c}, \widehat{\Gamma}_{\beta}^{\alpha,c}\big\rangle_{\widehat{H}_n^{\alpha, c}}.
\end{equation}
\end{definition}
		
		
\section{Inductive step for two-component links}\label{sec:induction}
From now on, we will restrict ourselves to 2-colored links $L$ of two components, which are just ordered two-component links. In this section, we will investigate what happens to the two sides of the formula \eqref{E:main} under a crossing change within a single component of $L$. 

\begin{theorem}\thlabel{thm:inductive-step}
Under the assumptions of \thref{thm:main-theorem-introduction}, the two sides of formula \eqref{E:main} stay well-defined and change by the same amount when a crossing change occurs within one of the components of the link.
\end{theorem}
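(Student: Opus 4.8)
The plan is to follow the architecture of the proof of \cite[Theorem 1.1]{Benard-Conway}, checking that the crossing-change analysis given there for $\lk(L_1,L_2)=1$ uses only the locality of the modification within a single component together with the condition $\lk(L_1,L_2)\neq 0$, and never the precise value of the linking number. A crossing change within a single component $L_k$ leaves $\lk(L_1,L_2)$ unchanged, so the hypothesis of \thref{thm:main-theorem-introduction} that the linking number be nonzero is automatically preserved; the well-definedness half of the statement is then the running assumption that $\Delta_L(\omega_1^{\epsilon_1},\omega_2^{\epsilon_2})\neq 0$ holds for both links related by the crossing change. I would first represent $L$ as the closure of a colored braid and isotope it so that the crossing to be changed is a generator $\sigma_i^{\pm1}$ whose two strands carry the same color $k=c_i=c_{i+1}$ (forced, since the crossing lies within one component). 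The crossing change replaces $\sigma_i^{\pm1}$ by $\sigma_i^{\mp1}$, i.e.\ inserts $\sigma_i^{\mp2}$, producing braids $\beta_+$ and $\beta_-$.

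For the Bénard--Conway side I would compare the graphs $\widehat\Gamma_{\beta_+}^{\alpha,c}$ and $\widehat\Gamma_{\beta_-}^{\alpha,c}$ inside $\widehat H_n^{\alpha,c}$ and compute $h_{L_-}-h_{L_+}$ as the difference of their intersection numbers with $\widehat\Lambda_n^{\alpha,c}$. Because both strands lie on the same conjugacy sphere $\mathbb S(\alpha_k)$, the local model for this difference is exactly the two-strand, one-variable model of the Casson--Lin crossing-change formula of Lin \cite{Lin} and Herald \cite{Herald}: a one-parameter family of configurations connecting $\beta_+$ to $\beta_-$ passes through a singular link, and the change in the transverse intersection count is governed entirely by the representation-theoretic data on $\mathbb S(\alpha_k)$, hence by $\omega_k=e^{2\i\alpha_k}$ alone. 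By the known identification of the one-variable invariant with the Tristram--Levine signature, this jump equals minus the single-variable Levine--Tristram signature jump in $\omega_k$.

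For the signature side, take $k=1$ and arrange the crossing change to occur in a part of $S_1$ disjoint from all clasps, so that the clasps encoding the linking with $L_2$ are untouched and only the intrinsic Seifert data of $S_1$ is modified by a small twisted band carrying a new loop $c$. Since $c$ avoids the clasps, its pushoffs $i^{\epsilon}(c)$ depend only on $\epsilon_1$, so writing $B^{\epsilon_1}$ for the common value of $A^{\epsilon_1,+}$ and $A^{\epsilon_1,-}$ in the $c$-entries, the new block of $A(\omega)$ equals $(1-\omega_2)(B^+ - \omega_1 B^-)$. After the Hermitian normalization of \eqref{E:matrix H} the new block of $H(\omega)$ becomes $|1-\omega_2|^2\,(1-\overline\omega_1)(B^+ - \omega_1 B^-)$, and since $|1-\omega_2|^2>0$ for $\omega_2\neq 1$, the resulting signature change is precisely the one-variable Levine--Tristram jump in $\omega_1$ and is \emph{independent} of $\omega_2$. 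Consequently $\sigma_L(\omega_1,\omega_2)$ and $\sigma_L(\omega_1,\omega_2^{-1})$ change by the same amount, and the right-hand side of \eqref{E:main} changes by minus that same Levine--Tristram jump, matching the left-hand side.

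The remaining and principal obstacle is to make the matching of the two jumps rigorous sign for sign. I expect the real work to be bookkeeping: reconciling the orientation conventions used to orient $\widehat\Lambda_n^{\alpha,c}$, $\widehat\Gamma_\beta^{\alpha,c}$ and $\widehat H_n^{\alpha,c}$ with the sign of the Levine--Tristram jump, and verifying that the compactly supported perturbation achieving transversality is compatible with the interpolating family connecting $\beta_+$ to $\beta_-$, so that no intersection points are spuriously created or lost at the walls. All of this is localized to the single color $k$, and the only global quantity, $\lk(L_1,L_2)$, is both preserved and required merely to be nonzero; once every appeal to $\lk=1$ in \cite{Benard-Conway} is replaced by $\lk\neq 0$, the argument carries over without essential change.
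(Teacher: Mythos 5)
There is a genuine gap, and it sits in the step you defer as ``bookkeeping.'' Your plan rests on the claim that both jumps are governed by $\omega_1$ alone --- that the change in $h_L$ is a one-variable Levine--Tristram jump at $\omega_1$, and correspondingly that $\sigma_L(\omega_1,\omega_2)$ and $\sigma_L(\omega_1,\omega_2^{-1})$ change by the same amount. This is false in general. The correct crossing-change formulas (quoted in the paper as \thref{lmm:inductive-lemma} and \thref{cor:inductive-cor}, imported from B\'enard--Conway) read
\[
h_{L_+}(\alpha)-h_L(\alpha)\;=\;\#\big\{\omega\in S_1(\alpha)\;\big|\;\nabla_{L_+}(\omega^{1/2})\,\nabla_L(\omega^{1/2})<0\big\},
\]
while $\sigma_{L_+}(\omega)-\sigma_L(\omega)\in\{0,-2\}$ according to the sign of the same product $\nabla_{L_+}(\omega^{1/2})\nabla_L(\omega^{1/2})$ at the point $\omega$. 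That sign depends on both coordinates and can differ at $(\omega_1,\omega_2)$ and $(\omega_1,\omega_2^{-1})$, so one of the two signatures may drop by $2$ while the other is unchanged; only the symmetrized sum on the right of \eqref{E:main} matches the jump of $h_L$, not each term separately. Your bordered-matrix argument does not give independence of $\omega_2$: even granting that the new row and column of $H(\omega)$ carry the factor $|1-\omega_2|^2(1-\overline\omega_1)(B^+-\omega_1B^-)$, the signature change upon bordering a Hermitian matrix is controlled by the sign of a ratio of full determinants (a Schur-complement quantity --- exactly where $\nabla_{L_+}\nabla_L$ enters), not by the new row and column in isolation; and one cannot in general arrange the new loop to decouple from the clasp data. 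For the same reason the representation-counting jump cannot be localized to the single sphere $\mathbb S(\alpha_1)$.

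Two further points. First, you treat the well-definedness of $h_{L_+}$ as a running assumption, but it is part of what the theorem asserts: the paper derives it from the Torres formula, which shows $\Delta_{L_+}\not\equiv 0$ whenever $\lk(L_1,L_2)\neq 0$ (this is the only place the linking-number hypothesis is used), combined with the device of first taking $\alpha_1,\alpha_2$ transcendental and then removing that restriction by local constancy of $h_L$ in $\alpha$. Second, the paper does not re-derive the crossing-change formulas from a local singular-link model at all; it cites them from B\'enard--Conway, whose proofs of those particular statements never use $\lk(L_1,L_2)=1$, so the entire modification needed for $\lk\neq 0$ is the Torres-formula observation above together with the two-term bookkeeping over $S_1(\alpha)$.
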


The rest of this section will be dedicated to the proof of Theorem \ref{thm:inductive-step}. Given a pair $\alpha = (\alpha_1,\alpha_2) \in (0,\pi)^2$, the set defined in \eqref{E:S} becomes 
\[
S(\alpha)\, =\, \big\lbrace \big(e^{\epsilon_1 2\i\alpha_1}, e^{\epsilon_2 2\i\alpha_2}\big) \; \big| \; \epsilon_1 = \pm 1,\; \epsilon_2 = \pm 1 \big\rbrace.
\]
In addition, define the sets 
\[
S_j(\alpha) = \big\lbrace \big(e^{\epsilon_1 2\i\alpha_1}, e^{\epsilon_2 2\i\alpha_2}\big) \in S(\alpha) \; \big| \; \epsilon_j = 1 \big\rbrace,\;\; j = 1, 2,
\]
and denote by $\nabla_L (t_1,t_2)$ the multivariable Conway potential function. Recall that $\nabla_L (t_1, t_2)$ equals $\Delta_L \big(t_1^2, t_2^2\big)$ up to multiplication by the powers of $\pm t_1$ and $\pm t_2$.

\begin{lemma}\thlabel{lmm:inductive-lemma}
Let $L = L_1 \cup L_2$ be a two-component oriented ordered link and denote by $L_+$ the link obtained from $L$ by a negative crossing change within a component $L_j$ of the link $L$. Suppose that $\alpha = (\alpha_1, \alpha_2) \in (0, \pi)^2$ is such that, for all $\omega = (\omega_1, \omega_2) \in S_j(\alpha)$, one has $\omega_1^2 \ne 1$, $\omega_1^2 \ne 1$ and $\omega_1\omega_2 \ne 1$. If $\nabla_L(\omega^{1/2}) \ne 0$ and $\nabla_{L_+}(\omega^{1/2}) \ne 0$ then
\[
h_{L_+}(\alpha) - h_L(\alpha) = \#\, \big\lbrace \omega \in S_j(\alpha) \; \big| \; \nabla_{L_+}(\omega^{1/2})\, \nabla_L(\omega^{1/2}) < 0\big\rbrace.
\]
\end{lemma}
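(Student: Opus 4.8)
The plan is to track how the Bénard--Conway invariant changes under a single crossing change by interpreting the difference $h_{L_+}(\alpha) - h_L(\alpha)$ as a local contribution coming from the intersection points that appear or disappear in the pillowcase picture. First I would recall that $h_L(\alpha)$ and $h_{L_+}(\alpha)$ are both computed from the same colored braid presentation, in which the crossing change within the component $L_j$ corresponds to replacing one generator $\sigma_i$ by $\sigma_i^{-1}$ (equivalently, inserting a full twist $\sigma_i^2$). Since the two strands at the crossing carry the same color $c_i = c_{i+1} = j$, this is a self-crossing of $L_j$, and the corresponding local model is the comparison of the graphs $\widehat{\Gamma}_{\beta}^{\alpha,c}$ and $\widehat{\Gamma}_{\beta'}^{\alpha,c}$, where $\beta'$ differs from $\beta$ by this local move. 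The difference of the two intersection numbers $\langle \widehat{\Lambda}_n^{\alpha,c}, \widehat{\Gamma}_{\beta}^{\alpha,c}\rangle$ and $\langle \widehat{\Lambda}_n^{\alpha,c}, \widehat{\Gamma}_{\beta'}^{\alpha,c}\rangle$ should then localize to the reducible locus, i.e.\ to those abelian representations that become irreducible (or cease to be so) as the crossing is changed.

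The key step is to identify precisely where the intersection $\widehat{\Lambda}_n^{\alpha,c} \cap \widehat{\Gamma}_{\beta}^{\alpha,c}$ can fail to stay transverse or compact during a homotopy from $\beta$ to $\beta'$: these are exactly the \emph{abelian} (reducible) representations of the link group sending each meridian of color $k$ to a matrix of trace $2\cos(\alpha_k)$. Such a representation factors through $H_1(S^3 - L) = \mathbb{Z}^2$ and is determined by the eigenvalues $\omega_1^{1/2}, \omega_2^{1/2}$, so the relevant reducibles are parameterized exactly by the points of $S_j(\alpha)$ under the constraint $\epsilon_j = 1$ (the crossing change within $L_j$ only alters the $t_j$-variable). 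The condition $\nabla_L(\omega^{1/2}) = 0$ is the standard criterion for the abelian representation indexed by $\omega$ to be deformable into an irreducible one, so the sign of $\nabla_L(\omega^{1/2})$ and $\nabla_{L_+}(\omega^{1/2})$ on either side of the crossing change controls whether an irreducible representation is born or killed there. I would make this rigorous by adapting the spectral-flow / signature-jump argument of Bénard--Conway \cite[Theorem 1.1]{Benard-Conway}: each $\omega \in S_j(\alpha)$ at which $\nabla_{L_+}(\omega^{1/2})$ and $\nabla_L(\omega^{1/2})$ have opposite signs contributes $+1$ to the difference, because the branch of irreducibles emanating from that reducible point crosses the diagonal exactly once, with a definite sign fixed by the chosen orientations.

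The main obstacle will be the \textbf{sign and orientation bookkeeping}: I must verify that every such crossing contributes with the same sign $+1$ (not $-1$), which requires computing the local intersection sign against the fixed orientations on $\widehat{\Lambda}_n^{\alpha,c}$, $\widehat{\Gamma}_{\beta}^{\alpha,c}$, and $\widehat{H}_n^{\alpha,c}$, and matching the convention under which $L_+$ arises from a \emph{negative} crossing change. The hypotheses $\omega_1^2 \neq 1$, $\omega_2^2 \neq 1$, and $\omega_1\omega_2 \neq 1$ are precisely what rule out the degenerate reducibles (central or binary-dihedral obstructions) where the local model of the deformation would fail or the count would be ambiguous, so I would first confirm that under these conditions the reducible locus near each relevant $\omega$ is a single nondegenerate point and the linearization is exactly the eigenvalue sign of $\nabla$. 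Once the sign is pinned down at one model crossing, the general case follows because all self-crossing changes within a component are local and the argument is independent of the global braid. I expect the bulk of the remaining work, including the compatibility of $\nabla_L$ with $\Delta_L$ up to units (which does not affect the sign at the points of $S_j(\alpha)$ since $\omega_i^{1/2}$ are units), to be routine once the orientation computation is settled, so I would isolate that computation as the crux and otherwise follow the structure of the $\lk = 1$ proof verbatim.
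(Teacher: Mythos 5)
Your proposal outlines exactly the wall-crossing strategy behind this lemma, which the paper does not prove itself but quotes directly from B\'enard--Conway \cite[Proposition 5.10 and Remark 5.11]{Benard-Conway}: localize the change of the intersection number at the abelian representations indexed by $S_j(\alpha)$, use the nondegeneracy hypotheses to control the local model there, and read off the birth/death of irreducibles from the sign of the Conway potential. Since your sketch matches the approach of the cited source (with the orientation computation correctly identified as the remaining crux), there is no substantive divergence to report.
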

	
A proof of this lemma can be found in \cite[Proposition 5.10 and Remark 5.11]{Benard-Conway}.
	
\begin{lemma}
Let $L = L_1 \cup L_2$ be an ordered two-component link with $\lk(L_1\cup L_2) \neq 0$ and suppose that $\omega \in (S^1 \setminus \lbrace 1 \rbrace)^2$ is not a root of $\Delta_L(t_1,t_2)$. Then
\[
\sigma_L(\omega)\; \equiv\; 2 + \lk(L_1, L_2) + \sign(\nabla_L(\omega^{1/2})) \; \bmod{4}.
\]
In addition, suppose that $L_+$ is obtained from $L$ by a negative crossing change within one of the components of $L$, and that $\omega$ is not a root of either $\Delta_{L_+}(t_1,t_2)$ or $\Delta_L(t_1,t_2)$. Then $\sigma_{L_+}(\omega) - \sigma_L(\omega)$ is either $0$ or $-2$.
\end{lemma}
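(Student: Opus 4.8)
The plan is to prove the two assertions of the lemma separately, treating the congruence formula first and then deducing the crossing-change statement from it. For the congruence, I would relate the Cimasoni--Florens signature $\sigma_L(\omega)$ to the sign of the Conway potential via the classical relationship between signatures and determinants. Specifically, for a Hermitian matrix $H(\omega)$, its signature and the sign of its determinant are constrained modulo $4$: if $H(\omega)$ has size $N$, then $\sign H(\omega) \equiv N + (\text{something}) \bmod 4$ whenever $\det H(\omega) \neq 0$, because the number of negative eigenvalues determines $\sign H = N - 2(\#\text{negative})$ and the parity of the number of negative eigenvalues is detected by $\sign(\det H)$. I would compute $\det H(\omega)$ explicitly in terms of $\nabla_L(\omega^{1/2})$ using the defining formula \eqref{E:matrix H}, namely $H(\omega) = \prod_i (1 - \overline{\omega}_i) A(\omega)$, and the fact (stated in the excerpt) that $\nabla_L$ agrees with $\Delta_L$ up to units $\pm t_i$. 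The size $N$ of $H(\omega)$ equals the first Betti number of the C-complex $S$, which by a standard Euler-characteristic count is governed by $\lk(L_1, L_2)$ and the genera of the surfaces; tracking this dependence is what produces the $2 + \lk(L_1, L_2)$ term.

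I would carry out the determinant computation as follows. Writing $\omega_i = e^{2\i\alpha_i}$ so that $1 - \overline{\omega}_i$ has a definite argument, one sees that $\det H(\omega) = \prod_i (1 - \overline{\omega}_i)^{N} \det A(\omega)$, and the product of these unit-scaled factors contributes a real sign that can be absorbed. The determinant $\det A(\omega)$ is, up to an explicit monomial unit, the multivariable Alexander polynomial evaluated at $\omega$, hence up to a real positive factor and a controllable sign it equals $\nabla_L(\omega^{1/2})$. The upshot is a congruence $\sign H(\omega) \equiv N + c(\omega) + \sign(\nabla_L(\omega^{1/2})) \bmod 4$ where $c(\omega)$ collects the sign contributions from the unit factors; verifying that $N + c(\omega) \equiv 2 + \lk(L_1,L_2) \bmod 4$ is the bookkeeping core of the argument. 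I expect this is where the detailed input from \cite{Cimasoni-Florens} on the structure of the C-complex Seifert form, together with the precise normalization of $\nabla_L$, must be invoked; one should be able to cite an existing formula relating $\det H(\omega)$ to $\nabla_L$ rather than rederiving it.

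For the second assertion, I would combine the congruence formula with the crossing-change behavior recorded in \thref{lmm:inductive-lemma}. A negative-to-positive crossing change within a single component is realized by adding a band (or handle) to one Seifert surface $S_j$, which changes the C-complex by a local modification that adds one generator to $H_1(S)$ and alters the Seifert form in a controlled rank-one-type way. Consequently $\sigma_{L_+}(\omega) - \sigma_L(\omega)$ can only change by $0$ or $\pm 2$, since such a modification can flip at most one eigenvalue sign; this is the standard effect of a crossing change on a signature. The remaining point is to rule out the value $+2$. Here I would apply the congruence formula to both $L$ and $L_+$: since the crossing change occurs within a single component, $\lk(L_1,L_2)$ is unchanged, so subtracting the two congruences gives $\sigma_{L_+}(\omega) - \sigma_L(\omega) \equiv \sign(\nabla_{L_+}(\omega^{1/2})) - \sign(\nabla_L(\omega^{1/2})) \bmod 4$. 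Combined with \thref{lmm:inductive-lemma}, which shows the Bénard--Conway side increases by a nonnegative count, the monotonicity of the crossing change forces the signature difference to be $0$ or $-2$ rather than $+2$.

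The main obstacle I anticipate is the precise sign and size bookkeeping in the determinant computation of the first part: correctly identifying the Betti number $N$ of the C-complex in terms of $\lk(L_1,L_2)$, and pinning down the sign of $\det A(\omega)$ relative to $\nabla_L(\omega^{1/2})$ including the effect of the normalizing monomial units. Getting the constant right modulo $4$ (why it is exactly $2 + \lk(L_1,L_2)$ and not some other representative) requires care, and is the step most likely to hide a subtle $\pm$ error; I would double-check it against the known case $\lk(L_1,L_2) = 1$ treated in \cite{Benard-Conway} to fix conventions.
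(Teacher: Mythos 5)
The paper does not actually prove this lemma---it is quoted verbatim from B\'enard--Conway and justified by the citation to \cite[Lemma 6.2]{Benard-Conway}---so the comparison is against that cited argument. Your plan for the congruence is the same as theirs in outline (relate $\sign H(\omega)$ to $\sign\det H(\omega)$ modulo $4$ via the parity of the number of negative eigenvalues, then express $\det H(\omega)$ in terms of $\nabla_L(\omega^{1/2})$ using the Cimasoni--Florens determinant formula, with the size $N=b_1(S)$ of the C-complex contributing the $\lk(L_1,L_2)$ term through $b_1(S)\equiv \lk(L_1,L_2)-1 \bmod 2$). You leave all of the sign and normalization bookkeeping to a citation, which is acceptable here since the paper itself does exactly that.

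The second half of your argument has a genuine gap. To rule out $\sigma_{L_+}(\omega)-\sigma_L(\omega)=+2$ you invoke \thref{lmm:inductive-lemma}, which is a statement about the B\'enard--Conway invariant $h_L$, not about $\sigma_L$. The inequality $h_{L_+}(\alpha)-h_L(\alpha)\ge 0$ says nothing about $\sigma_{L_+}(\omega)-\sigma_L(\omega)$ unless one already knows that the jumps of $h$ and of $\sigma$ match---but establishing that matching is precisely the purpose of this lemma and of \thref{cor:inductive-cor}, which feed into the proof of \thref{thm:inductive-step}. As written, your argument is circular (or, read charitably, a non sequitur). The correct way to exclude $+2$ is internal to the Seifert-form side: choose C-complexes for $L$ and $L_+$ that agree outside a ball, so that the Hermitian matrices $H_{L_+}(\omega)$ and $H_L(\omega)$ differ by a rank-one perturbation whose sign is definite (non-positive for a negative-to-positive crossing change, the scalar being essentially $-|1-\overline\omega_j|^2$ times a positive quantity). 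A non-positive rank-one perturbation cannot increase the signature and can decrease it by at most $2$; combined with the congruence from the first part (which forces the difference to be even, since $N$ and $\lk(L_1,L_2)$ are unchanged and both determinants are nonzero), this yields $\sigma_{L_+}(\omega)-\sigma_L(\omega)\in\{0,-2\}$. Relatedly, your description of the crossing change as ``adding one generator to $H_1(S)$'' conflicts with the rank-one-perturbation picture (same basis, one diagonal entry changes) and would by itself allow an odd change of signature; that step needs to be set up consistently.
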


A proof of this lemma can be found in \cite[Lemma 6.2]{Benard-Conway}. The following corollary is an easy consequence of the above lemma.
	
\begin{corollary}\thlabel{cor:inductive-cor}
Let $L$ be an ordered two-component link with $\lk(L_1\cup L_2) \neq 0$ and $L_+$ a link obtained from $L$ by a negative crossing change within one of its components. Suppose that $\omega \in (S^1 \setminus \lbrace 1 \rbrace)^2$ is such that $\nabla_L(\omega^{1/2}) \ne 0$ and $\nabla_{L_+}(\omega^{1/2}) \ne 0$. Then
\[
\sigma_{L_+}(\omega) - \sigma_L(\omega)\; =\; \begin{cases} \quad 0, & \text{if }\; \nabla_{L_+}(\omega^{1 / 2}) \nabla_L(\omega^{1 / 2}) > 0 , \\ \; - 2, & \text{if }\; \nabla_{L_+}(\omega^{1 / 2}) \nabla_L(\omega^{1 / 2}) < 0 . \end{cases}
\]
\end{corollary}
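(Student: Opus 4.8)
The plan is to derive this corollary directly from the preceding lemma, whose two congruence and inequality statements supply exactly the information needed. The corollary asks us to upgrade the modular statement ``$\sigma_{L_+}(\omega) - \sigma_L(\omega)$ is either $0$ or $-2$'' into a precise determination of which of the two values occurs, expressed in terms of the sign of the product $\nabla_{L_+}(\omega^{1/2})\,\nabla_L(\omega^{1/2})$. The key observation is that the congruence formula
\[
\sigma_L(\omega)\; \equiv\; 2 + \lk(L_1, L_2) + \sign(\nabla_L(\omega^{1/2})) \pmod 4
\]
holds verbatim for both $L$ and $L_+$ with the \emph{same} linking number, since a crossing change within a single component does not alter $\lk(L_1, L_2)$.

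First I would subtract the two congruences. Writing the formula for $L_+$ and for $L$ and taking the difference, the constant $2 + \lk(L_1, L_2)$ cancels, yielding
\[
\sigma_{L_+}(\omega) - \sigma_L(\omega)\; \equiv\; \sign(\nabla_{L_+}(\omega^{1/2})) - \sign(\nabla_L(\omega^{1/2})) \pmod 4.
\]
Since the hypotheses guarantee $\nabla_L(\omega^{1/2}) \ne 0$ and $\nabla_{L_+}(\omega^{1/2}) \ne 0$, each sign term is $\pm 1$, so the right-hand difference takes values in $\{-2, 0, 2\}$. Next I would split into the two cases dictated by the sign of the product. If $\nabla_{L_+}(\omega^{1/2})\,\nabla_L(\omega^{1/2}) > 0$, the two signs agree, the right-hand difference is $0$, and the congruence forces $\sigma_{L_+}(\omega) - \sigma_L(\omega) \equiv 0 \pmod 4$. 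If the product is negative, the signs are opposite, so the right-hand difference is $\pm 2$, giving $\sigma_{L_+}(\omega) - \sigma_L(\omega) \equiv 2 \pmod 4$.

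Finally I would combine these congruences with the constraint from the lemma that the actual difference $\sigma_{L_+}(\omega) - \sigma_L(\omega)$ is exactly $0$ or $-2$. In the first case the only element of $\{0, -2\}$ congruent to $0$ modulo $4$ is $0$; in the second case the only element congruent to $2$ modulo $4$ is $-2$. This pins down the value uniquely in each case and completes the proof. I do not anticipate any serious obstacle here: the corollary is genuinely ``an easy consequence'' in that the hard analytic content—the congruence formula and the $\{0,-2\}$ dichotomy—is already established in the lemma, and the remaining work is purely the bookkeeping of reconciling a modular identity with a two-element value set. The only point requiring a word of care is noting that the linking number is preserved under a self-crossing change, so that the constant term cancels cleanly when the congruences are subtracted.
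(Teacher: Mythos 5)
Your proposal is correct and is precisely the argument the paper intends: the paper dismisses this corollary as ``an easy consequence'' of the preceding lemma, and your derivation---subtracting the two congruences (noting the linking number is unchanged by a self-crossing change), reducing to the sign difference modulo $4$, and reconciling with the $\{0,-2\}$ dichotomy---fills in exactly that omitted bookkeeping.
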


We are now ready to sketch a proof of \thref{thm:inductive-step}, which will be a straightforward modification of the proof of Theorem 6.4 in \cite{Benard-Conway}. That theorem is first proved under the additional assumption that $\alpha_1$ and $\alpha_2$ are transcendental. This assumption is then removed by showing that the invariant $h_L (\alpha)$ is locally constant in $\alpha$. Only the first part of the proof needs to be modified.

Observe first that a crossing change within one component of $L$ does not make the multivariable Alexander polynomial vanish, which ensures that the B{\'e}nard--Conway invariant of $L_+$ is well-defined. This follows from the Torres formula \cite{Torres}
\[
\Delta_{L_1\cup L_2} (t_1,1)\;\; \dot{=}\;\; \frac{t_1^{\,\lk(L_1,L_2)} - 1}{t_1 - 1} \cdot \Delta_{L_1} (t_1)
\]
and our assumption that $\lk (L_1,L_2) \neq 0$. Next, we will show that the two sides of the equation \eqref{E:main} change by the same amount under the crossing change. Assume without loss of generality that $L_+$ is obtained from $L = L_1 \cup L_2$ by a negative crossing change in $L_1$. According to \thref{lmm:inductive-lemma}, we have
\[
h_{L_+}(\alpha) - h_L(\alpha) = \# \big\lbrace \omega \in S_1(\alpha) \; \big| \; \nabla_{L_+}(\omega^{1 / 2}) \nabla_L(\omega^{1 / 2}) < 0\big\rbrace.
\]
Note that $S_1(\alpha)$ has exactly two elements, $(\omega_1, \omega_2)$ and $(\omega_1, \omega_2^{-1})$, where $\omega_1 = e^{2\i\alpha_1}$ and $\omega_2 = e^{2\i\alpha_2}$. It now follows from \thref{cor:inductive-cor} that
\[
\begin{aligned}
h_{L_+}(\alpha) - h_L(\alpha) &= -\, \frac{1}{2}\,(\sigma_{L_+}(\omega_1, \omega_2) - \sigma_L(\omega_1, \omega_2))\, -\, \frac{1}{2}\,(\sigma_{L_+}(\omega_1, \omega_2^{-1}) - \sigma_L(\omega_1, \omega_2^{-1})) \\ &= -\, \frac{1}{2}\,(\sigma_{L_+}(\omega_1, \omega_2)+\sigma_{L_+}(\omega_1, \omega_2^{-1}))\, +\, \frac{1}{2}\,(\sigma_L(\omega_1, \omega_2)+\sigma_L(\omega_1, \omega_2^{-1})),
\end{aligned}
\]
which completes the proof. 


\section{$\SU(2)$ representations of torus links}\label{sec:count-rep}
To complete the proof of Theorem \thref{thm:main-theorem-introduction}, it is sufficient to verify the formula \eqref{E:main} for $(2,2\ell)$--torus links $L_{\ell}$ with $\ell \neq 0$. Our convention here is that $\ell > 0$ gives the right--handed torus link and $\ell < 0$ the left--handed one. The verification will take up the rest of the paper. We begin in this section by describing the irreducible $\SU(2)$ representations of the link group of $L_{\ell}$ with fixed meridional traces.


\subsection{Geometry of $\SU(2)$}
We will continue to identify $\SU(2)$ matrices with unit quaternions as in \eqref{E:sp1}. Any $q \in \SU(2)$ can then be written in the form $q = \cos(\alpha) + \sin(\alpha) Q = e^{\alpha Q}$, where $\alpha \in [0, \pi]$ and $Q$ is a purely-imaginary unit quaternion. This expression is unique except when $q = \pm 1$. We will refer to $2 \cos(\alpha)$ as the trace of $q$ and to $\cos(\alpha)$ as the real part of $q$. Using arbitrary unit quaternions, $q = \cos(\alpha) + \sin(\alpha) Q$ can be conjugated to $\cos(\alpha) + \sin(\alpha) \i = e^{\alpha \i}$. Using only unit complex numbers, $q = \cos(\alpha) + \sin(\alpha) Q$ can be conjugated to $\cos(\alpha) + \sin(\alpha) \big(\cos(\beta) \i + \sin(\beta) \j \,\big)$ for some $\beta \in [0, \pi]$. Alternatively, it can be expressed as $\cos(\gamma) (\cos(\beta) + \sin(\beta) \i) + \sin(\gamma) \j$ for $\gamma \in [0, \pi]$ and $\beta \in [0,2 \pi]$ with $\cos(\gamma) \cos(\beta) = \cos(\alpha)$ since the trace is conjugation invariant.


\subsection{Counting the representations}\label{S:reps}
Let us first assume that $\ell > 0$ and consider the following presentation of the link group
\[
\pi_1(S^3 \setminus L_{\ell}) = \big\langle\, x_1, x_2\; \big|\; (x_1 x_2)^{\ell} = (x_2x_1)^{\ell} \,\big\rangle,
\]
where $x_1$ and $x_2$ are the meridians of the two components of $L_{\ell}$. For a fixed choice of $(\alpha_1, \alpha_2) \in (0, \pi)^2$, we wish to describe the conjugacy classes of irreducible representations $\rho: \pi_1(S^3 \setminus L_{\ell}) \to \SU(2)$ sending the meridians of the components of $L_{\ell}$ to matrices with the respective traces $2\cos(\alpha_j)$, $j = 1, 2$.

Since $(x_2 x_1)^{\ell} = (x_2 x_1)^{\ell} x_2 x_2^{-1} = x_2 (x_1 x_2)^{\ell} x_2^{-1}$, the relation $(x_1 x_2)^{\ell} = (x_2x_1)^{\ell}$ is equivalent to $(x_1 x_2)^{\ell}$ commuting with $x_2$. Therefore, we are looking for non-commuting unit quaternions $\rho(x_1)$ and $\rho(x_2)$ with prescribed traces such that $(\rho(x_1) \rho(x_2))^{\ell}$ commutes with $\rho(x_2)$. 

Conjugate $\rho$ so that $\rho(x_2) = \cos(\alpha_2) + \sin(\alpha_2) \i$ and $\rho(x_1) = \cos(\alpha_1) + \sin(\alpha_1) (\cos(\phi) \i + \sin(\phi) \j)$ for some $\phi \in (0, \pi)$. Since $\rho(x_2) \neq \pm 1$, the quaternions $(\rho(x_1) \rho(x_2))^{\ell}$ and $\rho(x_2)$ commute only if the former is a complex number. In turn, this implies that $(\rho(x_1) \rho(x_2))^{\ell} = \pm 1$ because otherwise $\rho(x_1)\rho(x_2)$ is a complex number and $\rho$ is reducible. 

We end up looking for non-commuting unit quaternions $\rho(x_1)$ and $\rho(x_2)$ with prescribed traces such that $\rho(x_1) \rho(x_2)$ is an $\ell$--th root of $\pm 1$ (different from $\pm 1$ because otherwise $\rho$ is again reducible). The latter condition means that the real part of $\rho(x_1) \rho(x_2)$ equals $\cos(\pi m/\ell)$ or, equivalently, 
\begin{equation}\label{eq:solution-phi}
 \cos(\alpha_1) \cos(\alpha_2) - \sin(\alpha_1) \sin(\alpha_2) \cos(\phi)\; = \; \cos(\pi m/\ell), \quad m = 1, 2, \dots, \ell - 1.
 \end{equation}

\begin{proposition}\label{P:solution-phi}
Given a $(2, 2 \ell)$--torus link $L_{\ell}$ with $\ell \neq 0$ and a choice of $(\alpha_1, \alpha_2) \in (0, \pi)^2$, the number of the conjugacy classes of irreducible representations $\pi_1 (S^3 \setminus L_{\ell}) \to \SU(2)$ equals the number of integers $m \in \{1,\ldots, |\ell| -1\}$ such that 
\[
\cos(\pi m/\ell) \in [\,\cos(\alpha_1 + \alpha_2), \cos(\alpha_1 - \alpha_2)\,].
\]
\end{proposition}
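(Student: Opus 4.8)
The plan is to convert the group-theoretic count into a transparent counting of solutions of the single scalar equation \eqref{eq:solution-phi}. Assume first that $\ell > 0$. The preceding discussion already conjugates every relevant representation into the normal form $\rho(x_2) = \cos\alpha_2 + \sin\alpha_2\,\i$ and $\rho(x_1) = \cos\alpha_1 + \sin\alpha_1(\cos\phi\,\i + \sin\phi\,\j)$, and shows that irreducibility forces $\rho(x_1)\rho(x_2)$ to have real part $\cos(\pi m/\ell)$ for some $m \in \{1,\ldots,\ell-1\}$. I would first confirm that $\phi$ is a genuine, well-defined, complete conjugacy invariant. The only conjugations preserving $\rho(x_2)$ are by the centralizer $\{e^{t\i}\}$ of $\i$, and these act on the part of the imaginary quaternion $\rho(x_1)$ orthogonal to $\i$ by the rotation $\j \mapsto \cos 2t\,\j + \sin 2t\,\k$. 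This residual freedom lets one normalize the $\k$--component to zero and the $\j$--component to be nonnegative, pinning down $\phi \in [0,\pi]$ uniquely, with $\phi \in (0,\pi)$ occurring exactly for the non-abelian (irreducible) representations. Equivalently, one may invoke the standard fact that an irreducible pair in $\SU(2)$ is determined up to conjugacy by the triple $(\tr\rho(x_1), \tr\rho(x_2), \tr\rho(x_1)\rho(x_2))$, the last entry being $2\cos(\pi m/\ell)$.

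Next I would solve \eqref{eq:solution-phi}. Since $\alpha_1,\alpha_2 \in (0,\pi)$ forces $\sin\alpha_1\sin\alpha_2 > 0$, the equation has the unique solution
\[
\cos\phi \;=\; \frac{\cos\alpha_1\cos\alpha_2 - \cos(\pi m/\ell)}{\sin\alpha_1\sin\alpha_2},
\]
which produces an admissible $\phi \in (0,\pi)$ precisely when the right-hand side lies in $(-1,1)$. Clearing the positive denominator and rewriting the two resulting inequalities by the addition formulas $\cos(\alpha_1\pm\alpha_2) = \cos\alpha_1\cos\alpha_2 \mp \sin\alpha_1\sin\alpha_2$ turns the condition into $\cos(\alpha_1+\alpha_2) < \cos(\pi m/\ell) < \cos(\alpha_1-\alpha_2)$. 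The values $\cos(\pi m/\ell)$ for $m = 1,\ldots,\ell-1$ are distinct and each admissible $m$ yields exactly one class, distinct $m$ giving distinct classes since the real part of the product is a conjugacy invariant; hence the number of conjugacy classes equals the number of $m \in \{1,\ldots,\ell-1\}$ in this interval.

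The endpoints require a separate word, and this is the one delicate point. The boundary values $\cos(\pi m/\ell) = \cos(\alpha_1\mp\alpha_2)$ force $\cos\phi = \pm 1$, i.e. $\phi \in \{0,\pi\}$, for which $\rho(x_1)$ and $\rho(x_2)$ both lie in the complex line and commute, so the representation is abelian and contributes no irreducible class. The exact count is therefore governed by the \emph{open} interval, and this coincides with the closed-interval formula in the statement exactly when no $\cos(\pi m/\ell)$ equals $\cos(\alpha_1\pm\alpha_2)$; these boundary coincidences are precisely the meridional data at which the relevant Alexander values vanish, and so are excluded under the hypotheses of \thref{thm:main-theorem-introduction}. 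I would make this matching explicit rather than gloss over it.

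Finally, the case $\ell < 0$ follows formally. The left-handed link $L_\ell$ is the mirror image of the right-handed $L_{|\ell|}$, whose complement has an isomorphic fundamental group; since both the irreducibility condition and the meridional trace conditions are insensitive to this identification (traces are unchanged under $X \mapsto X^{-1}$), the two links carry the same number of conjugacy classes of irreducible representations with fixed traces. As $\cos(\pi m/\ell) = \cos(\pi m/|\ell|)$ and the index range becomes $m \in \{1,\ldots,|\ell|-1\}$, the interval criterion is literally unchanged. I expect the main obstacle to be not the trigonometry but the conjugacy bookkeeping of the first paragraph: verifying that $\phi$ is a complete invariant and that the $\pm 1$ and endpoint exclusions are handled so that no class is double-counted or spuriously counted.
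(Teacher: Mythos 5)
Your proposal is correct and follows essentially the same route as the paper: both reduce the count to solving the scalar equation \eqref{eq:solution-phi} for $\phi$ and observe that a solution in $(0,\pi)$ exists, uniquely, exactly when $\cos(\pi m/\ell)$ lies between $\cos(\alpha_1+\alpha_2)$ and $\cos(\alpha_1-\alpha_2)$ (the paper substitutes $t=\cos^2(\phi/2)$ and views the left side as a convex combination where you solve for $\cos\phi$ directly, an immaterial difference). Your additional care about the endpoints --- where $\phi\in\{0,\pi\}$ yields only abelian representations, so the honest criterion is the open interval, the discrepancy with the paper's closed interval arising only at the excluded roots of the Alexander polynomial --- and your explicit check that $\phi$ is a complete conjugacy invariant are points the paper leaves implicit, but they do not alter the argument.
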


\begin{proof}
Let us assume that $\ell > 0$ since the argument for $\ell < 0$ is similar. Use trigonometric identities to express \eqref{eq:solution-phi} in terms of $\alpha_1 - \alpha_2$ and $\alpha_1 + \alpha_2$ as $\cos (\alpha_1 - \alpha_2) \sin^2 (\phi/2) + \cos (\alpha_1 + \alpha_2) \cos^2 (\phi/2)\; = \; \cos(\pi m/\ell)$ or, equivalently,
\[
\cos (\alpha_1 - \alpha_2) (1-t) + \cos (\alpha_1 + \alpha_2)\, t\; = \; \cos(\pi m/\ell),
\] 
where $t = \cos^2 (\phi/2) \in [0,\pi]$. The latter equation has a unique solution $t \in [0,1]$, and therefore the equation \eqref{eq:solution-phi} has a unique solution $\phi \in (0,\pi)$, if and only if $\cos(\pi m/\ell) \in [\,\cos(\alpha_1-\alpha_2), \cos(\alpha_1 + \alpha_2)\,]$. 
\end{proof} 

\begin{example}
The cases of $\ell = 1$ and $\ell = -1$ correspond to the two oriented Hopf links. In both cases, the link group has no irreducible $\SU(2)$ representations. The former case served as the base of induction in \cite{Benard-Conway}. The count of conjugacy classes for $\ell = 3$ is depicted in \autoref{fig:count-rep}. The diamond shapes in the figure have to do with the Alexander polynomial of the link as discussed in detail in the following subsection.
\end{example}
	
\begin{figure}[!ht]
\centering
\includegraphics[width=0.6\textwidth]{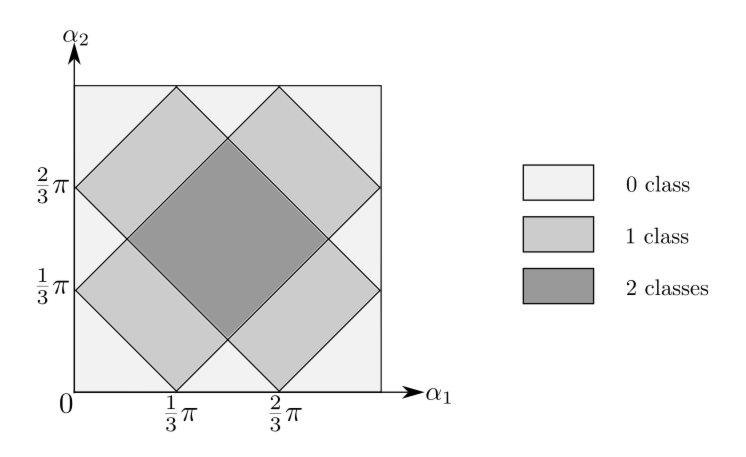}
\caption{Counting conjugacy classes, $\ell = 3$}
\label{fig:count-rep}
\end{figure}


\subsection{The Alexander polynomial}
The number of the conjugacy classes of representations in Proposition \ref{P:solution-phi} jumps with the  change in $\alpha_1$ and $\alpha_2$. The following proposition shows that these jumps occur at the roots of the multivariable Alexander polynomial, where the B{\'e}nard--Conway invariant is not defined.

\begin{proposition}\thlabel{prop:non-vanishing-condition}
Let $L_{\ell}$ be a $(2,2\ell)$--torus link with $\ell \neq 0$ and $\Delta_{L_{\ell}}(t_1,t_2)$ its multivariable Alexander polynomial. Given $(\alpha_1,\alpha_2) \in (0,\pi)^2$, let $(\omega_1, \omega_2) = (e^{2 \i \alpha_1}, e^{2 \i \alpha_2})$. Then $\Delta_{L_{\ell}}(\omega_1^{\pm 1}, \omega_2^{\pm 1}) = 0$ exactly when 
\begin{equation}\label{E:roots}
\alpha_1 + \alpha_2 = \frac {\pi m}{| \ell |}\; \text{ and }\; \alpha_1 - \alpha_2 + \pi = \frac{\pi m}{| \ell |}\;\text{ with }\; 0 < m < |\ell| \;\text{ and }\; |\ell| < m < 2 | \ell |.
\end{equation}
\end{proposition}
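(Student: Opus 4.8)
The plan is to reduce the question to a completely explicit computation: first pin down the multivariable Alexander polynomial of $L_\ell$, and then read off its zero set on the torus parameterized by $(\alpha_1,\alpha_2)$.

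I would begin by computing $\Delta_{L_\ell}$ from the presentation $\pi_1(S^3\setminus L_\ell)=\langle x_1,x_2\mid (x_1x_2)^\ell=(x_2x_1)^\ell\rangle$ by Fox calculus, taking $\ell>0$ for definiteness. Writing the relator as $r=(x_1x_2)^\ell(x_2x_1)^{-\ell}$ and abelianizing via $x_i\mapsto t_i$, a direct computation of the two Fox derivatives gives
\[
\frac{\partial r}{\partial x_1}\;\doteq\;\frac{(t_1t_2)^\ell-1}{t_1t_2-1}\,(1-t_2),\qquad
\frac{\partial r}{\partial x_2}\;\doteq\;\frac{(t_1t_2)^\ell-1}{t_1t_2-1}\,(t_1-1).
\]
Dividing the entry $\partial r/\partial x_2$ of the $1\times 2$ Alexander matrix by $(t_1-1)$ (or $\partial r/\partial x_1$ by $(t_2-1)$) then yields
\[
\Delta_{L_\ell}(t_1,t_2)\;\doteq\;\frac{(t_1t_2)^{\ell}-1}{t_1t_2-1}\;=\;1+t_1t_2+\cdots+(t_1t_2)^{\ell-1}.
\]
The case $\ell<0$ follows immediately, since $L_\ell$ is then the mirror of $L_{|\ell|}$ and passing to the mirror replaces $t_i$ by $t_i^{-1}$, which alters $\Delta$ only by a unit; hence $\Delta_{L_\ell}\doteq((t_1t_2)^{|\ell|}-1)/(t_1t_2-1)$ for every $\ell\neq 0$.

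The decisive point is that, up to units, $\Delta_{L_\ell}$ is a one-variable polynomial in $s=t_1t_2$, namely $\prod_{\zeta^{|\ell|}=1,\,\zeta\neq 1}(s-\zeta)$. Consequently $\Delta_{L_\ell}$ vanishes at a point of the torus exactly when $s=t_1t_2$ is a $|\ell|$-th root of unity different from $1$; the apparent pole at $s=1$ is harmless because there the polynomial form takes the value $|\ell|\neq 0$. Substituting $t_1=\omega_1^{\epsilon_1}$, $t_2=\omega_2^{\epsilon_2}$ with $\omega_j=e^{2\i\alpha_j}$ turns $s$ into $e^{2\i(\epsilon_1\alpha_1+\epsilon_2\alpha_2)}$, and I would record that this is a nontrivial $|\ell|$-th root of unity precisely when $\epsilon_1\alpha_1+\epsilon_2\alpha_2\in(\pi/|\ell|)\,\mathbb Z$ while $\epsilon_1\alpha_1+\epsilon_2\alpha_2\notin\pi\,\mathbb Z$.

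Finally I would unwind this angular condition over the four sign choices. Since $s$ is a root of unity if and only if $\bar s$ is, the choices collapse to the two quantities $\alpha_1+\alpha_2\in(0,2\pi)$ and $\alpha_1-\alpha_2\in(-\pi,\pi)$. For the sum this gives $\alpha_1+\alpha_2=\pi m/|\ell|$ with the constraint $\alpha_1+\alpha_2\neq\pi$ excluding $m=|\ell|$, so $m$ runs over $0<m<|\ell|$ together with $|\ell|<m<2|\ell|$. For the difference, rewriting $\alpha_1-\alpha_2=\pi m/|\ell|-\pi$, i.e.\ $\alpha_1-\alpha_2+\pi=\pi m/|\ell|$, moves it into $(0,2\pi)$, and the exclusion of $\alpha_1-\alpha_2=0$ again removes $m=|\ell|$, leaving the same range for $m$; this is exactly the form in which the proposition states the two families of lines. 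The only genuinely delicate bookkeeping is keeping the Fox-calculus normalization and the unit/sign ambiguities straight and matching the excluded index $m=|\ell|$ to the locus $s=1$ in each of the two cases, after which the remaining verification is routine trigonometry.
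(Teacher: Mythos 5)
Your argument is correct and follows essentially the same route as the paper: identify $\Delta_{L_{\ell}}(t_1,t_2)\doteq\bigl((t_1t_2)^{|\ell|}-1\bigr)/(t_1t_2-1)$ and then translate the condition ``$\omega_1^{\epsilon_1}\omega_2^{\epsilon_2}$ is a nontrivial $|\ell|$-th root of unity'' into the two families of lines in $(\alpha_1,\alpha_2)$. The only difference is that the paper simply cites Milnor for the Alexander polynomial of the $(2,2\ell)$--torus link, whereas you derive it by Fox calculus from the given presentation, which is a correct (and self-contained) substitute.
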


\begin{proof}
According to Milnor \cite{Milnor2}, the Alexander polynomial of $L_{\ell} $ is given by the formula 
\medskip
\[
\Delta_{L_{\ell}}(t_1,t_2)\;\; \dot{=}\;\; \frac{(t_1 t_2)^{|\ell|} - 1}{t_1 t_2 - 1}.
\]

\medskip\noindent
Therefore, $\Delta_{L_{\ell}}(\omega_1^{\epsilon_1}, \omega_2^{\epsilon_2}) = 0$ exactly when $\omega_1^{\epsilon_1} \omega_2^{\epsilon_2}$ is an $|\ell|$--th root of unity that is not $1$, which is equivalent to $\epsilon_1 \alpha_1 + \epsilon_2 \alpha_2\, =\, \pi m/|\ell|$ for some $0 < m < |\ell|$, up to addition or subtraction of $\pi$. The result now follows. 
\end{proof}
	
\begin{remark}
When $\ell = 0$, the torus link $L_{\ell}$ is just the unlink of two components. Its multivariable Alexander polynomial is identically zero hence its B{\'e}nard--Conway invariant is not defined. This is the reason behind our assumption that $\ell \neq 0$.
\end{remark}
	
	
\section{The pillowcase and intersection theory}\label{sec:five}
In this section, we will compute the B{\'e}nard--Conway invariants of $(2,2\ell)$--torus links as the intersection number of the manifolds $\widehat{\Lambda}_n^{\alpha, c}$ and $\widehat{\Gamma}_{\beta}^{\alpha,c}$ inside of $\widehat{H}_n^{\alpha, c}$. This will involve counting the representations described in Proposition \ref{P:solution-phi} with plus/minus signs, after making sure that the intersection in question is transversal. 


\subsection{The setup}
Let $L_{\ell}$ be a $(2,2\ell)$--torus link and assume that $\ell$ is positive. The case of negative $\ell$ can be treated in a similar manner, and both cases will be discussed in detail when we perform explicit calculations in Section \ref{sec:sign}. Let $\alpha = (\alpha_1, \alpha_2) \in (0, \pi)^2$ be such that $\alpha_1 + \alpha_2 \ne {\pi m}/{\ell}$ and $\alpha_1 - \alpha_2 + \pi \ne {\pi m}/{\ell}$ for any $0 < m < \ell$ and $\ell < m < 2\ell$. This condition guarantees that the B{\'e}nard--Conway invariant of $L_{\ell}$ is well-defined; see Proposition \ref{prop:non-vanishing-condition}. We will view the link $L_{\ell}$ as a $2$-colored link which is the closure of the $2$--colored braid $\beta = \sigma_1^{2 \ell} \in B_c$ with the $2$--coloring $c = (1,2)$. The B{\'e}nard--Conway invariant of $L_{\ell}$ is then the intersection number of $\widehat{\Lambda}_2^{\alpha, c}$ and $\widehat{\Gamma}_{\beta}^{\alpha,c}$ inside of $\widehat{H}_2^{\alpha, c}$. Our first goal will be to parameterize these manifolds.


\subsection{The pillowcase}
Recall that $\widehat{H}_2^{\alpha, c}$ is an open 2-manifold obtained by removing the conjugacy classes of reducible representations from the orbifold $H_2^{\alpha, c}/\SO(3)$, where
\[
H_2^{\alpha, c}\, =\, \big\lbrace(X_1, X_2, Y_1, Y_2)\in R_2^{\alpha, c} \times R_2^{\alpha, c} \; \big| \; X_1 X_2 = Y_1 Y_2 \big\rbrace.
\]
In the special case of $\alpha_1 = \alpha_2 = \pi/2$, the orbifold $H_2^{\alpha, c}/\SO(3)$ is referred to as a pillowcase; see for instance Lin \cite{Lin}. We will extend this terminology to the general case. 

After conjugation, we may assume that $X_2 = e^{\alpha_2 \i}$ and $X_1 = e^{\alpha_1 P_1}$, where $P_1 = \i e^{- \k \phi}$ with $\phi \in [0, \pi]$; see Section \ref{S:reps}. Write $Y_1 = e^{\alpha_1 Q_1}$ and $Y_2 = e^{\alpha_2 Q_2}$, where $Q_1$ and $Q_2$ are purely imaginary unit quaternions. For any given $Y_1$, the equation $X_1 X_2 = Y_1 Y_2$ can be uniquely solved for $Y_2$ if and only if the trace of $Y_1^{- 1} X_1 X_2$ matches that of $Y_2$, that is,
\[
\Re \big(e^{- \alpha_1 Q_1}\cdot e^{\alpha_1 \i e^{- \k \phi}} \cdot e^{\alpha_2 \i}\big)\; =\; \cos (\alpha_2).
\]
Write $Q_1 = u \i + v \j + w \k$, where $u^2 + v^2 + w^2 = 1$, then the above equation is equivalent to
\begin{align}
\big( \sin(\alpha_1) \cos(\alpha_2) \cos(\phi) + \cos(\alpha_1) \sin(\alpha_2) \big) & u \label{eq:plane-Q1}  \\
+ \sin(\alpha_1) \cos(\alpha_2) \sin(\phi) & v \notag \\
-  \sin(\alpha_1) \sin(\alpha_2) \sin(\phi) & w = \sin(\alpha_1) \cos(\alpha_2) + \cos(\alpha_1) \sin(\alpha_2) \cos(\phi).\notag
\end{align}

We will handle the case of $\phi \in (0, \pi)$ first. In this case, equation \eqref{eq:plane-Q1} has the form $\n \cdot (u, v, w) = d$, where $d\, =\, \sin(\alpha_1) \cos(\alpha_2) + \cos(\alpha_1) \sin(\alpha_2) \cos(\phi)$ and
\medskip
\[
\n\, =\, \begin{pmatrix}
\sin(\alpha_1) \cos(\alpha_2) \cos(\phi) + \cos(\alpha_1) \sin(\alpha_2) \\
\sin(\alpha_1) \cos(\alpha_2) \sin(\phi) \\
- \sin(\alpha_1) \sin(\alpha_2) \sin(\phi)\quad
\end{pmatrix}
\] 

\medskip\noindent
is a non-zero vector (because its third coordinate is never zero). Therefore, equation \eqref{eq:plane-Q1} describes a plane in the space $\mathbb{R}^3$ of purely imaginary quaternions, and our task becomes describing the intersection of this plane with the unit sphere $S^2$ of purely imaginary unit quaternions given by the equation $u^2 + v^2 + w^2 = 1$. Note that the point on the plane closest to the origin is given by $d \n / | \n |^2$. One can easily see that the distance from this point to the origin equals 
\begin{equation}\label{E:dist}
\frac d {|\n|}\; = \; \frac{d}{\sqrt{\,d^2 + \sin^2 (\alpha_2) \sin^2 (\phi)}}
\end{equation}

\smallskip\noindent
and that this distance is strictly less than one, making the intersection of the plane with the unit sphere $S^2$ a circle $S^1_{\phi}$ for any choice of $\phi \in (0,\pi)$. Therefore, away from the points with $\phi = 0$ and $\phi = \pi$, the orbifold $H_2^{\alpha, c}/\SO(3)$ is homeomorphic to a cylinder $S^1 \times (0,\pi)$. It can be parameterized as follows. 

\begin{figure}[!ht]
		\centering
		\includegraphics[width=0.9\textwidth]{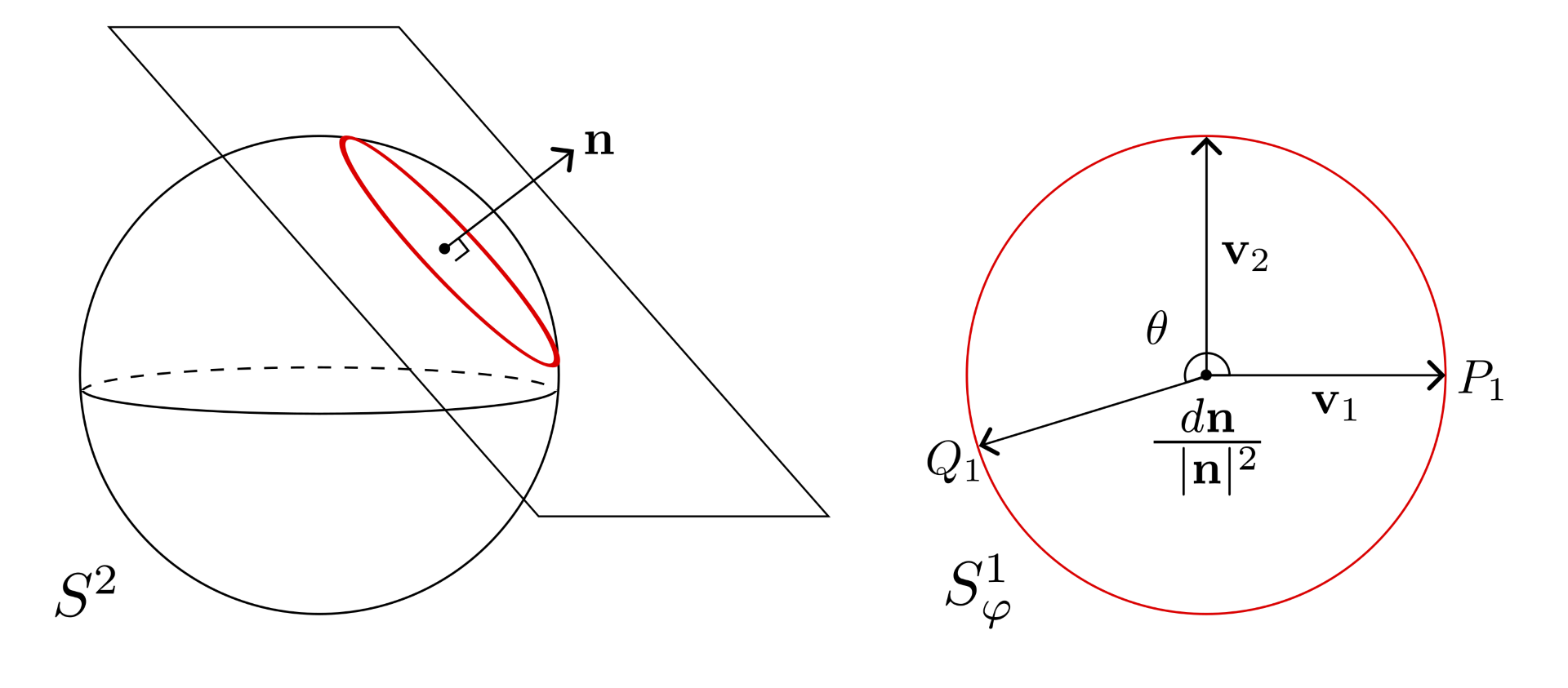}
		\caption{Vectors $\v_1$, $\v_2$ and points $P_1$, $Q_1$ on the circle $S^1_{\phi}$}
		\label{fig:P1-Q1}
	\end{figure}

Note that the circle $S^1_{\phi}$ contains the vector $P_1 = \i e^{- \k \phi} =\cos(\phi) \i + \sin(\phi) \j$, which corresponds to the solution with $(X_1,X_2,X_1,X_2)$ of the equation \eqref{eq:plane-Q1} . Let 
\[
\v_1\; =\; P_1 - \frac{d \n}{\;\, |\n|^2}\quad\text{and}\quad \v_2\; =\; \frac{\n}{|\n|}\; \times\; \v_1.
\]
 Since the vectors $\v_1$ and $\v_2$ are orthogonal to each other, we can write 
 \begin{equation}\label{E:param}
 Q_1\; =\; \frac{d \n}{\;\,|\n|^2}\, +\, \cos \theta\cdot \v_1\, +\, \sin \theta \cdot \v_2
 \end{equation}
for some $\theta \in [0, 2 \pi]$; see \autoref{fig:P1-Q1}. Therefore, away from the points with $\phi = 0$ and $\phi = \pi$, the cylinder $H_2^{\alpha, c}/\SO(3)$ is parameterized by $(\phi,\theta) \in (0, \pi)\, \times\, [0, 2 \pi]$ with the intervals $\theta = 0$ and $\theta = 2 \pi$ identified; see \autoref{fig:hat-H2}. 
\bigskip

\begin{figure}[!ht]
\centering
\includegraphics[width=0.6\textwidth]{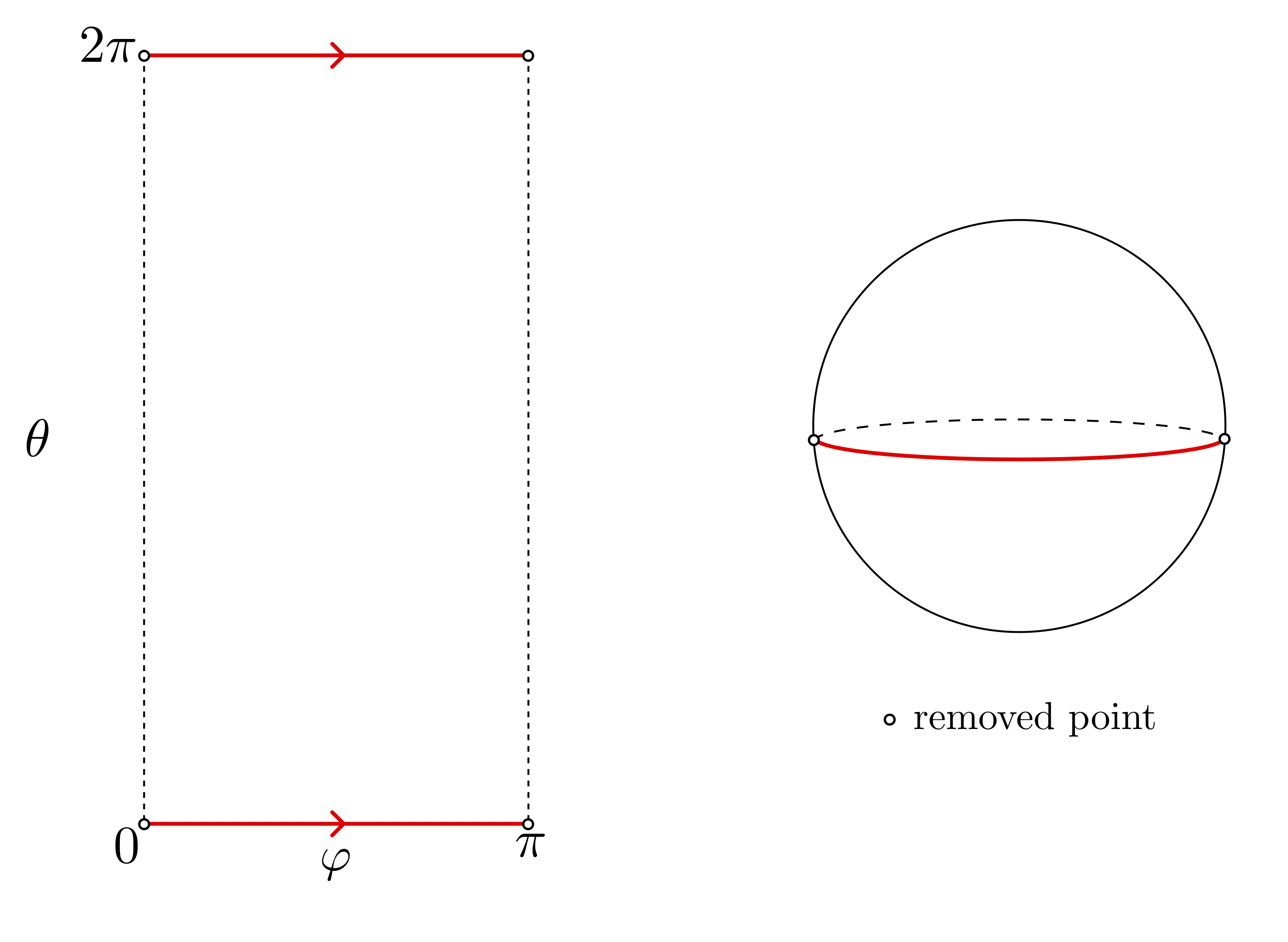}
\caption{Parametrization by $(\phi,\theta)$}
\label{fig:hat-H2}
\end{figure}

We now turn to the full space $H_2^{\alpha, c}/\SO(3)$. It is a compactification of the cylinder described above by the points with $\phi = 0$ and $\phi = \pi$. The type of compactification one obtains depends on $\alpha_1$ and $\alpha_2\colon$

\begin{figure}[!ht]
\includegraphics[width=0.6\textwidth]{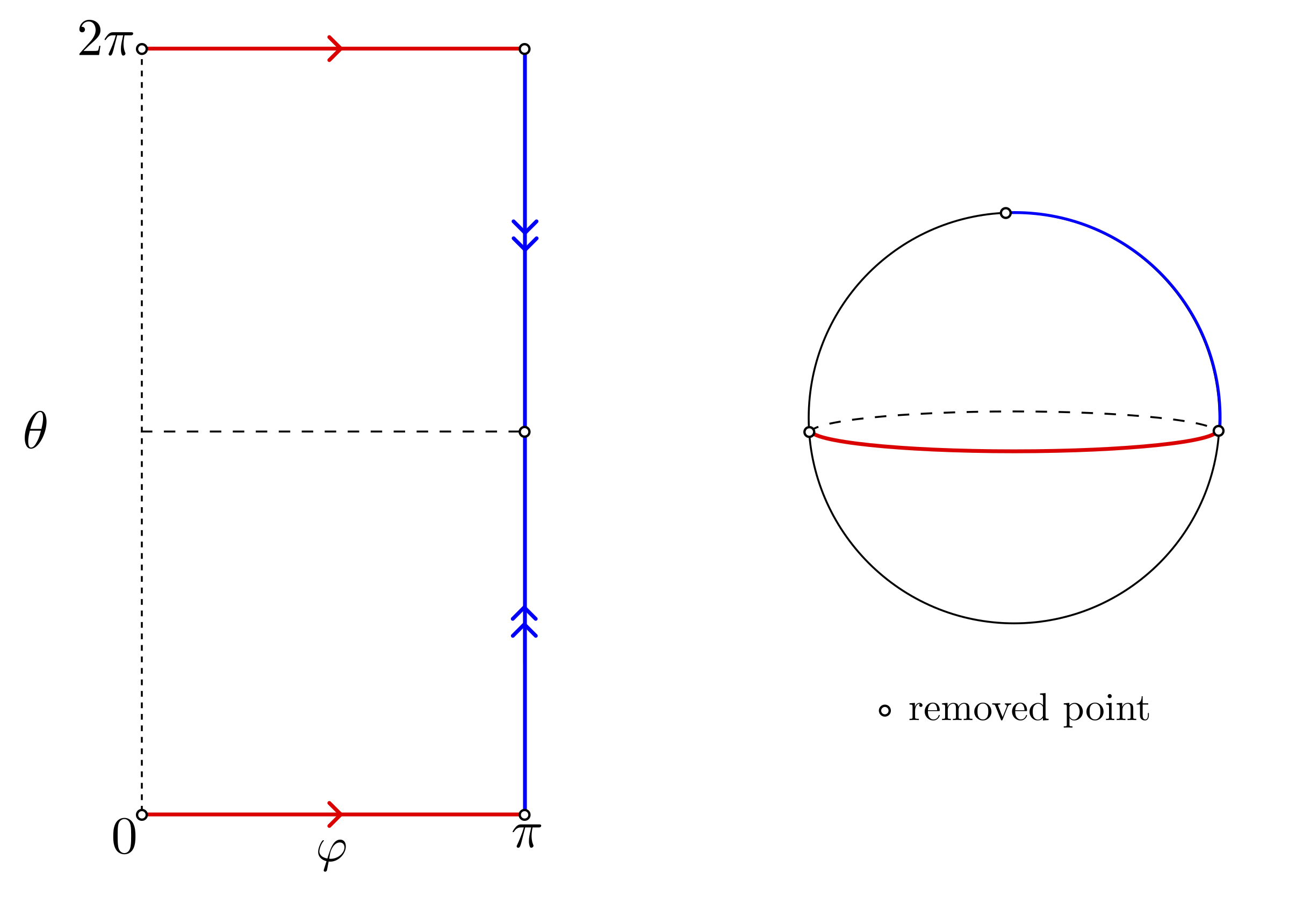}
\caption{Special case when $\alpha_1 = \alpha_2 \ne \pi / 2$}
\label{fig:hat-H2-special}
\end{figure}

\smallskip \textbf{(1)}\; For a generic choice of $\alpha_1$ and $\alpha_2$ with $\sin(\alpha_1 \pm \alpha_2) \neq 0$,  the above calculation extends to the points with $\phi = 0$ and $\phi = \pi$ giving rise in each case to a single reducible representation. In this case, the cylinder compactifies to a 2-sphere, and $\widehat{H}_2^{\alpha, c}$ is a 2-sphere with two points removed; see \autoref{fig:hat-H2}.

\smallskip \textbf{(2)}\; The case of $\alpha_1 = \alpha_2 \ne \pi / 2$, in which $\sin(\alpha_1 - \alpha_2) = 0$ but $\sin(\alpha_1 + \alpha_2) \ne 0$, was investigated in \cite{Heusener-Kroll}. In this case, all representations with $\phi = \pi$ and $\theta \in (0, 2 \pi)$ are irreducible and the points $(\pi,\theta)$ and $(\pi,2\pi - \theta)$ are identified; see \autoref{fig:hat-H2-special}. Topologically, $\widehat{H}_2^{\alpha, c}$ is a 2-sphere with three points removed. 

\smallskip \textbf{(3)}\; In the special case of $\alpha_1 \ne \alpha_2$ and $\alpha_1 + \alpha_2 = \pi$ we have $\sin(\alpha_1 + \alpha_2) = 0$ but $\sin(\alpha_1 - \alpha_2) \ne 0$. This time around, the representations with $\phi = 0$ and $\theta \in (0, 2 \pi)$ are irreducible and $\widehat{H}_2^{\alpha, c}$ is a mirror image of \autoref{fig:hat-H2-special}. 

\smallskip \textbf{(4)}\; The remaining case of $\alpha_1 = \alpha_2 = \pi / 2$ is the original case investigated by Lin \cite{Lin}. The representation with $\phi = 0$ and $\phi = \pi$ are now irreducible for all $\theta \in (0, 2 \pi)$. The resulting $\widehat{H}^{\alpha, c}_2$ is a sphere with four points removed.

\smallskip
All of the resulting orbifolds $H_2^{\alpha, c}/\SO(3)$ will be referred to as pillowcases. The calculation that follows will be the same for all of them because it happens away from the points with $\phi = 0$ and $\phi = \pi$.


\subsection{Intersection theory in the pillowcase}
The diagonal $\widehat{\Lambda}_2^{\alpha, c}$ is a subspace of $\widehat{H}_2^{\alpha, c}$ given by the equation $X_1 = Y_1$. In our parameterization of the pillowcase, $\widehat{\Lambda}_2^{\alpha, c}$ is then exactly the subspace where $P_1 = Q_1$ or $\theta = 0$. It is shown in red in \autoref{fig:hat-H2}. 

Next, we will parametrize $\widehat{\Gamma}_{\beta}^{\alpha,c}$. Since $\theta$ is the angle between the vectors $P_1 - d\n/|\n|^2$ and $Q_1 - d\n/|\n|^2$, we have 
\[
\cos(\theta) = \frac{(P_1 - d\n/|\n|^2) \cdot (Q_1 - d\n/|\n|^2)}{1 - d^2/|\n |^2}.
\]	
Using formula \eqref{E:dist}, this simplifies to 
\begin{equation}\label{eq:parametrize-Gamma-1}
\cos(\theta)\; =\; \frac{(|\n|^2 P_1 - d\n) \cdot Q_1}{\sin^2(\alpha_2) \sin^2(\phi)}.
\end{equation}

\smallskip\noindent
The lemmas that follow simplify this formula further and eventually lead to Theorem \ref{T:chebyshev}, which identifies the right hand side of \eqref{eq:parametrize-Gamma-1} as a Chebyshev polynomial. 

\begin{lemma}\label{L:one}
The right hand side of formula \eqref{eq:parametrize-Gamma-1} is a polynomial in $\cos (\phi)$, which will be denoted by $P(\cos(\phi))$.
\end{lemma}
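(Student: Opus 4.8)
The plan is to trace the non-polynomial quantity $\sin\phi$ through the numerator of \eqref{eq:parametrize-Gamma-1} and to show that it enters only through $\sin^2\phi$, which is then cancelled by the denominator and rewritten via $\sin^2\phi = 1 - \cos^2\phi$. The geometric engine is the observation that the normal vector $\n$ is, up to scale, the imaginary part of the product $X_1 X_2$. Writing $X_1 X_2 = \cos\psi + \sin\psi\,U$ with $U$ a purely imaginary unit quaternion and $\psi \in (0,\pi)$, a direct multiplication of $X_1 = e^{\alpha_1 P_1}$ and $X_2 = e^{\alpha_2 \i}$ shows that $\sin\psi\,U = \n$; in particular $|\n| = \sin\psi$, the real part gives $\cos\psi = \cos\alpha_1\cos\alpha_2 - \sin\alpha_1\sin\alpha_2\cos\phi$ (affine in $\cos\phi$), and a one-line computation yields $\n\cdot P_1 = d$, so that $U\cdot P_1 = d/\sin\psi$.

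First I would compute the image $Y_1 = X_1\beta$ of the braid $\beta = \sigma_1^{2\ell}$. A short induction from the action \eqref{eq:group-action}, using $(X_1,X_2)\sigma_1 = (X_1X_2X_1^{-1}, X_1)$, shows that $\sigma_1^2$ acts by conjugation by $X_1X_2$, whence $Y_1 = (X_1X_2)^{\ell}\,X_1\,(X_1X_2)^{-\ell}$. Since $(X_1X_2)^{\ell} = \cos(\ell\psi) + \sin(\ell\psi)\,U$, conjugation by it rotates the imaginary part of $X_1$ about the axis $U$ by the angle $2\ell\psi$. Therefore the purely imaginary unit quaternion $Q_1$, the direction of the imaginary part of $Y_1$, equals $R_{U,2\ell\psi}(P_1)$, and Rodrigues' formula gives $Q_1 = \cos(2\ell\psi)\,P_1 + \sin(2\ell\psi)\,(U\times P_1) + (1-\cos(2\ell\psi))(U\cdot P_1)\,U$.

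The key simplification is to rewrite the numerator vector as $|\n|^2 P_1 - d\,\n = \sin^2\psi\,P_1 - \sin^2\psi\,(U\cdot P_1)\,U = \sin^2\psi\,\big(P_1 - (U\cdot P_1)U\big)$, namely $\sin^2\psi$ times the projection of $P_1$ onto the plane orthogonal to $U$. This projection is orthogonal both to $U$ and to $U\times P_1$, so dotting it against $Q_1$ annihilates the $\sin(2\ell\psi)$ term of Rodrigues' formula, which is precisely the term carrying an odd (hence non-polynomial) power of $\sin\psi$. Only the $\cos(2\ell\psi)$ term survives, and using $(U\cdot P_1)^2 = d^2/\sin^2\psi$ together with $\sin^2\psi - d^2 = |\n|^2 - d^2 = \sin^2\alpha_2\sin^2\phi$ from \eqref{E:dist}, the numerator collapses to $\cos(2\ell\psi)\cdot\sin^2\alpha_2\sin^2\phi$. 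Dividing by $\sin^2\alpha_2\sin^2\phi$ leaves $\cos(2\ell\psi) = T_{2\ell}(\cos\psi)$, and since $\cos\psi$ is affine in $\cos\phi$ this is a polynomial $P(\cos\phi)$, as claimed.

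I expect the main obstacle to be the cancellation step: one must justify cleanly that the only non-polynomial contribution is the $\sin(2\ell\psi)$ term and that it dies against the projected numerator, and then verify that the residual scalar factor is exactly the denominator $\sin^2\alpha_2\sin^2\phi$ rather than merely a multiple of it. Care is also needed to fix the sign of $U$ (so that $\n = +\sin\psi\,U$ with $\sin\psi > 0$ on $\phi \in (0,\pi)$) and to confirm $\sin\psi \neq 0$ there, making the division by $\sin\psi$ legitimate. Once these are in place the Chebyshev identification of Theorem \ref{T:chebyshev} is immediate, but for the present lemma only the polynomiality is required.
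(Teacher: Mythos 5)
Your proposal is correct, and it takes a genuinely different and more conceptual route than the paper. The paper proves the lemma by brute-force bookkeeping: it expands $|\n|^2 P_1 - d\mspace{1.5mu}\n$ and $Q_1$ in coordinates, establishes by induction that $(X_1X_2)^{\ell} = U_{\ell} + \sin(\phi)\,V_{\ell}\,\j$ with $U_{\ell}, V_{\ell}$ polynomial in $\cos(\phi)$, and checks that every component carries the right power of $\sin(\phi)$ so that the dot product is $\sin^2(\phi)$ times a polynomial; the identification with the Chebyshev polynomial then requires two further lemmas (degree and leading coefficient; boundary values at $\phi = 0,\pi$) plus a root-multiplicity count and an analyticity argument. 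You instead observe that $\n$ is exactly the imaginary part of $X_1X_2$, so $\n = \sin(\psi)\,U$ with $\cos(\psi) = \cos(\alpha_1)\cos(\alpha_2) - \sin(\alpha_1)\sin(\alpha_2)\cos(\phi)$, that $Q_1$ is the Rodrigues rotation of $P_1$ about the axis $U$ by angle $2\ell\psi$, and that $|\n|^2P_1 - d\mspace{1.5mu}\n = \sin^2(\psi)\bigl(P_1 - (U\cdot P_1)U\bigr)$ is $\sin^2(\psi)$ times the projection of $P_1$ orthogonal to $U$; the dot product then kills the $U\times P_1$ term and yields $\cos(2\ell\psi)\bigl(\sin^2(\psi) - d^2\bigr) = T_{2\ell}(\cos\psi)\sin^2(\alpha_2)\sin^2(\phi)$. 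I have checked the key identities ($|\n|^2 = \sin^2\psi$, $\n\cdot P_1 = d$, $\sin^2\psi - d^2 = \sin^2(\alpha_2)\sin^2(\phi)$, and the orthogonality that annihilates the odd-in-$\sin\psi$ term) and they all hold; the nondegeneracy $\sin\psi \neq 0$ on $\phi\in(0,\pi)$ follows from $\sin^2\psi = d^2 + \sin^2(\alpha_2)\sin^2(\phi) > 0$, as you note. What your approach buys is substantial: it proves Lemma \ref{L:one} and Theorem \ref{T:chebyshev} in one stroke, renders Lemmas \ref{L:two} and \ref{L:four} immediate corollaries, and explains conceptually why a Chebyshev polynomial appears at all (it is $\cos(2\ell\psi)$ for the rotation angle $2\ell\psi$). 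What the paper's approach buys is only that it is entirely coordinate-level and requires no geometric interpretation of the quaternion algebra. The one point to make fully explicit in a write-up is the sign convention $\n = +\sin(\psi)\,U$ with $\psi\in(0,\pi)$, which you already flag.
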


\begin{proof}
We will proceed by simplifying \eqref{eq:parametrize-Gamma-1} while keeping track of its dependence on $\phi$. By a direct calculation,
\begin{equation}\label{E:P}
|\n|^2 P_1 - d \n\; =\; \begin{pmatrix}
			\sin^2(\phi) \cdot A_1\; \\
			\sin(\phi) \cdot A_2 \\
			\sin(\phi) \cdot A_3
\end{pmatrix},
\end{equation}

\medskip\noindent
where $A_1$, $A_2$ and $A_3$ are real valued polynomials in $\cos(\phi)$ of degrees $\deg A_1 = 1$, $\deg A_2 = 2$ and $\deg A_3 = 1$. To compute $Q_1$, recall that $Y_1$ is the image of $X_1$ under the action of the braid $\beta = \sigma_1^{2 \ell}$, where $(X_1, X_2)\,\sigma_1 = (X_1 X_2 X_1^{- 1}, X_1)$; see Section \ref{sec:invariant-def}. Therefore, 
\[
(X_1, X_2)\,\sigma_1^{2 \ell} \, = \, \big( (X_1 X_2)^{\ell} \cdot X_1 \cdot (X_1 X_2)^{- \ell}, (X_1 X_2)^{\ell} \cdot X_2 \cdot (X_1 X_2)^{- \ell} \big)
\]
and 
\[
Q_1 = (X_1 X_2)^{\ell} \cdot P_1 \cdot (X_1 X_2)^{- \ell}.
\]
Write
\begin{multline*}
X_1 X_2 \, = \, (\cos(\alpha_1) + \sin (\alpha_1)\cos(\phi)\i + \sin(\alpha_1)\sin(\phi)\j)\mspace{1mu}(\cos(\alpha_2) +\sin(\alpha_2) \i)\, = \\ 
(\cos(\alpha_1) \cos(\alpha_2) - \sin(\alpha_1) \sin(\alpha_2) \cos (\phi)) +
(\cos(\alpha_1) \sin(\alpha_2) + \sin(\alpha_1) \cos(\alpha_2) \cos (\phi))\i \\
+\, \sin(\alpha_1)\cos (\alpha_2) \sin (\phi)\j - \sin(\alpha_1)\sin(\alpha_2)\sin(\phi)\k 
\, =\,  U_1 + \sin(\phi)\cdot V_1 \j,
\end{multline*}
where $U_1$ and $V_1$ are complex valued polynomials in $\cos(\phi)$ of degrees $\deg U_1 = 1$ and $\deg V_1 = 0$. Using an induction on $\ell$ one can easily see that 
\[
(X_1 X_2)^{\ell} \, = \, U_{\ell}\, +\, \sin(\phi)\cdot V_{\ell}\, \j,
\]
where $U_{\ell}$ and $V_{\ell}$ are complex valued polynomials in $\cos(\phi)$ of degrees $\deg U_{\ell} = \ell$ and $\deg V_{\ell} = \ell - 1$. A straightforward calculation then shows that
\begin{equation}\label{E:Q}
Q_1 = \begin{pmatrix}
\;\qquad\quad\; B_1 \\
\;\sin(\phi) \cdot B_2 \\
\;\sin(\phi) \cdot B_3
\end{pmatrix},
\end{equation}
where $B_1$, $B_2$ and $B_3$ are real valued polynomials in $\cos(\phi)$ of degrees $\deg B_1 = 2\ell + 1$, $\deg B_2 = 2\ell$ and $\deg B_3 = 2\ell$. By taking the dot product of \eqref{E:P} and \eqref{E:Q} we conclude that the numerator of \eqref{eq:parametrize-Gamma-1} has the form $\sin^2(\phi)$ times a polynomial in $\cos(\phi)$ of degree at most $2\ell + 2$. The factors of $\sin^2(\phi)$ in the numerator and the denominator of \eqref{eq:parametrize-Gamma-1} cancel, thereby finishing the proof.
\end{proof}

\begin{lemma}\label{L:two}
The polynomial $P(\cos(\phi))$ has degree $2\ell$ and its leading coefficient equals $2^{2\ell -1} \sin^{2\ell} (\alpha_1) \sin^{2\ell}(\alpha_2)$.
\end{lemma}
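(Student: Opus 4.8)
The plan is to exploit a geometric simplification that short-circuits the coefficient bookkeeping begun in \autoref{L:one}. First I would observe that the vector $\n$ appearing in the plane equation \eqref{eq:plane-Q1} is nothing but the imaginary part of $X_1 X_2$: comparing the expansion of $X_1 X_2$ used in the proof of \autoref{L:one} with the displayed formula for $\n$, the $\i$, $\j$, $\k$ components agree term by term, so $\Im(X_1 X_2) = \n$. Writing $X_1 X_2 = \cos(\psi) + \sin(\psi)\,\hat\n$ with $\hat\n = \n/|\n|$ and $\psi \in (0,\pi)$ (here $|\n| > 0$ because $\phi \in (0,\pi)$ and $\sin(\alpha_2) \ne 0$), we then have $\cos(\psi) = \Re(X_1 X_2) = \cos(\alpha_1)\cos(\alpha_2) - \sin(\alpha_1)\sin(\alpha_2)\cos(\phi)$, a degree-one polynomial in $\cos(\phi)$ with leading coefficient $-\sin(\alpha_1)\sin(\alpha_2)$.

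Second, since $Q_1 = (X_1 X_2)^{\ell} P_1 (X_1 X_2)^{-\ell}$, conjugation by $(X_1 X_2)^{\ell}$ acts on the purely imaginary quaternions as the rotation about the axis $\hat\n$ through the angle $2\ell\psi$. This rotation fixes $\hat\n$, hence preserves every plane with normal $\n$, in particular the plane $\n\cdot(u,v,w) = d$ and therefore the circle $S^1_{\phi}$ together with its centre $d\n/|\n|^2$. Consequently the angular coordinate $\theta$ of \eqref{E:param}, which records the rotation carrying $P_1$ to $Q_1$ inside $S^1_{\phi}$, equals $2\ell\psi$ up to orientation, so that the left-hand side of \eqref{eq:parametrize-Gamma-1} is $\cos(\theta) = \cos(2\ell\psi)$ regardless of orientation. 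In other words, the polynomial $P$ of \autoref{L:one} is $P(\cos(\phi)) = \cos(2\ell\psi) = T_{2\ell}(\cos(\psi))$, the Chebyshev polynomial of the first kind evaluated at $\cos(\psi)$.

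From here the statement is immediate. The Chebyshev polynomial $T_{2\ell}$ has degree $2\ell$ with leading coefficient $2^{2\ell - 1}$, and $\cos(\psi)$ is a genuine degree-one polynomial in $\cos(\phi)$, its leading coefficient $-\sin(\alpha_1)\sin(\alpha_2)$ being nonzero because $\alpha_1,\alpha_2 \in (0,\pi)$. Composing, $P(\cos(\phi)) = T_{2\ell}(\cos(\psi))$ has degree $2\ell$ and leading coefficient
\[
2^{2\ell - 1}\bigl(-\sin(\alpha_1)\sin(\alpha_2)\bigr)^{2\ell} = 2^{2\ell - 1}\sin^{2\ell}(\alpha_1)\sin^{2\ell}(\alpha_2),
\]
as claimed.

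The main obstacle is conceptual rather than computational: one must spot that $\Im(X_1 X_2) = \n$ and that this forces the rotation to preserve the circle $S^1_{\phi}$, which is exactly what collapses the two-variable trigonometry into a single Chebyshev polynomial in $\cos(\psi)$. I should flag that this argument essentially proves the later \autoref{T:chebyshev}; so if instead one wants to keep \autoref{L:two} strictly within the computational framework of \autoref{L:one}, the plan would be to expand the dot product of \eqref{E:P} and \eqref{E:Q} and track the coefficients of $\cos^{2\ell + 2}(\phi)$ and $\cos^{2\ell + 1}(\phi)$. There the hard part is showing that both of these top coefficients cancel, which needs the two highest coefficients of $U_{\ell}$ and $V_{\ell}$ rather than just their leading terms; these are obtained by solving the recursion $U_{\ell + 1} = U_{\ell}U_1 - \sin^2(\phi)\,V_{\ell}\overline{V_1}$ and $V_{\ell + 1} = U_{\ell}V_1 + V_{\ell}\overline{U_1}$, whose companion matrix is singular with nonzero trace, so that the required coefficients propagate through powers of a rank-one map. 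The geometric route avoids this delicate cancellation entirely.
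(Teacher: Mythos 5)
Your proof is correct, and it takes a genuinely different and in fact stronger route than the paper. The paper proves Lemma \ref{L:two} by brute force: it expands the dot product of \eqref{E:P} and \eqref{E:Q}, tracks the degrees and leading terms of the polynomials $A_i$, $B_i$, $U_{\ell}$, $V_{\ell}$ through the auxiliary identity \eqref{E:UV2}, and extracts the leading coefficient from the leading term of $\Im(V_{\ell})$. Your key observation --- that $\n = \Im(X_1X_2)$, so that $X_1X_2 = \cos(\psi) + \sin(\psi)\hat\n$ and conjugation by $(X_1X_2)^{\ell}$ is the rotation by $2\ell\psi$ about $\hat\n$, which fixes the centre $d\n/|\n|^2$ and preserves the circle $S^1_{\phi}$ --- is sound (one checks directly that $uvu^{-1} = \cos(2\gamma)v + \sin(2\gamma)\,\hat n \times v$ for $u = e^{\gamma\hat n}$ and $v \perp \hat n$, which even matches the paper's orientation convention $\v_2 = \hat\n \times \v_1$ in \eqref{E:param}, giving $\theta = 2\ell\psi$ on the nose). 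This yields $P(\cos(\phi)) = T_{2\ell}(\cos(\alpha_1)\cos(\alpha_2) - \cos(\phi)\sin(\alpha_1)\sin(\alpha_2))$ outright, i.e.\ the full statement of Theorem \ref{T:chebyshev}, from which Lemma \ref{L:two} (and Lemma \ref{L:four}) follow by inspection of $T_{2\ell}$. What your approach buys is the elimination of the delicate coefficient bookkeeping in the paper's proofs of Lemmas \ref{L:two} and \ref{L:four}, as well as the root-counting and analytic-continuation argument the paper uses to establish Theorem \ref{T:chebyshev}; what it costs is only the small geometric verifications above, which you have supplied. Your closing remark correctly identifies the hard point of the purely computational alternative (the cancellation of the $\cos^{2\ell+2}(\phi)$ and $\cos^{2\ell+1}(\phi)$ terms), but given that the geometric argument is complete, that route is not needed.
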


\begin{proof}
We will follow the proof of the previous lemma but make our calculations more precise. One can check using induction on $\ell$ that
\begin{equation}\label{E:UV2}
\i\mspace{1mu} \overline U_{\ell}\, = \, \cos(\phi) \cdot V_{\ell}\, +\, \i \cot(\alpha_1)\cdot V_{\ell} + \ldots
\end{equation}
where the dots stand for a polynomial in $\cos(\phi)$ of degree at most $\ell - 2$. A tedious but direct calculation of the polynomials in formulas \eqref{E:P} and \eqref{E:Q} then yields the following formulas\,:
\begin{align*}
A_1\; & = \quad \sin^2(\alpha_1) \sin^2(\alpha_2) \cos\,(\phi) - \sin(\alpha_1)\sin(\alpha_2)\cos(\alpha_1)\cos(\alpha_2)\\
A_2\; & = - \sin^2(\alpha_1) \sin^2(\alpha_2) \cos^2(\phi) + \sin(\alpha_1)\sin(\alpha_2)\cos(\alpha_1)\cos(\alpha_2)\cos(\phi) + \sin^2(\alpha_2) \\
A_3\; & = \quad \sin(\alpha_1) \cos(\alpha_1) \sin^2(\alpha_2)\cos(\phi) + \sin^2(\alpha_1)\sin(\alpha_2)\cos(\alpha_2) 
\end{align*}
and 
\begin{align*}
B_1\, & =\, \big[\,\big(|U_{\ell}|^2 + |V_{\ell}|^2 \cos^2(\phi)\big)\cos(\phi) - 2\Im \big(U_{\ell} \overline V_{\ell}\big) \cos^2(\phi)\,\big]_{2 \ell + 1} \\ & \tag*{$-\;\big[\, |V_{\ell}|^2 \cos(\phi) - 2\Im \big(U_{\ell} \overline V_{\ell}\big)\, \big]_{2\ell - 1}$\qquad} \\
B_2\, & =\, \big[- \Re \big(V_{\ell}^2\big)\cos^2(\phi) + 2\Im \big(U_{\ell} V_{\ell}\big) \cos(\phi) +  \Re \big(U_{\ell}^2\big)\,\big]_{2 \ell}\quad + \big[\,\Re \big(V_{\ell}^2\big)\,\big]_{2\ell - 2}\\
B_3\, & =\, \big[- \Im \big(V_{\ell}^2\big) \cos^2 (\phi) - 2 \Re \big (U_{\ell}V_{\ell}\big)\cos(\phi) +  \Im \big(U_{\ell}^2\big)\,\big]_{2\ell - 1} + \big[\,\Im \big(V_{\ell}^2\big)\,\big]_{2\ell - 2}
\end{align*}
The brackets in the formulas for $B_1$, $B_2$ and $B_3$ contain polynomials whose degrees are indicated by the subscripts. Note that the first bracket in the formula for $B_3$ has degree $2\ell - 1$ rather than $2\ell$, which follows from \eqref{E:UV2}. Another lengthy calculation shows that
\begin{align*}
A_1 B_1 + A_2 B_2 + A_3 B_3\, =\,  2\sin^2(\alpha_1) \sin^2(\alpha_2) \cos^2 (\phi) \big(\Im \big(U_{\ell}\big) - \cos(\phi) \Re \big(V_{\ell}\big) & \big)^2 \\ 
\qquad  - \, 2 \sin(\alpha_1) \sin(\alpha_2) \cos(\alpha_1) \cos(\alpha_2) \cos (\phi) \big(\Im \big(U_{\ell}\big) - \cos(\phi) \Re \big(V_{\ell}\big)&  \big)^2  \\
 \qquad +\, 2 \sin^2 (\alpha_1) \cos^2(\phi) \big( \Im(V_{\ell}) & \big)^2 +\, \ldots
\end{align*}
where the dots stand for a polynomial in $\cos(\phi)$ of degree at most $2\ell - 1$. Using formula \eqref{E:UV2}, this further simplifies to 
\[
A_1 B_1 + A_2 B_2 + A_3 B_3\, =\,2 \cos^2(\phi) \sin^2(\alpha_2) \big(\Im(V_{\ell})\big)^2 +\, \ldots
\]
which is a polynomial of degree $2\ell$. It follows that $P(\cos(\phi))$ is a polynomial of degree $2\ell$ whose  leading term matches that of the polynomial $2 \cos^2(\phi) \big(\Im \big(V_{\ell}\big)\big)^2 $. Another induction shows that the leading term of  $\Im\big( V_{\ell} \big)$ equals 
\[
(-1)^{\ell} \cdot 2^{\ell - 1} \cdot \cos^{\ell - 1}(\phi)\sin^{\ell}(\alpha_1)\sin^{\ell}(\alpha_2),
\]
which completes the proof. 
\end{proof}

\begin{lemma}\label{L:four}
The polynomial $P(\cos(\phi))$ evaluates to, respectively, $\cos(2 \ell(\alpha_1 + \alpha_2))$ and $\cos(2\ell(\alpha_1 - \alpha_2))$ at $\phi = 0$ and $\phi = \pi$.
\end{lemma}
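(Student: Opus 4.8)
The plan is to establish the sharper identity $P(\cos(\phi)) = \cos(2\ell\psi)$ for every $\phi \in (0,\pi)$, where $\psi = \psi(\phi) \in (0,\pi)$ is determined by $X_1 X_2 = e^{\psi\widehat{\n}}$ with $\widehat{\n} = \n/|\n|$; Lemma \ref{L:four} then drops out by letting $\phi \to 0$ and $\phi \to \pi$. The point of departure is the formula $Q_1 = (X_1 X_2)^{\ell}\, P_1\, (X_1 X_2)^{-\ell}$ recalled in the proof of Lemma \ref{L:one}, which exhibits $Q_1$ as the image of $P_1$ under conjugation by $(X_1 X_2)^{\ell}$. Note that the real part of $X_1 X_2$ equals $\cos(\alpha_1)\cos(\alpha_2) - \sin(\alpha_1)\sin(\alpha_2)\cos(\phi)$, so that $\cos(\psi) \to \cos(\alpha_1 + \alpha_2)$ as $\phi \to 0$ and $\cos(\psi) \to \cos(\alpha_1 - \alpha_2)$ as $\phi \to \pi$.

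First I would identify this conjugation as an honest rotation of the space of purely imaginary quaternions. Reading off the imaginary part of $X_1 X_2$ from the computation in the proof of Lemma \ref{L:one} and comparing it with the displayed formula for $\n$, one sees that the two coincide; hence the axis of $X_1 X_2$ is $\widehat{\n}$ and, since its real part is $\cos(\psi)$, we have $X_1 X_2 = e^{\psi\widehat{\n}}$, so that conjugation by $(X_1 X_2)^{\ell} = e^{\ell\psi\widehat{\n}}$ is the rotation $R$ about $\widehat{\n}$ through the angle $2\ell\psi$. Because the axis $\widehat{\n}$ is orthogonal to the plane cutting out $S^1_{\phi}$, the rotation $R$ fixes the center $c = d\,\n/|\n|^2$ of $S^1_{\phi}$ (which lies on the axis) and preserves that plane, acting on it as a planar rotation by $2\ell\psi$; in particular it carries $P_1$ to $Q_1$ rigidly along $S^1_{\phi}$. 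Since $\v_2 = \widehat{\n}\times\v_1$, the parametrization \eqref{E:param} records the oriented angle from $\v_1 = P_1 - c$ to $Q_1 - c$ in exactly the orientation fixed by $\widehat{\n}$, so that $Q_1 - c = R\,\v_1 = \cos(2\ell\psi)\,\v_1 + \sin(2\ell\psi)\,\v_2$. Comparing with \eqref{E:param} yields $\theta = 2\ell\psi$ and therefore $P(\cos(\phi)) = \cos(\theta) = \cos(2\ell\psi)$ throughout $(0,\pi)$.

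It remains to pass to the limits $\phi \to 0^+$ and $\phi \to \pi^-$. By Lemma \ref{L:one} the left hand side is a polynomial in $\cos(\phi)$, hence extends continuously to $\cos(\phi) = \pm 1$, while the right hand side is manifestly continuous in $\phi$; taking limits gives $P(1) = \cos(2\ell(\alpha_1 + \alpha_2))$ and $P(-1) = \cos(2\ell(\alpha_1 - \alpha_2))$, where the evenness and $2\pi$--periodicity of the cosine absorb the branch ambiguity in $\psi$ that arises when $\alpha_1 + \alpha_2 > \pi$. I expect the main obstacle to be the bookkeeping of the middle step: verifying that the rotation axis is precisely $\widehat{\n}$ and fixing the orientation so that the planar rotation angle is identified with $+\theta$ rather than $-\theta$, together with the fact that the identity is proved only on the open interval where $S^1_{\phi}$ is a genuine circle and must then be pushed to the degenerate endpoints where the circle collapses to a point — a gap that the continuity supplied by Lemma \ref{L:one} closes. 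A purely computational alternative, substituting $\cos(\phi) = \pm 1$ into the explicit polynomials $A_i$ and $B_i$ by way of the boundary values of $U_{\ell}$ and $V_{\ell}$, is available but noticeably messier.
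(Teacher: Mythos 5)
Your argument is correct, and it takes a genuinely different --- and in fact stronger --- route than the paper. The paper proves Lemma \ref{L:four} by brute force: it evaluates the polynomials $A_1,A_2,A_3$ at $\phi=0$, derives the boundary behaviour of $(X_1X_2)^{\ell}$ via the expansion \eqref{eq:X1X2_power}, reads off $B_1,B_2,B_3$, and finishes with a ``tedious but straightforward trigonometric calculation'' showing $A_1B_1+A_2B_2+A_3B_3=\sin^2(\alpha_2)\cos(2\ell(\alpha_1+\alpha_2))$. Your observation that $\operatorname{Im}(X_1X_2)$ coincides with the normal vector $\n$ of the plane cutting out $S^1_{\phi}$ is correct (one checks directly against the displayed formulas, and it also follows from rewriting the trace condition as $\sin(\alpha_1)\, Q_1\cdot\operatorname{Im}(X_1X_2)=\cos(\alpha_2)-\cos(\alpha_1)\operatorname{Re}(X_1X_2)$), so $X_1X_2=e^{\psi\widehat\n}$ and conjugation by $(X_1X_2)^{\ell}$ is the rotation about $\widehat\n$ by $2\ell\psi$, which preserves the plane, fixes the center $c$, and hence rotates $P_1$ to $Q_1$ along $S^1_{\phi}$; the conclusion $\cos(\theta)=\cos(2\ell\psi)=T_{2\ell}(\cos(\alpha_1)\cos(\alpha_2)-\cos(\phi)\sin(\alpha_1)\sin(\alpha_2))$ follows, and any worry about the orientation convention for $\v_2=\widehat\n\times\v_1$ is harmless since only $\cos(\theta)$ is at stake. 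What your approach buys is considerable: it proves not just the two boundary values of Lemma \ref{L:four} but the full identity of Theorem \ref{T:chebyshev} on all of $(0,\pi)$ in one conceptual stroke, rendering the degree and leading-coefficient bookkeeping of Lemma \ref{L:two} and the root-counting interpolation argument in the proof of Theorem \ref{T:chebyshev} unnecessary (Lemma \ref{L:one}, or simple continuity of both sides in $\cos(\phi)$, still closes the passage to the degenerate endpoints, as you note). The paper's computational route has the minor virtue of producing the explicit polynomials $A_i$, $B_i$ that are reused elsewhere, but as a proof of this lemma yours is cleaner and more illuminating.
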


\begin{proof}
We will use formula \eqref{eq:parametrize-Gamma-1}, which defines $P(\cos(\phi))$ for $\phi \in (0,\pi)$, to show that $P(\cos(\phi))$ limits to, respectively, $\cos(2\ell(\alpha_1+\alpha_2))$ and $\cos(2\ell(\alpha_1-\alpha_2))$ as $\phi \to 0$ and $\phi \to \pi$. We first calculate the limit as $\phi \to 0$. Once we eliminate the factors $\sin^2(\phi)$ as in Lemma \ref{L:one}, the calculation reduces to evaluating the expression 
\[
\frac{A_1 \cdot B_1 + A_2 \cdot B_2 + A_3 \cdot B_3}{\sin^2(\alpha_2)}
\]
at $\phi = 0$. Here, we use the notation from the proof of Lemma \ref{L:two}. It is easy to see that $A_1$, $A_2$, $A_3$ evaluate to 
\[
\begin{aligned}
A_1 & = -  \sin(\alpha_1)\sin(\alpha_2)\cos(\alpha_1+\alpha_2), \\
A_2 & =  \quad \cos(\alpha_1)\sin(\alpha_2)\sin(\alpha_1+\alpha_2), \\
A_3 & =  \quad \sin(\alpha_1)\sin(\alpha_2)\sin(\alpha_1+\alpha_2).	
\end{aligned}
\]
To evaluate $B_1, B_2$ and $B_3$, we will keep track of $\sin(\phi)$ in the formula $Q_1 = (X_1X_2)^{\ell}\cdot P_1 \cdot (X_1X_2)^{-\ell}$ while setting $\cos(\phi)$ equal to one. An induction on $\ell$ can be used to show that
\begin{equation}\label{eq:X1X2_power}
(X_1X_2)^{\ell}\, =\, q^{\ell} \,+\, \sin(\phi)\sin(\alpha_1)\,e^{\i \alpha_1}\left(q^{-\ell}\, \frac{1 - q^{2\ell}}{1 - q^2}\right)\j\, +\, \dots
\end{equation}
where $q = e^{\i(\alpha_1+\alpha_2)}$ and the dots stand for higher degree polynomials in $\sin(\phi)$. Using
the identity
\[
\frac{1 - q^{2\ell}}{1 - q^2}\, =\, e^{\i(\ell-1)(\alpha_1+\alpha_2)}\cdot\frac{\sin\left(\ell\left(\alpha_1+\alpha_2\right)\right)}{\sin(\alpha_1+\alpha_2)},
\]

\smallskip\noindent
the above formulas can be written in trigonometric form. It now follows from the formula  
\[
Q_1\, =\, (X_1X_2)^{\ell}\cdot P_1 \cdot (X_1X_2)^{-\ell}\, =\, \begin{pmatrix}
\;\qquad\quad\; B_1 \\
\;\sin(\phi) \cdot B_2 \\
\;\sin(\phi) \cdot B_3
\end{pmatrix}
\]
that, when $\phi = 0$, we have
\begin{align*}
B_1 &= 1 \\
B_2 &= \cos(2\ell(\alpha_1+\alpha_2)) + 2 \sin(\alpha_1)\big(-\sin(\alpha_2)\cos(\ell(\alpha_1+\alpha_2))\hspace{1in} \\ & \tag*{$ + \cos(\alpha_2)\sin(\ell(\alpha_1+\alpha_2))\big)\,\dfrac{\sin\left(\ell\left(\alpha_1+\alpha_2\right)\right)}{\sin(\alpha_1+\alpha_2)}$\qquad} \\
B_3 &= \sin(2\ell(\alpha_1+\alpha_2))-2\sin(\alpha_1)\big(\cos(\alpha_2)\cos(\ell(\alpha_1+\alpha_2)) \\ &
\tag*{$ + \sin(\alpha_2)\sin(\ell(\alpha_1+\alpha_2))\big)\,\dfrac{\sin\left(\ell\left(\alpha_1+\alpha_2\right)\right)}{\sin(\alpha_1+\alpha_2)}$\qquad}
\end{align*}
Finally, a tedious but straightforward trigonometric calculation using the above formulas shows that
\[
A_1 B_1 + A_2 B_2 + A_3 B_3 \, = \, \sin^2(\alpha_2)\cdot \cos(2\ell(\alpha_1+\alpha_2)),
\]
which immediately implies that $P(\cos(\phi))$ limits to $\cos(2\ell(\alpha_1+\alpha_2))$ as $\phi \to 0$. The calculation of the limit as $\phi \to \pi$ is similar.
\end{proof}

Recall that the Chebyshev polynomial of the first kind $T_m (x)$ is the unique polynomial of degree $m$ satisfying the formula $T_m (\cos (\psi)) = \cos (m\psi)$ for $m = 0, 1, 2 \ldots$ The following theorem is the main result of this section. 

\begin{theorem}\label{T:chebyshev}
Let $T_{2\ell} (x)$ be the Chebyshev polynomial of the first kind of degree $2\ell$. Then the formula \eqref{eq:parametrize-Gamma-1} is equivalent to
\begin{equation}\label{E:chebyshev}
\cos (\theta)\; = \; T_{2 \ell} \big( \cos(\alpha_1) \cos(\alpha_2) - \cos(\phi) \sin(\alpha_1) \sin(\alpha_2) \big).
\end{equation}
\end{theorem}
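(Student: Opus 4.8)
The plan is to bypass a head-on comparison of the two degree-$2\ell$ polynomials in $\cos(\phi)$ — which the three preceding lemmas pin down only partially (leading coefficient and the two boundary values give just three conditions, not enough for $\ell \geq 2$) — and instead give a direct geometric identification of $\theta$. Throughout, I would write $X_1 X_2 = \cos(\psi) + \sin(\psi)\,R$, where $R$ is a purely imaginary unit quaternion and, by the expansion of $X_1 X_2$ in the proof of Lemma \ref{L:one},
\[
\cos(\psi)\; =\; \Re(X_1 X_2)\; =\; \cos(\alpha_1)\cos(\alpha_2) - \cos(\phi)\sin(\alpha_1)\sin(\alpha_2),
\]
which is exactly the argument of $T_{2\ell}$ in \eqref{E:chebyshev}. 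Since the Chebyshev polynomials satisfy $T_{2\ell}(\cos(\psi)) = \cos(2\ell\psi)$, proving \eqref{E:chebyshev} reduces to showing that $\theta = \pm 2\ell\psi$.

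The first step is to recognize the circle $S^1_\phi$ in quaternionic terms. I would rewrite the trace condition $\Re(Y_1^{-1} X_1 X_2) = \cos(\alpha_2)$ that defines the plane \eqref{eq:plane-Q1}, using $Y_1 = \cos(\alpha_1) + \sin(\alpha_1) Q_1$ together with the identities $\Re(Q_1) = 0$ and $\Re(Q_1 R) = - Q_1 \cdot R$ valid for purely imaginary quaternions. A short computation turns this condition into
\[
Q_1 \cdot R\; =\; \frac{\cos(\alpha_2) - \cos(\alpha_1)\cos(\psi)}{\sin(\alpha_1)\sin(\psi)},
\]
which exhibits the defining equation of $S^1_\phi$ as a level set of the linear functional $Q_1 \mapsto Q_1 \cdot R$. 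In other words, the normal vector $\n$ of \eqref{eq:plane-Q1} is parallel to the rotation axis $R$ of $X_1 X_2$, so the circle $S^1_\phi$ lies in a plane orthogonal to $R$.

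The second step uses that $Q_1 = (X_1 X_2)^{\ell}\, P_1\, (X_1 X_2)^{-\ell}$, as recorded in the proof of Lemma \ref{L:one}. Since $(X_1 X_2)^{\ell} = \cos(\ell\psi) + \sin(\ell\psi)\,R$, conjugation by it acts on purely imaginary quaternions as the rotation by angle $2\ell\psi$ about the axis $R$. Both $P_1$ and $Q_1$ lie on $S^1_\phi$, and this circle is orthogonal to $R$ by the first step, so the rotation carries $P_1$ to $Q_1$ along $S^1_\phi$ through the angle $2\ell\psi$. Because $\theta$ is by construction the angular coordinate on $S^1_\phi$ measured from $P_1$ about the axis $\n$ — recall from \eqref{E:param} that $\theta = 0$ corresponds to $Q_1 = P_1$ and that increasing $\theta$ rotates $\v_1$ toward $\v_2 = (\n/|\n|)\times\v_1$ — and since $\n$ is parallel to $R$, I conclude that $\theta = \pm 2\ell\psi$ and therefore
\[
\cos(\theta)\; =\; \cos(2\ell\psi)\; =\; T_{2\ell}(\cos(\psi))\; =\; T_{2\ell}\big(\cos(\alpha_1)\cos(\alpha_2) - \cos(\phi)\sin(\alpha_1)\sin(\alpha_2)\big).
\]

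I expect the main obstacle to be the clean verification that $\n$ is parallel to $R$ in the first step (valid for $\phi\in(0,\pi)$, where $\sin(\psi)\neq 0$ by irreducibility), and the careful bookkeeping of orientations in the second step; the latter only affects the sign of $2\ell\psi$, which is immaterial since \eqref{E:chebyshev} involves $\cos(\theta)$. As an independent consistency check I would match this answer against the three lemmas: the affine function $\cos(\psi)$ has leading coefficient $-\sin(\alpha_1)\sin(\alpha_2)$ in $\cos(\phi)$, so $T_{2\ell}(\cos(\psi))$ has degree $2\ell$ and leading coefficient $2^{2\ell-1}\sin^{2\ell}(\alpha_1)\sin^{2\ell}(\alpha_2)$, in agreement with Lemma \ref{L:two}; and the boundary values $\cos(\psi)|_{\phi=0} = \cos(\alpha_1+\alpha_2)$ and $\cos(\psi)|_{\phi=\pi} = \cos(\alpha_1-\alpha_2)$ yield $\cos(2\ell(\alpha_1+\alpha_2))$ and $\cos(2\ell(\alpha_1-\alpha_2))$, in agreement with Lemma \ref{L:four}.
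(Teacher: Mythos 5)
Your proof is correct, and it takes a genuinely different route from the paper's. The paper argues polynomially: it forms the difference $R(x) = P(x) - T_{2\ell}\big(\cos(\alpha_1)\cos(\alpha_2) - x\sin(\alpha_1)\sin(\alpha_2)\big)$ and kills it by root-counting --- not with only the three conditions you mention (leading coefficient plus two boundary values), but crucially also using that each of the $\ell-1$ intersection points is a local maximum of $P$, hence a root of $R$ of multiplicity at least two; this gives $2\ell$ roots against degree at most $2\ell-1$ for $(\alpha_1,\alpha_2)$ near $(\pi/2,\pi/2)$, and the general case follows by analyticity in $(\alpha_1,\alpha_2)$. So your opening criticism slightly undersells the paper's argument, which is complete. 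Your geometric argument is sound: writing $X_1X_2 = e^{\psi R}$ with $\sin\psi \neq 0$ for $\phi\in(0,\pi)$ (since $\cos\psi$ lies strictly between $\cos(\alpha_1+\alpha_2)$ and $\cos(\alpha_1-\alpha_2)$), your computation $\Re(Y_1^{-1}X_1X_2) = \cos(\alpha_1)\cos(\psi) + \sin(\alpha_1)\sin(\psi)\,(Q_1\cdot R)$ does show that $\n$ is parallel to $R$, and conjugation by $(X_1X_2)^{\ell} = e^{\ell\psi R}$ is rotation by $2\ell\psi$ about $R$, which preserves $S^1_\phi$ and carries $P_1$ to $Q_1$; hence $\theta = \pm 2\ell\psi$ and $\cos\theta = T_{2\ell}(\cos\psi)$, with the sign ambiguity harmless. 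What your approach buys: it bypasses the heavy computations of Lemmas \ref{L:two} and \ref{L:four} (which become consistency checks or corollaries), it needs no genericity assumption or analytic continuation in $(\alpha_1,\alpha_2)$, and it identifies the angle $\theta$ itself rather than only its cosine. What the paper's approach buys: the explicit polynomial data in Lemmas \ref{L:one}--\ref{L:four} is developed anyway and reused in the transversality and boundary-behavior analysis, so the root-counting proof comes almost for free once that machinery is in place.
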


\begin{proof}
The solutions of the equation $P(\cos(\phi)) = 1$ with $0 < \phi < \pi$ are precisely the intersection points of $\widehat{\Lambda}_2^{\alpha, c}$ and $\widehat{\Gamma}_{\beta}^{\alpha,c}$ in the pillowcase $\widehat{H}_2^{\alpha, c}$. According to \eqref{E:presentation}, these correspond to the conjugacy classes of irreducible representations $\rho: \pi_1 (S^3 \setminus L_{\ell}) \to \SU(2)$, which we described as the values of $\phi$ solving equation \eqref{eq:solution-phi}. Note that the function $P(\cos(\phi))$ achieves its absolute maximum at all such $\phi$ hence we also know that $P'(\cos(\phi)) = 0$.

Let us first assume that the values of $(\alpha_1,\alpha_2)$ are in a sufficiently small neighborhood of $(\pi/2,\pi/2)$ chosen so that equation \eqref{eq:solution-phi} has the maximal possible number of solutions, which is $\ell - 1$. Consider the polynomial 
\[
R(x)\, = \, P(x) - T_{2\ell} \big( \cos(\alpha_1) \cos(\alpha_2) - x\sin(\alpha_1) \sin(\alpha_2) \big).
\] 
This polynomial has $\ell - 1$ roots $x = \cos(\pi m/\ell)$, $m = 1,\ldots, \ell -1$, each of multiplicity at least two. In addition, it has roots $x = \pm 1$ by Lemma \ref{L:four}. On the other hand, the degree of $R(x)$ is at most $2\ell$ by Lemma \ref{L:two}, while the leading coefficients of $P(x)$ and $T_{2\ell} \big( \cos(\alpha_1) \cos(\alpha_2) - x\sin(\alpha_1) \sin(\alpha_2) \big)$ match by Lemma \ref{L:two}. Therefore, the degree of $R(x)$ is at most $2\ell - 1$ so $R(x)$ must vanish.

To finish the proof, we observe that both polynomials $P(x)$ and $T_{2\ell} \big( \cos(\alpha_1) \cos(\alpha_2) - x\sin(\alpha_1) \sin(\alpha_2) \big)$ are analytic functions in $(\alpha_1, \alpha_2)$. Since we proved that they equal each other in an open neighborhood of $(\pi/2,\pi/2)$, they must equal each other for all $(\alpha_1, \alpha_2)$.
\end{proof}

\subsection{Transversality}
In general, $\widehat{\Gamma}_{\beta}^{\alpha,c}$ need to be perturbed for the intersection number \eqref{E:def} to make sense. However, in the case of $(2,2\ell)$--torus links, no perturbation is necessary as the intersection is automatically transversal.
	
\begin{proposition}\label{P:transverse}
Let $\beta = (\sigma_1)^{2\ell}$ be the braid whose closure is the $(2,2\ell)$--torus link with $\ell \neq 0$. Then the intersection of $\widehat{\Gamma}_{\beta}^{\alpha,c}$ and $\widehat{\Lambda}_2^{\alpha, c}$ in $\widehat{H}_2^{\alpha, c}$ is transversal, and all the intersections points contribute with the same sign into the algebraic count.
\end{proposition}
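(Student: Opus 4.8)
The plan is to read off both claims directly from the explicit parameterization of the pillowcase combined with Theorem \ref{T:chebyshev}. In the $(\phi,\theta)$ coordinates of Section \ref{sec:five}, the diagonal $\widehat{\Lambda}_2^{\alpha,c}$ is the curve $\theta = 0$, parameterized by $\phi \in (0,\pi)$ with tangent $\partial_\phi$, while the graph $\widehat{\Gamma}_{\beta}^{\alpha,c}$ is the curve $\phi \mapsto (\phi,\theta(\phi))$, where $\theta(\phi)$ is the angular coordinate of $Q_1 = (X_1 X_2)^{\ell} P_1 (X_1 X_2)^{-\ell}$ from \eqref{E:param} and, by Theorem \ref{T:chebyshev}, satisfies $\cos(\theta(\phi)) = T_{2\ell}(y(\phi))$ with $y(\phi) = \cos(\alpha_1)\cos(\alpha_2) - \cos(\phi)\sin(\alpha_1)\sin(\alpha_2)$. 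First I would record that $y$ is strictly monotone on $(0,\pi)$, since $y'(\phi) = \sin(\phi)\sin(\alpha_1)\sin(\alpha_2) \ne 0$, that $Q_1$ depends smoothly on $\phi$ so that $\theta(\phi)$ is smooth, and that the intersection points are exactly the $\phi_0 \in (0,\pi)$ with $T_{2\ell}(y(\phi_0)) = 1$, matching the count of Proposition \ref{P:solution-phi}. Since both curves are parameterized by $\phi$, transversality at such a point reduces to the single condition $\theta'(\phi_0) \ne 0$.

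To verify this I would use that the values $y_0 = y(\phi_0)$ with $T_{2\ell}(y_0)=1$ are interior local maxima of $T_{2\ell}$, hence simple roots of $T_{2\ell}'$ with $T_{2\ell}''(y_0) \ne 0$; this is the standard fact that the $2\ell-1$ interior critical points of $T_{2\ell}$, located at $y = \cos(\pi k/(2\ell))$, are distinct and non-degenerate. Differentiating $\cos(\theta(\phi)) = T_{2\ell}(y(\phi))$ twice and evaluating at $\phi_0$, where $\theta = 0$ and $T_{2\ell}'(y_0) = 0$, gives
\[
-\,\big(\theta'(\phi_0)\big)^2 \; = \; T_{2\ell}''(y_0)\,\big(y'(\phi_0)\big)^2,
\]
so $(\theta'(\phi_0))^2 = |T_{2\ell}''(y_0)|\,\sin^2(\alpha_1)\sin^2(\alpha_2)\sin^2(\phi_0) > 0$. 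This establishes transversality and, combined with Proposition \ref{P:solution-phi}, pins down the number of intersection points.

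Next I would upgrade this to strict monotonicity of $\theta(\phi)$ on all of $(0,\pi)$. Away from the critical points of $T_{2\ell}$ one has $\theta'(\phi) = -T_{2\ell}'(y)\,y'/\sin(\theta)$, which is nonzero because both $T_{2\ell}'(y) \ne 0$ and $\sin(\theta) \ne 0$ there. At each critical point of $T_{2\ell}$ — precisely the points where $\cos(\theta) = \pm 1$ and $\sin(\theta) = 0$, so that the formula is an indeterminate $0/0$ — the same quadratic expansion as above (now using $T_{2\ell}'' \ne 0$ at minima as well) shows $\theta' \ne 0$. Hence $\theta'$ never vanishes, $\theta$ is strictly monotone, and the graph winds monotonically around the cylinder, crossing the diagonal $\theta = 0$ always in the same direction.

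Finally, for the signs, I would note that the orientations of $\widehat{\Lambda}_2^{\alpha,c}$, $\widehat{\Gamma}_{\beta}^{\alpha,c}$ and $\widehat{H}_2^{\alpha,c}$ fixed in Section \ref{sec:invariant-def} are defined globally and that the $(\phi,\theta)$ chart is a single chart covering the cylindrical part of the pillowcase, where all intersection points live. Writing the oriented tangents as $\varepsilon_\Lambda\,\partial_\phi$ and $\varepsilon_\Gamma\big(\partial_\phi + \theta'\,\partial_\theta\big)$ for global signs $\varepsilon_\Lambda,\varepsilon_\Gamma$, the wedge $\varepsilon_\Lambda\varepsilon_\Gamma\,\theta'\,\partial_\phi\wedge\partial_\theta$ compared against the ambient orientation shows that the local intersection number at $\phi_0$ equals $C\cdot\sign(\theta'(\phi_0))$, where $C$ collects the three manifold orientations together with the chart orientation and is the same at every intersection point. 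Strict monotonicity makes $\sign(\theta'(\phi_0))$ constant, so all points contribute with the same sign. I expect the main obstacle to be the bookkeeping in this last step — keeping the lift of $\theta(\phi)$, the induced orientations, and the chart orientation mutually consistent so that $C$ is a genuine global constant — rather than the transversality, which is forced by the non-degeneracy of the Chebyshev extrema.
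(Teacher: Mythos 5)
Your proof is correct and follows essentially the same route as the paper: both parameterize the two curves by $\phi$ in the pillowcase chart, reduce transversality to the nonvanishing of $d\theta/d\phi$ at the intersection points via the Chebyshev identity of Theorem \ref{T:chebyshev}, and invoke the nondegeneracy of the interior extrema of $T_{2\ell}$ (equivalently, that each $\cos(\pi m/\ell)$ is a simple root of $U_{2\ell-1}$ and a double root of $1-T_{2\ell}^2$). Your second-derivative identity $(\theta'(\phi_0))^2=-T_{2\ell}''(y_0)\,(y'(\phi_0))^2$ and your explicit global monotonicity of $\theta(\phi)$ for the equal-signs claim are minor (and slightly more careful) variants of the paper's limit computation of $U_{2\ell-1}(x)/\sqrt{1-T_{2\ell}^2(x)}$ and its appeal to the curve being decreasing at each crossing.
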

	
\begin{proof}
We wil assume that $\ell > 0$ since the case of $\ell < 0$ can be handled similarly. The curves $\widehat{\Gamma}_{\beta}^{\alpha,c}$ and $\widehat{\Lambda}_2^{\alpha, c}$ intersect exactly at the points $(\phi,0)$, where $\phi$ solves the equation
\[
\cos(0) = 1 = T_{2 \ell} \big( \cos(\alpha_1) \cos(\alpha_2) - \cos(\phi) \sin(\alpha_1) \sin(\alpha_2) \big),
\]
which we have shown happens exactly when
\[
\cos(\alpha_1) \cos(\alpha_2) - \cos(\phi) \sin(\alpha_1) \sin(\alpha_2) = \cos \left( \frac{m\pi}{\ell} \right)\;\; \text{for} \;\;  m = 1, 2, \dots, \ell - 1.
\]
The curve $\widehat{\Gamma}_{\beta}^{\alpha,c}$ is smooth near each of the intersection points. It is parameterized by $(\phi,\theta)$, where $\theta$ is a smooth function of $\phi$ found by solving the equation \eqref{E:param}. The intersection points give local maxima of the Chebyshev polynomial $T_{2\ell}$ hence the curve $\widehat{\Gamma}_{\beta}^{\alpha,c}$ must be decreasing near these points as shown in \autoref{fig:graph-example}. Therefore, all the intersection numbers will be of the same sign once we prove that the intersections are transversal.
		
\begin{figure}[ht!]
\centering
\includegraphics[width=0.3\textwidth]{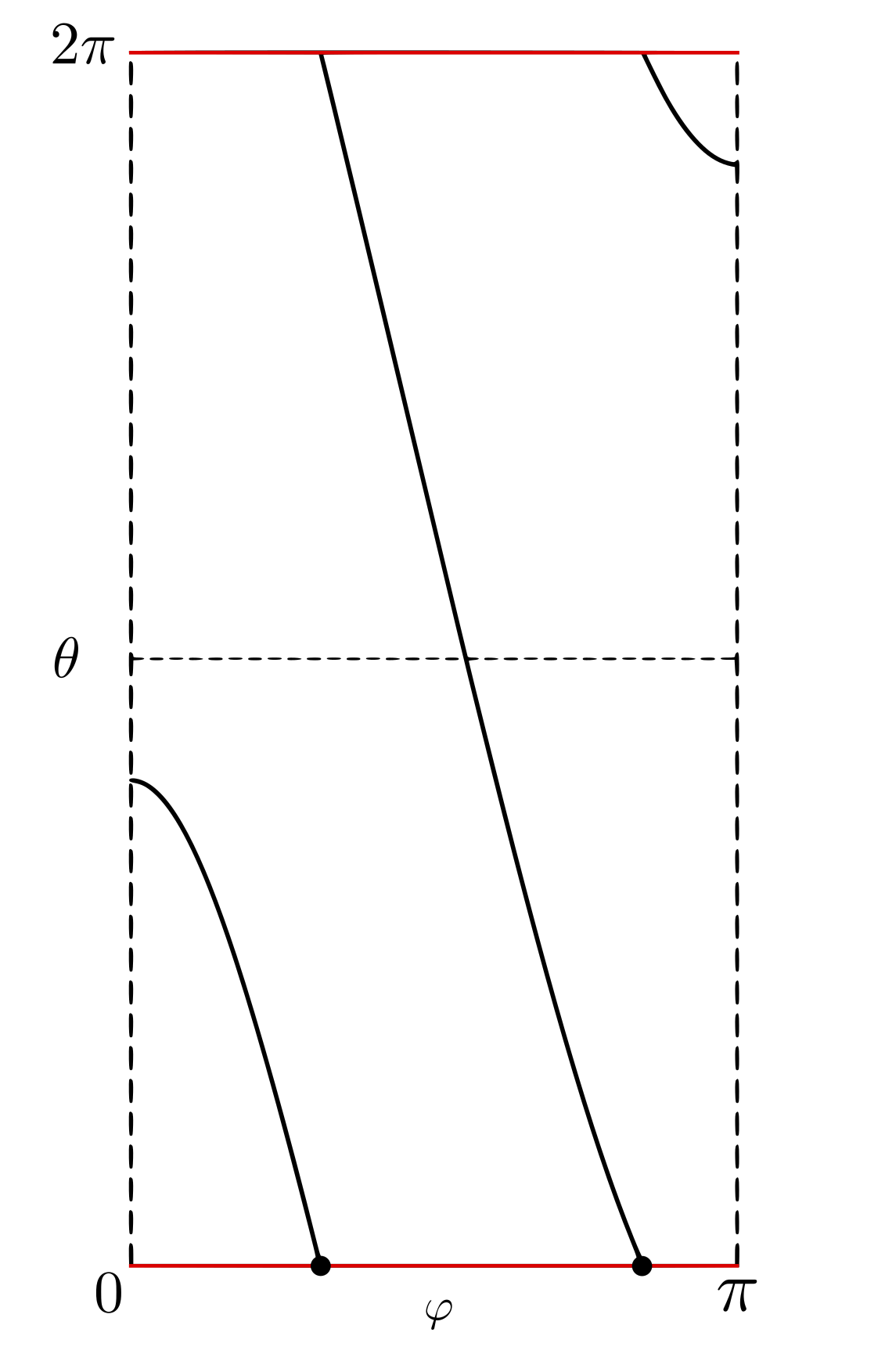}
\caption{An example of $\widehat{\Gamma}_{\beta}^{\alpha,c}$}
\label{fig:graph-example}
\end{figure}
		
To prove transversality, it suffices to show that the derivative of $\theta$ with respect to $\phi$ is not zero near the intersection points. Differentiating the formula \eqref{E:chebyshev} and keeping in mind that $T_n' (x) = n U_{n-1} (x)$, where $U_{n-1}(x)$ is the Chebyshev polynomial of the second kind of degree $n - 1$, we obtain
\[
\frac{d \theta}{d \phi}  = - 2 \ell\cdot \sin(\alpha_1) \sin(\alpha_2) \sin(\phi)\, \frac{U_{2 \ell - 1} (x)}{\sqrt{1 - T_{2 \ell}^2 (x)}}.
\]
We wish to show that, as $x\, \to \, \cos(\pi m/{\ell})$ from the left, the limit of the right hand side of this equation is non-zero. This is true because of the well known fact that each $\cos(\pi m/{\ell})$ is a simple root of $U_{2 \ell - 1}(x)$ and a double root of $1 - T_{2 \ell}^2 (x)$.
\end{proof}

\subsection{Fixing the overall sign}\label{sec:sign}
An immediate corollary of Proposition \ref{P:solution-phi} and Proposition \ref{P:transverse} is that, up to an overall sign, the B{\'e}nard--Conway invariant $h_{L_{\ell}}(\alpha)$ of the $(2,2\ell)$--torus link equals the number of integers $m \in \{1,\ldots, |\ell| -1\}$ such that 
\[
\cos(\pi m/\ell) \in [\,\cos(\alpha_1 + \alpha_2), \cos(\alpha_1 - \alpha_2)\,].
\]

To determine the sign, we need to get specific about the orientations of $\widehat{\Gamma}_{\beta}^{\alpha,c}$, $\widehat{\Lambda}_2^{\alpha, c}$ and $\widehat{H}_2^{\alpha, c}$ and to compute at least one intersection number explicitly. We will only compute the intersection number in the special case of $\alpha = (\alpha_1, \alpha_2) = (\pi/2, \pi/2)$. The calculation for general $\ell > 0$ and $\alpha$ will be similar, and the case of negative $\ell$ will be addressed later in this section. The following argument is a modification of the argument of Boden and Herald \cite {Boden-Herald}.

In the case at hand, $\widehat{\Gamma}_{\beta}^{\alpha,c}$ intersects $\widehat{\Lambda}_2^{\alpha, c}$ at one point $(\phi, \theta) = (\pi / 2, 0)$, which corresponds to the conjugacy class of $(\j, \i, \j, \i)$ in $\widehat{H}_2^{\alpha, c}$. Consider the function $f: R_2^{\alpha, c} \times R_2^{\alpha, c} \to \SU(2)$ given by $f(X_1, X_2, Y_1, Y_2) = X_1 X_2 Y_2^{- 1} Y_1^{- 1}$ and the function $g: (0, \pi)\, \times\, [0, 2 \pi) \to R_2^{\alpha, c}\times R_2^{\alpha, c}$ given by
\[
g(\phi, \theta) = (\i e^{- \k \phi}, \i, i e^{- \k (\phi - \theta)}, \i e^{\k \theta}).
\]
The latter is exactly the quadruple $(e^{\alpha_1 P_1}, e^{\alpha_2 \i}, e^{\alpha_1 Q_1}, e^{\alpha_2 Q_2})$ used to parameterize the pillowcase $\widehat{H}_2^{\alpha, c}$, with $\alpha_1 = \alpha_2 = \pi / 2$ substituted in the equation. Notice that $\widehat{H}_2^{\alpha, c} $ is the quotient of $f^{-1}(1)$ by conjugation. We will orient $f^{-1}(1)$ and $\widehat{H}_2^{\alpha, c}$ by applying the base--fiber rule. 

First, we will consider the map $g$. Two tangent vectors that span the tangent space to $\widehat{H}_2^{\alpha, c}$ at $(\j, \i, \j, \i)$ are given by
\[
\begin{aligned}
u_1 & = \left. \frac{\partial g}{\partial \phi}\, \right\vert_{\,(\phi,\theta)\, =\, ({\pi}/{2}, 0)} =\, (- \i, 0, - \i, 0), \\
u_2 & = \left. \,\frac{\partial g}{\partial \theta}\, \right\vert_{\,(\phi,\theta)\, =\, ({\pi}/{2}, 0)} =\, (0, 0, \i, - \j).
\end{aligned}
\]

\smallskip\noindent
The tangent space to the orbit through $(\j, \i, \j, \i)$ is spanned by the vectors 
\[
\begin{aligned}
v_1 & = \left. \frac{\partial}{\partial t} e^{\i t}\, (\j, \i, \j, \i)\, e^{- \i t}\;  \right\vert_{\,t = 0}  = (2 \k, 0, 2 \k, 0), \\
v_2 & = \left. \frac{\partial}{\partial t} e^{\j t}\, (\j, \i, \j, \i)\, e^{- \j t}\,  \right\vert_{\,t = 0}  = (0, - 2 \k, 0, - 2 \k), \\
v_3 & = \left. \frac{\partial}{\partial t} e^{\k t} (\j, \i, \j, \i) e^{- \k t} \,  \right\vert_{\,t = 0}   = (- 2 \i, 2 \j, - 2 \i, 2 \j).
\end{aligned}
\]
The vectors $\lbrace u_1, u_2, v_1, v_2, v_3 \rbrace$ form a basis of $T_{(\j, \i, \j, \i)}f^{- 1}(1)$. Complete it to a basis for $T_{(\j, \i, \j, \i)} \left(R_2^{\alpha, c} \times R_2^{\alpha, c}\right)$ using the vectors $\lbrace w_1, w_2, w_3 \rbrace$, where we can choose $w_1 = (\k, 0, 0, 0)$, $w_2 = (0, \k, 0, 0)$, and $w_3 = (0, \j, 0, 0)$. Notice that the orientation of the ordered triple $\lbrace df(w_1), df(w_2), df(w_3) \rbrace = \lbrace \i, - \j, - \k \rbrace$ is consistent with that of the standard basis of Lie algebra $\mathfrak{su}(2)$. 

The oriented bases $\lbrace w_1, w_2, w_3, u_1, u_2, v_1, v_2, v_3 \rbrace$ and $\{ (\i, 0, 0, 0), (\k, 0, 0, 0), (0, \j, 0, 0), \allowbreak (0, - \k, 0, 0), (0, 0, \i, 0), (0, 0, \k, 0), (0, 0, 0, \j), (0, 0, 0, - \k)\}$ of $T_{(\j, \i, \j, \i)} \left(R_2^{\alpha, c} \times R_2^{\alpha, c}\right)$ are related by the matrix
\[
\left(
\begin{array}{rrrrrrrr}
0 & 0 & \;\; 0 & - 1 & \;\; 0 & \;\; 0  & \;\; 0 & - 2 \\
1 & 0 & 0 & 0 & 0 & 2 & 0 & 0 \\
0 & 0 & 1 & 0 & 0 & 0 & 0 & 2 \\
0 & - 1 & 0 & 0 & 0 & 0 & 2 & 0 \\
0 & 0 & 0 & - 1 & 1 & 0 & 0 & - 2 \\
0 & 0 & 0 & 0 & 0 & 2 & 0 & 0 \\
0 & 0 & 0 & 0 & - 1 & 0 & 0 & 2 \\
0 & 0 & 0 & 0 & 0 & 0 & 2 & 0
\end{array}
\right)
\]

\medskip\noindent
of determinant is $- 8$. This implies that the ordered pair $\lbrace u_2, u_1 \rbrace$ gives a positively oriented basis in $T_{(\j, \i, \j, \i)}\, \widehat{H}_2^{\alpha, c}$. 

Next, consider the one--variable parameterizations $\widehat{\Lambda}_2^{\alpha, c} = \{(\i e^{- \k \phi}, \i, i e^{- \k \phi}, \i)\}$ and $\widehat{\Gamma}_{\beta}^{\alpha,c} = \{(\i e^{- \k \phi}, \i, (\i e^{- \k \phi}, \i))\,\sigma_1^4\}$, where $(X_1, X_2)\,\sigma_1 = (X_1 X_2 X_1^{- 1}, X_1)$ is the homeomorphism induced by the braid. Compute the tangent vector 
\[
\psi_1 = \left. \frac{\partial }{\partial \phi} (\i e^{- \k \phi}, \i, i e^{- \k \phi}, \i) \right\vert_{\phi = \frac{\pi}{2}}  = (- \i, 0, - \i, 0)
\]
to $\widehat{\Lambda}_2^{\alpha, c}$ and the tangent vector 
\[
\psi_2 = \left. \frac{\partial }{\partial \phi} (\i e^{- \k \phi}, \i, \sigma_1^4 (\i e^{- \k \phi}, \i)) \right\vert_{\phi = \frac{\pi}{2}}  = (- \i, 0, - 5 \i, 4 \j)
\]
to $\widehat{\Gamma}_{\beta}^{\alpha,c}$. The bases $\lbrace \psi_1, \psi_2 \rbrace$ and $\lbrace u_2, u_1 \rbrace$ in the tangent space $T_{(\j, \i, \j, \i)}\, \widehat{H}_2^{\alpha, c}$ are related by the matrix
\[
\left(\begin{array}{rr}
0 & - 4 \\
1 & \quad 1
\end{array}
\right).
\]
Since the determinant of this matrix is positive, we conclude that the intersection number $h_L(\alpha)=\langle \widehat{\Lambda}_2^{\alpha, c}, \widehat{\Gamma}_{\beta}^{\alpha,c} \rangle_{\widehat{H}_2^{\alpha, c}}$  equals $+1$.
		

\begin{corollary}\label{cor:representation-invariant}
For any integer $\ell > 0$ and for any choice of $(\alpha_1,\alpha_2)$ away from the set \eqref{E:roots}, the B{\'e}nard--Conway invariant $h_{L_{\ell}}(\alpha)$ of the $(2,2\ell)$--torus link is well defined and is equal to the number of integers $m \in \{1,\ldots, |\ell| -1\}$ such that 
\[
\cos(\pi m/\ell) \in [\,\cos(\alpha_1 + \alpha_2), \cos(\alpha_1 - \alpha_2)\,].
\]
\end{corollary}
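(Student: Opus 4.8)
The plan is to read the corollary off from the three results already in place: the enumeration of irreducible representations in Proposition~\ref{P:solution-phi}, the transversality together with sign coherence in Proposition~\ref{P:transverse}, and the description of the root locus of the Alexander polynomial in Proposition~\ref{prop:non-vanishing-condition}. First I would dispatch well-definedness. By Proposition~\ref{prop:non-vanishing-condition}, the hypothesis that $(\alpha_1,\alpha_2)$ lies away from the set \eqref{E:roots} is precisely the condition that $\Delta_{L_{\ell}}$ is nonzero on $S(\alpha)$; by \thref{prop:compactness} the intersection $\widehat{\Lambda}_2^{\alpha,c}\cap\widehat{\Gamma}_{\beta}^{\alpha,c}$ is then compact, and by \thref{def:invariant-def} the invariant $h_{L_{\ell}}(\alpha)$ is defined.

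Next I would identify $h_{L_{\ell}}(\alpha)$ with an honest signed count of points. Proposition~\ref{P:transverse} guarantees that for the braid $\beta=\sigma_1^{2\ell}$ the intersection of $\widehat{\Gamma}_{\beta}^{\alpha,c}$ with $\widehat{\Lambda}_2^{\alpha,c}$ is already transversal, so that no compactly supported perturbation is needed and $h_{L_{\ell}}(\alpha)$ equals the algebraic number of intersection points. The same proposition asserts that every such point contributes with one common sign $s\in\{\pm1\}$, whence $h_{L_{\ell}}(\alpha)=s\cdot N(\alpha)$ with $N(\alpha)$ the geometric count. Via the correspondence \eqref{E:presentation} between intersection points and conjugacy classes of irreducible representations, Proposition~\ref{P:solution-phi} evaluates $N(\alpha)$ as the number of integers $m\in\{1,\dots,|\ell|-1\}$ with $\cos(\pi m/\ell)\in[\cos(\alpha_1+\alpha_2),\cos(\alpha_1-\alpha_2)]$. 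This proves the corollary up to the determination of $s$.

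The substantive step, which I expect to be the main obstacle, is to show $s=+1$ uniformly in $\ell$ and $\alpha$. The orientation bookkeeping of Section~\ref{sec:sign} carries this out for $\ell=2$ and $\alpha=(\pi/2,\pi/2)$, where $N=1$: once the product, base--fiber, and quotient orientations are fixed, the pair of tangent vectors to $\widehat{\Lambda}_2^{\alpha,c}$ and to $\widehat{\Gamma}_{\beta}^{\alpha,c}$ at the intersection point is related to a positively oriented frame of $\widehat{H}_2^{\alpha,c}$ by a matrix of positive determinant. I would then propagate this sign in two directions. For $\alpha=(\pi/2,\pi/2)$ and arbitrary $\ell>0$ the computation is structurally identical, the only $\ell$-dependence entering through the braid $\sigma_1^{2\ell}$, which scales the relevant determinant by a positive multiple of $\ell$ and hence leaves the sign positive. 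For arbitrary admissible $\alpha$ I would appeal to the explicit derivative from the proof of Proposition~\ref{P:transverse}: its sign is governed by the product $\sin(\alpha_1)\sin(\alpha_2)\sin(\phi)$, strictly positive on $(0,\pi)^2\times(0,\pi)$, times a factor determined by the $\alpha$-independent behavior of $U_{2\ell-1}$ and $1-T_{2\ell}^2$ near $\cos(\pi m/\ell)$. Thus the crossing direction, and therefore $s$, is independent of $\alpha$ and of $m$, so the value $s=+1$ computed at $(\pi/2,\pi/2)$ holds at every point away from the set \eqref{E:roots}. The delicate part throughout is the orientation accounting itself: one must keep the product, base--fiber, and quotient orientations rigidly consistent so that the single explicit determinant is genuinely positive and so that its sign may legitimately be transported to every chamber.
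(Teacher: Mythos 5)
Your proposal follows the paper's argument essentially verbatim: well-definedness from Proposition \ref{prop:non-vanishing-condition}, the unsigned count from Propositions \ref{P:solution-phi} and \ref{P:transverse}, and the overall sign fixed by the explicit orientation computation at $\ell=2$, $\alpha=(\pi/2,\pi/2)$. Your sketch of how to propagate that sign to general $\ell$ and $\alpha$ (the determinant scaling by $2\ell>0$, and the sign of $d\theta/d\phi$ at the crossings) is in fact slightly more explicit than the paper, which simply asserts the general calculation is ``similar.''
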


\begin{figure}[!ht]
\centering
\includegraphics[width=0.9\textwidth]{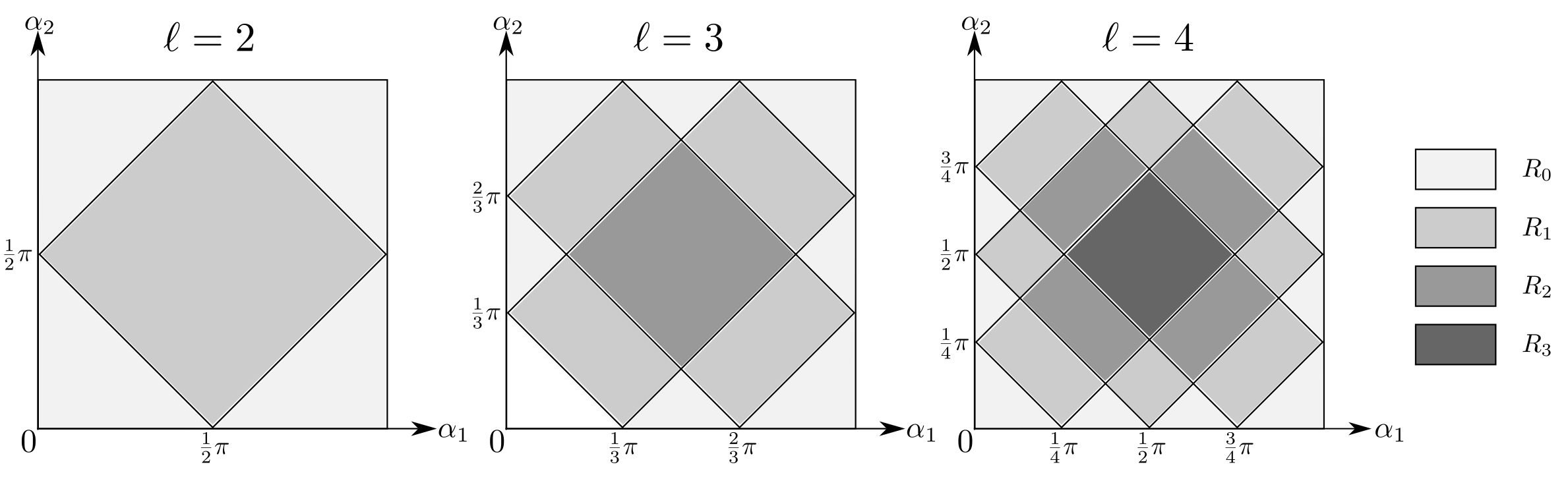}
\caption{Regions $R_m$ for small $\ell$}
\label{fig:Rm}
\end{figure}
		
For $\ell = 1$, we obviously obtain $h_{L_1}(\alpha) = 0$, as established by B{\'e}nard and Conway \cite{Benard-Conway}. For the first few $\ell \ge1$, the invariant $h_{L_{\ell}}(\alpha)$ equals $m$ in each of the respective diamond--shaped regions $R_m$ depicted in \autoref{fig:Rm}. The boundaries of $R_m$ consist of the values of $(\alpha_1,\alpha_2)$ from the set \eqref{E:roots}, where the multivariable Alexander polynomial of $L_{\ell}$ vanishes and hence the B{\'e}nard--Conway invariant is not defined. 
		
We will finish this section by computing the invariant $h_{L_{-\ell}} (\alpha)$ with $\ell > 0$. The link $L_{-\ell}$ is the closure of the braid $\sigma_1^{-2\ell}$ and the above calculation for $\ell = 2$ and $\alpha_1 = \alpha_2 = \pi/2$ can be repeated word for word. The only essential change occurs in parameterizing $\widehat{\Gamma}_{\beta}^{\alpha,c} = \{(\i e^{- \k \phi}, \i, (\i e^{- \k \phi}, \i)\,\sigma_1^{-4})\}$, which produces the tangent vector
\[
\psi_2 = \left. \frac{\partial}{\partial \phi} (\i e^{- \k \phi}, \i, \sigma_1^{- 4}(\i e^{- \k \phi}, \i))\,\right\vert_{\phi = \frac{\pi}{2}} = \, (- \i, 0, 3 \i, - 4 \j).
\]
The basis $\lbrace \psi_1, \psi_2 \rbrace$ is now related to the basis $\lbrace u_2, u_1 \rbrace$ by the matrix 
\[
\begin{pmatrix}
0 & 4 \\
1 & 1
\end{pmatrix}
\]
of negative determinant. This brings us to the following conclusion.

\begin{corollary}\label{C:h-sign}
For any integer $\ell > 0$ and for any choice of $(\alpha_1,\alpha_2)$ away from the set \eqref{E:roots} the invariants $h_{L_{\ell}}(\alpha)$ and $h_{L_{-\ell}}(\alpha)$ are well defined and related by the formula 
\[
h_{L_{-\ell}}(\alpha)\; = \, -h_{L_{\ell}} (\alpha).
\]
\end{corollary}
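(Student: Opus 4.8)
The plan is to reduce the corollary to the single orientation computation already carried out for $L_{-\ell}$ immediately above, by arguing that everything except the overall intersection sign is insensitive to replacing $\ell$ by $-\ell$. First I would observe that the multivariable Alexander polynomial is unchanged: Milnor's formula gives $\Delta_{L_{-\ell}} \doteq ((t_1 t_2)^{|\ell|}-1)/(t_1 t_2 - 1)$, the same as $\Delta_{L_{\ell}}$. Consequently the vanishing locus \eqref{E:roots} is identical for the two links, so $h_{L_{-\ell}}(\alpha)$ is well defined for exactly the same $\alpha$ as $h_{L_{\ell}}(\alpha)$, which establishes the well-definedness claim.

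Next I would show that the two invariants have equal absolute value. The count of conjugacy classes of irreducible representations in Proposition \ref{P:solution-phi} depends only on $|\ell|$, and the parameterization of $\widehat{\Gamma}_{\beta}^{\alpha,c}$ in Theorem \ref{T:chebyshev} is governed by $T_{2\ell}$, which is insensitive to the sign of $\ell$ since $T_{-n}=T_n$ and its argument $\cos(\alpha_1)\cos(\alpha_2)-\cos(\phi)\sin(\alpha_1)\sin(\alpha_2)$ does not involve $\ell$. Hence the transversality argument of Proposition \ref{P:transverse} carries over to $L_{-\ell}=\widehat{\sigma_1^{-2\ell}}$: the curves $\widehat{\Gamma}_{\beta}^{\alpha,c}$ and $\widehat{\Lambda}_2^{\alpha,c}$ meet transversally at the same local maxima of $T_{2\ell}$, and all intersection points contribute with a single common sign. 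Therefore $|h_{L_{-\ell}}(\alpha)| = |h_{L_{\ell}}(\alpha)|$, so the two invariants coincide up to one global sign.

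It then remains only to fix that sign, which I would extract from the explicit computation preceding the statement. At $\alpha_1=\alpha_2=\pi/2$ with $\ell=2$, replacing $\sigma_1^{4}$ by $\sigma_1^{-4}$ alters nothing except the tangent vector to $\widehat{\Gamma}_{\beta}^{\alpha,c}$, which becomes $\psi_2=(-\i,0,3\i,-4\j)$; the change-of-basis matrix from $\{\psi_1,\psi_2\}$ to the positively oriented basis $\{u_2,u_1\}$ of $T_{(\j,\i,\j,\i)}\widehat{H}_2^{\alpha,c}$ acquires negative determinant, so the local intersection sign flips from $+1$ to $-1$. Combined with Corollary \ref{cor:representation-invariant}, which fixes the common sign as $+1$ for $L_{\ell}$, this yields $h_{L_{-\ell}}(\alpha) = -h_{L_{\ell}}(\alpha)$.

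The step I expect to be the main obstacle is justifying that this one sign flip is genuinely uniform, that is, independent of the chosen intersection point, of the chamber of $(\alpha_1,\alpha_2)$ cut out by \eqref{E:roots}, and of $\ell$. This is secured by the facts that within each chamber the invariant equals the transversal count with a single common sign, that the walls \eqref{E:roots} agree for $\pm\ell$, and that the count itself is unchanged, so one computation suffices to determine the sign once and for all. A more conceptual route, which I would record but not pursue here because it requires careful orientation bookkeeping, is to use the swap involution $s(X_1,X_2,Y_1,Y_2)=(Y_1,Y_2,X_1,X_2)$: it fixes $\widehat{\Lambda}_2^{\alpha,c}$ setwise, preserves $\widehat{H}_2^{\alpha,c}$, and carries $\widehat{\Gamma}_{\sigma_1^{2\ell}}^{\alpha,c}$ to $\widehat{\Gamma}_{\sigma_1^{-2\ell}}^{\alpha,c}$, so the entire sign relationship would follow uniformly in $\ell$ and $\alpha$ from how $s$ acts on the relevant orientations.
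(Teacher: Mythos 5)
Your proposal is correct and follows essentially the same route as the paper: the paper likewise notes that the entire $\ell>0$ analysis (representation count, Chebyshev parameterization, transversality and uniformity of signs) carries over to $\sigma_1^{-2\ell}$, and then fixes the global sign by redoing the single base-case computation at $\alpha=(\pi/2,\pi/2)$, obtaining the tangent vector $\psi_2=(-\i,0,3\i,-4\j)$ and a change-of-basis matrix of negative determinant. Your added remarks on well-definedness via the Alexander polynomial and on why one sign computation suffices are just more explicit versions of what the paper compresses into ``can be repeated word for word.''
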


\section{Multivariable signature for torus links}\label{sec:six}
The goal of this section is to calculate the Cimasoni--Florens signature of the $(2, 2 \ell)$--torus link $L_{\ell}$, together with the quantity $- 1/2\cdot (\sigma_{L_{\ell}} (\omega_1, \omega_2) + \sigma_{L_{\ell}}(\omega_1, \omega_2^{- 1}))$, and to finish the proof of \thref{thm:main-theorem-introduction}.


\subsection{The positive linking number}
We will start by computing the Cimasoni--Florens signature of the 2-colored link $L_{\ell}$ with $\ell > 0$. The signature $\sigma_{L_{\ell}}(\omega_1,\omega_2)$ in question is the signature of the matrix $H_{\ell}(\omega)\, =\, (1 - \overline{\omega}_1) (1 - \overline{\omega}_2)\,A_{\ell}(\omega_1, \omega_2)$, where 
\[
A_{\ell}(\omega_1, \omega_2) = A_{\ell}^{+, +} - \omega_1 A_{\ell}^{+, -} - \omega_2 A_{\ell}^{-, +} + \omega_1 \omega_2 A_{\ell}^{-, -}
\] 
as in formula \eqref{E:matrix H}. Using the C-complex shown in \autoref{fig:2-4-torus}, we can easily see that $A^{+, -}_{\ell}$ and $A^{-, +}_{\ell}$ are always $(\ell - 1) \times (\ell - 1)$ zero matrices; $A^{+, +}_{\ell}$ is the $(\ell - 1) \times (\ell - 1)$ matrix with $- 1$ on the main-diagonal and $1$ on the super-diagonal, and $A_{\ell}^{-, -} = (A^{+, +}_{\ell})^{\top}$. In particular, the matrices are empty in the case of $\ell = 1$. Thus, $H_{\ell}(\omega_1, \omega_2)$ is the $(\ell - 1) \times (\ell - 1)$ matrix with $(1 - \overline{\omega}_1) (1 - \overline{\omega}_2) (- 1 - \omega_1 \omega_2)$ on the main-diagonal, $(1 - \overline{\omega}_1) (1 - \overline{\omega}_2)$ on the super-diagonal, and $(1 - \overline{\omega}_1) (1 - \overline{\omega}_2)\, \omega_1 \omega_2$ on the sub-diagonal.

\begin{figure}[!ht]
\centering
\includegraphics[width=0.9\textwidth]{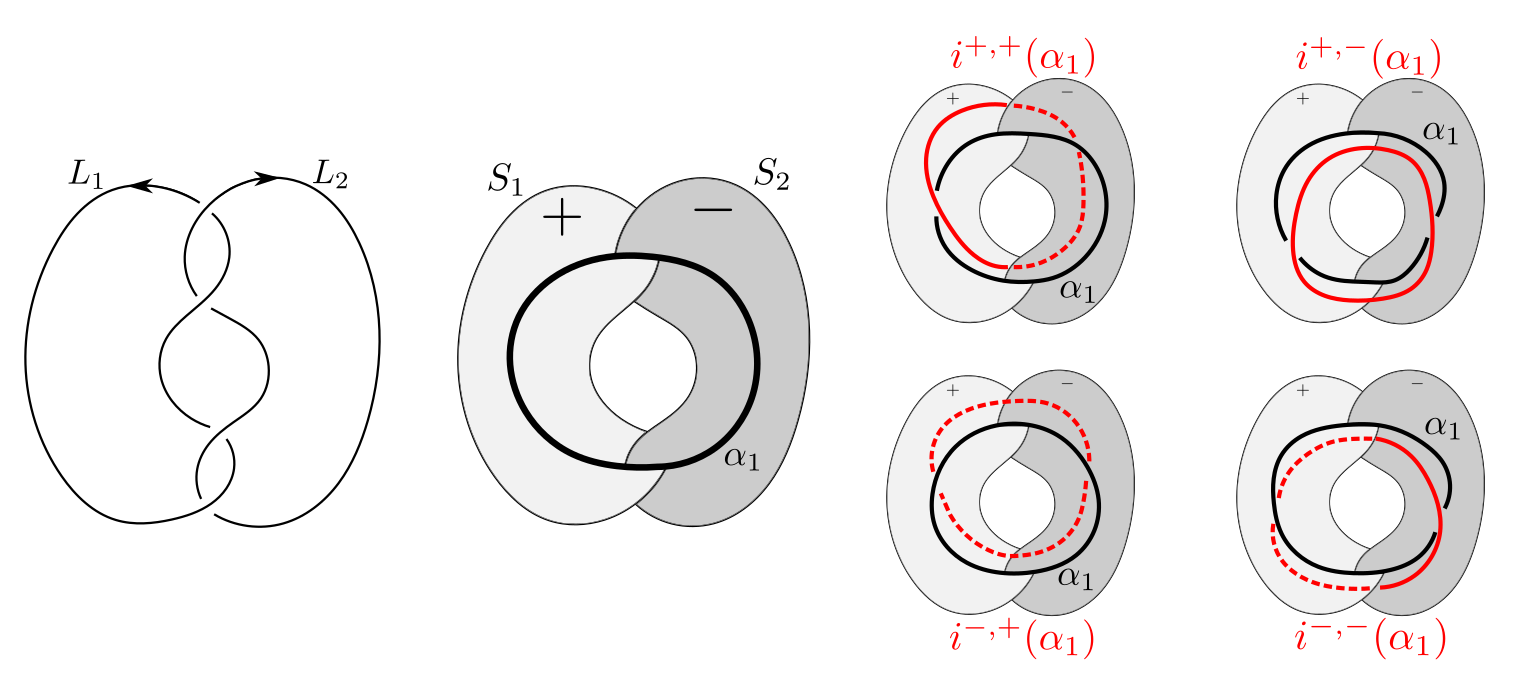}
\caption{A C-complex and push offs for the $(2, 4)$--torus link. All loops are oriented counter-clockwise}
\label{fig:2-4-torus}
\end{figure}

\begin{example}
A C-complex $S$ for the $(2, 4)$--torus link $L_2$ and the curves $i^{\epsilon}$ are shown in \autoref{fig:2-4-torus}. The complex $S$ is homotopy equivalent to a circle, with $H_1(S) = \mathbb{Z}$ generated by $[\alpha_1]$. Thus, for each $\epsilon = (\epsilon_1, \epsilon_2)$, the associated Seifert matrix $A_2^{\epsilon}$ is the $1 \times 1$ matrix $\big(\lk(i^{\epsilon}(\alpha_1), \alpha_1)\big)$. A direct counting gives that $A_2^{+, +} = A_2^{-, -} = \big(-1\big)$, $A_2^{+, -} = A_2^{-, +} = \big(0\big)$, and thus $H_2(\omega)$ is the $1 \times 1$ matrix $\big (1 - \overline{\omega}_1) (1 - \overline{\omega}_2) (- 1 - \omega_1\omega_2) \big)$. In particular, $\sigma_{L_2}(\omega) = \pm 1$ or $0$ depending on the sign of the real number $(1 - \overline{\omega}_1) (1 - \overline{\omega}_2) (- 1 - \omega_1\omega_2)$.
\end{example}

\begin{figure}[!ht]
\centering
\includegraphics[width=0.4\textwidth]{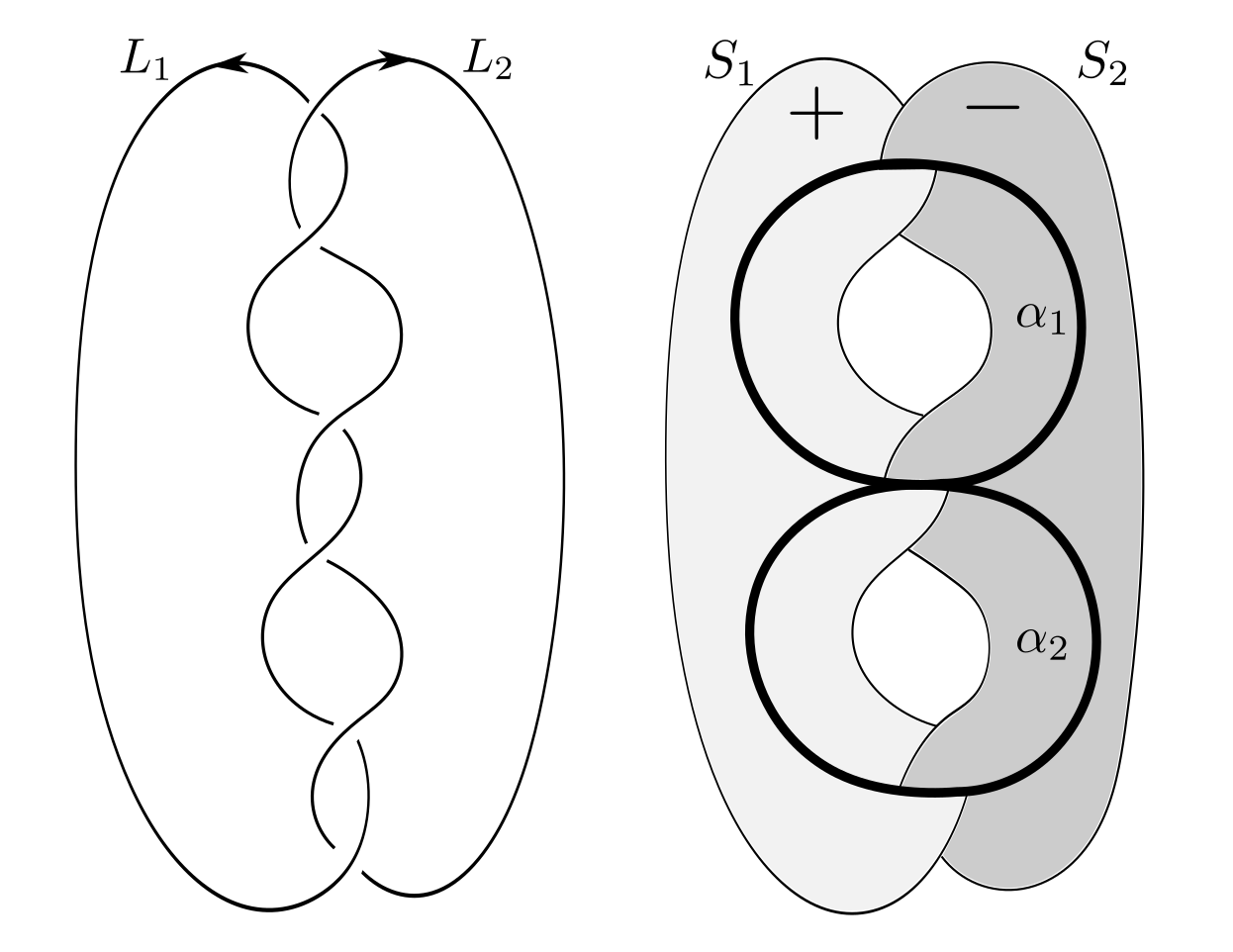}
\caption{The $(2, 6)$-torus link}
\label{fig:2-6-torus}
\end{figure}
		
\begin{example}
A C-complex $S$ of the $(2, 6)$-torus link $L_3$ is shown in \autoref{fig:2-6-torus}. The complex $S$ is homotopy equivalent to the one--point union $S^1 \vee S^1$, with $[\alpha_1]$ and $[\alpha_2]$ forming a basis of $H_1(S)$. By direct calculation,
\[
A_3^{+,+} = (A_3^{-, -})^{\top} = \left (\begin{array}{rr}
- 1 & 1 \\
0 & - 1
\end{array}\right) \quad\text{and}\quad A_3^{+, -} = A_3^{-, +} = \left( \begin{array}{rr}
\; 0 & \; 0 \\
0 & 0
\end{array}\right).
\]
Therefore,
\[
H_3(\omega) = \begin{pmatrix}
(1 - \overline{\omega}_1) (1 - \overline{\omega}_2) (- 1 - \omega_1 \omega_2) & (1 - \overline{\omega}_1) (1 - \overline{\omega}_2) \\
(1 - \omega_1) (1 - \omega_2) & (1 - \overline{\omega}_1) (1 - \overline{\omega}_2) (- 1 - \omega_1 \omega_2)
\end{pmatrix}.
\]
Notice that the top-left $(1 \times 1)$-minor of $H_3(\omega)$ is precisely $H_2(\omega)$.
\end{example}
		
We will now move on to the general case. For any $\ell > 0$ let $\delta_{\ell} = \det(H_{\ell}(\omega_1, \omega_2))$ so that $\delta_1 = 1$, the determinant of empty matrix, $\delta_2 = (1 - \overline{\omega}_1) (1 - \overline{\omega}_2) (- 1 - \omega_1 \omega_2)$ etc. In general, we have a recursive relation
\[
\delta_{m + 1} = (1 - \overline{\omega}_1) (1 - \overline{\omega}_2) (- 1 - \omega_1\omega_2)\,\delta_{m} - (1 - \overline{\omega}_1) (1 - \overline{\omega}_2) (1 - \omega_1) (1 - \omega_2)\,\delta_{m - 1}.
\]
Since $\omega_1, \omega_2 \in S^1 \setminus \lbrace 1 \rbrace$, we can write $\omega_1 = e^{2 \i \alpha_1}, \omega_2 = e^{2 \i \alpha_2}$ with $\alpha_1, \alpha_2 \in (0, \pi)$ and re-write the above formulas in trigonometric form so that $\delta_1 = 1$, $\delta_2 = 8 \sin(\alpha_1) \sin(\alpha_2) \cos(\alpha_1 + \alpha_2)$, plus the recursive relation
\[
\delta_{m + 1} = 8 \sin(\alpha_1) \sin(\alpha_2) \cos(\alpha_1 + \alpha_2)\, \delta_m - 16 \sin^2(\alpha_1) \sin^2(\alpha_2)\, \delta_{m - 1}.
\]
This recursive relation has a unique solution 
\begin{equation}\label{E:delta}
\delta_{m + 1} = 4^m \sin^m(\alpha_1) \sin^m(\alpha_2)\, U_m(\cos(\alpha_1 + \alpha_2)),
\end{equation}
where $U_m(x)$ is the $m$--th Chebyshev polynomial of the second kind, $U_m (\cos(\alpha_1 + \alpha_2)) \cdot \sin(\alpha_1 + \alpha_2) = \sin ( (m + 1) (\alpha_1 + \alpha_2))$. A direct calculation now shows that the signs of $\delta_{m + 1}$ are given by the formula
\[
\begin{aligned}
 \sign & (\delta_{m + 1})  = \sign(U_m(\cos(\alpha_1 + \alpha_2))) = \\
& \begin{cases}
\;\;\, 1 , & \alpha_1 + \alpha_2 \in \displaystyle\bigcup\limits_{0 \le i \le m, \, i \text{ even}}\left(\dfrac{i \pi}{m + 1}, \dfrac{(i + 1) \pi}{m + 1}\right) \textstyle{\bigcup} \left(2 \pi- \dfrac{(i + 1) \pi}{m + 1}, 2 \pi - \dfrac{i \pi}{m + 1}\right) \\
& \text{or } \alpha_1 + \alpha_2 = \pi \text{ and } m \text{ is even}, \\
\;\;\, 0, & \alpha_1 + \alpha_2 = \dfrac{j \pi}{m + 1},\; 1 \le j \le 2 m + 1,\; j \ne m + 1, \\
- 1, & \text{otherwise. }
\end{cases}
\end{aligned}
\]
			
\noindent
Since the top-left $m \times m$ minor of $H_{\ell}(\omega_1, \omega_2)$ for $0 < m < \ell$ is precisely $H_{m + 1}(\omega_1, \omega_2)$, we can use the Sylvester criterion to compute the signature by the formula
\[
\sigma_{L_{\ell}}(\omega_1, \omega_2) = \sign(H_{\ell}(\omega_1, \omega_2)) = \sign(\delta_2) + \sum_{i = 2}^{\ell - 1}\; \sign(\delta_i \cdot \delta_{i + 1}),
\] 
which is equivalent to the formula 
\[
\sigma_{L_{\ell}}(\omega_1, \omega_2) = \begin{cases}
\ell - 2 i - 1, & \dfrac{i \pi}{\ell} < \alpha_1 + \alpha_2 < \dfrac{(i + 1) \pi}{\ell}\; \text{ and }\; 0 \le\, i\, \le \ell - 1, \\
- 3 \ell + 2 i + 1, & \dfrac{i \pi}{\ell} < \alpha_1 + \alpha_2 <\dfrac{(i + 1) \pi}{\ell}\; \text{ and }\; {\ell} \le i \le 2 \ell - 1;
\end{cases}
\] 
see \autoref{fig:sign-signature} for a graphic depiction.

\begin{figure}[!ht]
\centering
\includegraphics[width=0.6\textwidth]{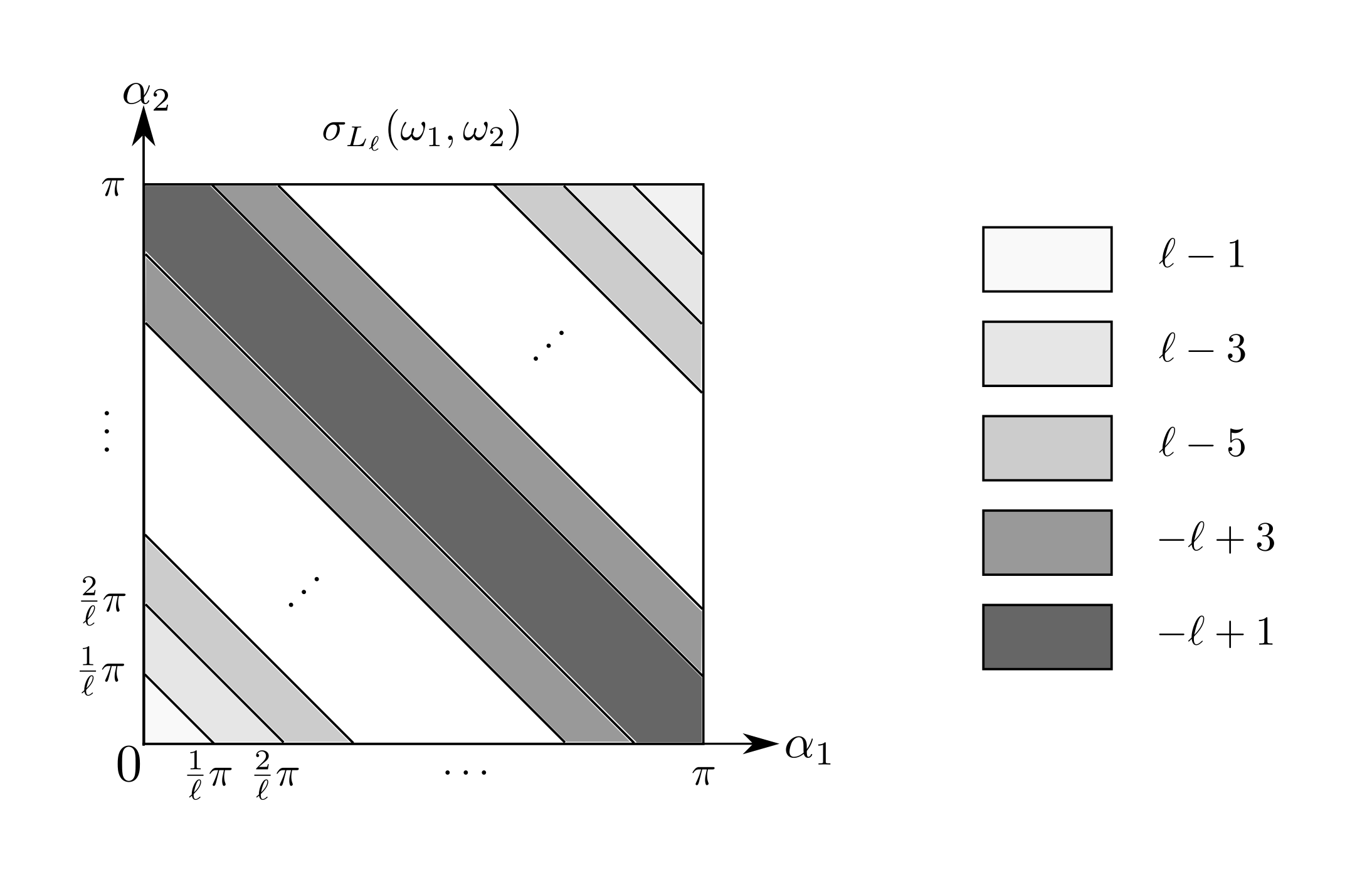}
\caption{Signature function $\sigma_{\ell} (\omega_1,\omega_2)$}
\label{fig:sign-signature}
\end{figure}
		
The quantity $\sigma_{L_{\ell}}(\omega_1, \omega_2^{- 1})$ is obtained from $\sigma_{L_{\ell}}(\omega_1, \omega_2)$ by a flip across the axis $\alpha_2 = \pi/2$, and the average of the two is then easily calculated. Comparing the answer with the calculation of Section \ref{sec:count-rep} and Corollary \ref{cor:representation-invariant}, we conclude that
\begin{equation}\label{E:formula}
h_{L_{\ell}}(\alpha) = h_{L_{\ell}}(\alpha_1, \alpha_2) = - \frac{1}{2}\,\big(\sigma_{L_{\ell}}(\omega_1, \omega_2) + \sigma_{L_{\ell}}(\omega_1, \omega_2^{- 1})\big).
\end{equation}

		
\subsection{The negative linking number}
For any positive integer $\ell$ and $\delta_{- \ell} = \det(H_{- \ell}(\omega_1, \omega_2))$, we calculate $\delta_{- 1} = 1$, $\delta_{- 2} = 8 \sin(\alpha_1) \sin(\alpha_2) \cos(\pi + \alpha_1 + \alpha_2)$, and we obtain the recursive relation
\[
\delta_{- m - 1} = 8 \sin(\alpha_1) \sin(\alpha_2) \cos(\pi + \alpha_1 + \alpha_2)\, \delta_{- m} - 16 \sin^2(\alpha_1) \sin^2(\alpha_2)\, \delta_{- m + 1}
\]
which has the unique solution
\[
\delta_{- m - 1} = 4^m \sin^m(\alpha_1) \sin^m(\alpha_2)\, U_m(\cos(\pi + \alpha_1 + \alpha_2)),
\]
which is obtained by a shift by $\pi$ of the formula \eqref{E:delta}. In the application of the Sylvester criterion, each of the summands flips the sign compared to the positive linking number case, that is, $\sign(\delta_{- 2}) = - \sign(\delta_2)$ and $\sign(\delta_{- i} \cdot \delta_{- i - 1}) = - \sign(\delta_i \cdot \delta_{i + 1})$ for each $i$ from $1$ to $\ell - 1$. Thus we obtain
\[
\sigma_{L_{- \ell}}(\omega_1, \omega_2) = - \sigma_{L_{\ell}}(\omega_1, \omega_2).
\]
Recall that $h_{L_{- \ell}}(\alpha) = - h_{L_{\ell}}(\alpha)$, see Corollary \ref{C:h-sign}. Therefore, the formula \eqref{E:formula} at the end of the last subsection extends to the following result. 
		
\begin{theorem}\thlabel{thm:base-step}
Let $L_{\ell}$ be the $(2, 2\ell)$--torus link with $\ell \neq 0$ and, for any choice of $(\alpha_1, \alpha_2) \in (0, \pi)^2$, let $(\omega_1, \omega_2) = (e^{2 \i \alpha_1},e^{2 \i \alpha_2})$. If the multivariable Alexander polynomial of $L_{\ell}$ satisfies $\Delta_{L_{\ell}} (\omega_1^{\epsilon_1}, \omega_2^{\epsilon_2}) \ne 0$ for all possible $\epsilon_1, \epsilon_2 = \pm 1$, then
\[
h_{L_{\ell}}(\alpha_1, \alpha_2) = - \frac{1}{2}\, (\sigma_{L_{\ell}}(\omega_1, \omega_2) + \sigma_{L_{\ell}}(\omega_1, \omega_2^{- 1})).
\]
\end{theorem}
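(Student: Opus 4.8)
The plan is to treat the two signs of $\ell$ separately, reducing everything to the two explicit computations carried out earlier in this section and in Section \ref{sec:count-rep}. For $\ell > 0$ the strategy is to show that the two sides of the asserted identity agree as functions of $(\alpha_1,\alpha_2)$ on the complement of the root set \eqref{E:roots}, by comparing the representation count of Corollary \ref{cor:representation-invariant} with the averaged Cimasoni--Florens signature. For $\ell < 0$ I would deduce the statement from the $\ell > 0$ case using the sign-reversal relations already established, so that no new geometric input is needed.

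First, for $\ell > 0$, recall from Corollary \ref{cor:representation-invariant} that $h_{L_{\ell}}(\alpha)$ equals the number of integers $m \in \{1,\dots,\ell-1\}$ with $\cos(\pi m/\ell) \in [\cos(\alpha_1+\alpha_2),\cos(\alpha_1-\alpha_2)]$, while the explicit formula for $\sigma_{L_{\ell}}(\omega_1,\omega_2)$ derived above exhibits it as a piecewise-constant function of $\alpha_1+\alpha_2$ alone, equal to $\ell-1$ on $(0,\pi/\ell)$ and dropping by $2$ each time $\alpha_1+\alpha_2$ crosses one of the values $\pi m/\ell$. Writing $g(s)$ for the number of these values below $s$, so that the signature, folded into $[0,\pi]$, becomes $\ell-1-2g$, I would then substitute $\omega_2 \mapsto \omega_2^{-1}$, i.e. $\alpha_2 \mapsto \pi-\alpha_2$, to express $\sigma_{L_{\ell}}(\omega_1,\omega_2^{-1})$ as the same function evaluated at $\pi+\alpha_1-\alpha_2$. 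The key point is the symmetry of the set $\{\pi m/\ell\}$ about $\pi/2$, which gives $g(\pi-t)=(\ell-1)-g(t)$; using it, the two signature contributions combine so that their average collapses to a difference $g(c_+)-g(c_-)$ of counting functions evaluated at the folded angles $c_\pm = \arccos(\cos(\alpha_1\pm\alpha_2))$. Recognizing this difference as exactly the count in Corollary \ref{cor:representation-invariant} yields formula \eqref{E:formula} for all admissible $(\alpha_1,\alpha_2)$.

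With the positive case in hand, the negative case follows formally. For $\ell>0$ I would rewrite the left-hand side for $L_{-\ell}$ using $h_{L_{-\ell}}(\alpha) = -h_{L_{\ell}}(\alpha)$ from Corollary \ref{C:h-sign}, and the right-hand side using $\sigma_{L_{-\ell}}(\omega_1,\omega_2) = -\sigma_{L_{\ell}}(\omega_1,\omega_2)$ together with the same relation for $\omega_2^{-1}$, both of which come from the negative-linking computation of the preceding subsection. The two sign reversals are compatible: they convert formula \eqref{E:formula} for $L_{\ell}$ verbatim into the desired identity for $L_{-\ell}$, completing the theorem.

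I expect the main obstacle to be the bookkeeping in the positive case: one must fold $\alpha_1+\alpha_2$ (which can exceed $\pi$) and $\alpha_1-\alpha_2$ (which can be negative) into $[0,\pi]$ correctly, verify the ordering $c_- < c_+$ that makes the counting difference nonnegative, and check that the hypothesis $\Delta_{L_{\ell}}(\omega_1^{\epsilon_1},\omega_2^{\epsilon_2}) \ne 0$ excludes precisely the degenerate values where some $\pi m/\ell$ coincides with an endpoint $c_\pm$, so that all inequalities are strict and the count is unambiguous. Once these folding conventions are pinned down, the relation $g(\pi-t)=(\ell-1)-g(t)$ renders the averaging step essentially automatic.
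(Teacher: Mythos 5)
Your proposal is correct and follows essentially the same route as the paper: the positive case is the comparison of Corollary \ref{cor:representation-invariant} with the explicit piecewise formula for $\sigma_{L_{\ell}}$ (the paper phrases your $g(\pi-t)=(\ell-1)-g(t)$ symmetry as "a flip across the axis $\alpha_2=\pi/2$" followed by averaging), and the negative case is deduced formally from Corollary \ref{C:h-sign} together with $\sigma_{L_{-\ell}}=-\sigma_{L_{\ell}}$. Your explicit folding bookkeeping correctly reproduces the identification $-\tfrac12(\sigma_{L_{\ell}}(\omega_1,\omega_2)+\sigma_{L_{\ell}}(\omega_1,\omega_2^{-1}))=g(c_+)-g(c_-)$ with the representation count.
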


	
\section{Proof of Theorems \ref{thm:main-theorem-introduction} and \ref{thm:second-theorem-introduction}}\label{sec:seven}		
According to \cite{Milnor1}, the linking number is a complete invariant of link homotopy for two-component links. Therefore, any two-component oriented link with linking number $\ell \ne 0$ can be obtained from the $(2, 2 \ell)$--torus link via a sequence of crossing changes within individual components. According to \thref{thm:base-step}, the equality \eqref{E:main} holds for all $(2, 2 \ell)$--torus links with $\ell \neq 0$, and according to \thref{thm:inductive-step}, the equality \eqref{E:main} remains true with each crossing change. Theorem \ref{thm:main-theorem-introduction} now follows.

In order to prove Theorem \ref{thm:second-theorem-introduction}, we will use the following formula of Cimasoni and Florens \cite[Proposition 2.8]{Cimasoni-Florens}, 
\[
\sigma_{L_1\,\cup\, L_2} \left(\, \omega_1, \omega_2^{-1} \right)\; = \; \sigma_{L_1\, \cup\, -L_2}\left(\, \omega_1, \omega_2 \right),
\]
where $-L_2$ stands for the component $L_2$ with reversed orientation. Together with the formula
\[
\sigma_{L_1\,\cup\, L_2} \left(\, \omega_1^{-1}, \omega_2^{-1} \right)\; = \; \sigma_{L_1\, \cup\, L_2}\big(\, \omega_1, \omega_2 \big),
\]
which easily follows from the definition of the Cimasoni--Florens signature, it implies that
\[
h_L (\alpha_1, \alpha_2) \; = \; - \frac{1}{2}\, \big(\sigma_{L_1\,\cup\,L_2}\, (\,\omega_1, \omega_2) + \sigma_{L_1\,\cup\,-L_2} (\omega_1, \omega_2) \big)
\]
is independent of the choice of orientation on the link $L$. If $\omega_1 = \omega_2 = \omega$, one can use the following formula of \cite[Proposition 2.5]{Cimasoni-Florens},
\[
\sigma_{L_1\,\cup\,L_2}\, (\omega)\; = \; \sigma_{L_1\,\cup\,L_2}\, (\omega,\omega) - \lk (L_1,L_2),
\]
relating the multivariable signature $\sigma_L (\omega,\omega)$ with the Levine--Tristram signature $\sigma_L (\omega)$, to obtain
\begin{align*}
h_L (\alpha, \alpha)\; & = \; - \frac{1}{2}\, \big(\sigma_{L_1\,\cup\,L_2}\, (\,\omega, \omega) + \sigma_{L_1\,\cup\,-L_2}\, (\omega, \omega) \big) \\
& = \; - \frac{1}{2}\, \big(\sigma_{L_1\,\cup\,L_2}\, (\,\omega) + \lk (L_1,L_2) + \sigma_{L_1\,\cup\,-L_2}\, (\omega) + \lk (L_1,-L_2) \big) \\
& = \; - \frac{1}{2}\, \big(\sigma_{L_1\,\cup\,L_2}\, (\,\omega) + \sigma_{L_1\,\cup\,-L_2}\, (\omega) \big).
\end{align*}
In the special case of $\omega = -1$, this implies that $h_L (\pi/2, \pi/2)$ equals minus the Murasugi signature of $L$. 

\begin{remark}
There does not appear to exist a name in the literature for the quantity $1/2\, \big(\sigma_{L_1\,\cup\,L_2}\, (\,\omega) + \sigma_{L_1\,\cup\,-L_2}\, (\omega) \big)$ nor for a similar quantity for links with more than two components when $\omega \ne -1$. Perhaps it should be called the equivariant Murasugi signature. 
\end{remark}

\end{sloppypar}


\begin{thebibliography}{999}


\bibitem{Benard-Conway}
L.~B{\'e}nard, A.~Conway,
\emph{A multivariable Casson--Lin type invariant}, Ann. Inst. Fourier (Grenoble) \textbf{70} (2020), 1029--1084


\bibitem{Boden-Harper}
H.~Boden, E.~Harper,
\emph{The SU(N) Casson--Lin invariants for links}, Pacific J. Math. \textbf{285}~(2016), 257--282

\bibitem{Boden-Herald}
H. Boden, C. Herald, \emph{The SU(2) Casson-Lin invariant of the Hopf link}, Pacific J. Math. \textbf{285}~ (2016), 283--288

\bibitem{Cimasoni}
D.~Cimasoni,
\emph{A geometric construction of the Conway potential function}, Comment. Math. Helv. \textbf{79} (2004), 124--146

\bibitem{Cimasoni-Florens}
D.~Cimasoni, V.~Florens, 
\emph{Generalzied Seifert surfaces and signatures of colored links}, Trans. Amer. Math. Soc. \textbf{360} (2008), 1223--1264

\bibitem{Daemi-Scaduto}
A.~Daemi, C.~Scaduto,
\emph{Equivariant aspects of singular instanton Floer homology}, Geometry Topol. (to appear). Preprint arXiv:1912.08982

\bibitem{Daemi-Scaduto-2}
A.~Daemi, C.~Scaduto,
\emph{Unoriented skein exact triangles in equivariant singular instanton Floer theory}. Preprint arXiv:2409.16390

\bibitem{Harper-Saveliev}
E.~Harper, N.~Saveliev,
\emph{A Casson--Lin type invariant for links}, Pacific J. Math. \textbf{248}~(2010), 139--154

\bibitem{Herald}
C.~Herald,
\emph{Flat connections, the Alexander invariant, and Casson's invariant}, Comm. Anal. Geom. \textbf{5} (1997), 93--120

\bibitem{Heusener-Kroll}
M.~Heusener, J.~Kroll,
\emph{Deforming abelian $SU(2)$-representations of knot groups}, Comment. Math. Helv. \textbf{73} (1998), 480--498

\bibitem{Kronheimer-Mrowka}
P.~Kronheimer, T.~Mrowka,
\emph{Khovanov homology is an unknot-detector}, Publ. Math. Inst. Hautes {\'E}tudes Sci. \textbf{113} (2011), 97--208

\bibitem{Lin}       
X.--S.~Lin, \emph{A knot invariant via representations spaces},
J. Differential Geom. \textbf{35} (1992), 337--357


\bibitem{Milnor1}
J. Milnor, \emph{Link groups}, Ann. of Math. \textbf{59}~(1954), 177--195

\bibitem{Milnor2}
J. Milnor, Singular Points of Complex Hypersurfaces. Princeton University Press, 1968

\bibitem{Murakami}
J.~Murakami, \emph{A state model for the multivariable Alexander polynomial}, Pacific J. Math. \textbf{157} (1993), 109--135

\bibitem{Murasugi}
K. Murasugi, \emph{On the signature of links}, Topology \textbf{9} (1970), 283--298

\bibitem{Torres}
G.~Torres, \emph{On the Alexander polynomial}, Ann. of Math. \textbf{57}~(1953), 57--89

\end{thebibliography}

\end{document}